\newcommand{\Om}{\Omega}
\newtheorem{Alemma}{Lemma}[section]
\newtheorem{Atheorem}[Alemma]{Theorem}
\newtheorem{Acorollary}[Alemma]{Corollary}
\newtheorem{Aproposition}[Alemma]{Proposition}
\newtheorem{Aremark}[Alemma]{Remark}
\newtheorem{Adefinition}[Alemma]{Definition}
\newtheorem{assumption}{Assumption}
\def\Xint#1{\mathchoice
{\XXint\displaystyle\textstyle{#1}}%
{\XXint\textstyle\scriptstyle{#1}}%
{\XXint\scriptstyle\scriptscriptstyle{#1}}%
{\XXint\scriptscriptstyle\scriptscriptstyle{#1}}%
\!\int}
\def\XXint#1#2#3{{\setbox0=\hbox{$#1{#2#3}{\int}$ }
\vcenter{\hbox{$#2#3$ }}\kern-.6\wd0}}
\def\dashint{\Xint-}
\def\eps{\varepsilon}
\newcommand{\ou}{{\bar u}}
\newcommand{\R}{{\mathbb R}}
\newcommand{\Per}{{\rm Per}}
\begin{document} 

\title{Front propagation in geometric and phase field models of
  stratified media}

\author{A. Cesaroni\footnote{Dipartimento di Matematica, Universit\`a
    di Padova, Via Trieste 63, 35121 Padova, Italy, email:
    acesar@math.unipd.it} \and C. B. Muratov\footnote{Department of
    Mathematical Sciences, New Jersey Institute of Technology, Newark,
    NJ 07102, USA, email: muratov@njit.edu} \and
  M. Novaga\footnote{Dipartimento di Matematica, Universit\`a di Pisa,
    Largo Bruno Pontecorvo 5, 56127 Pisa, Italy, email:
    novaga@dm.unipi.it} }

\maketitle

\begin{abstract}
  We study front propagation problems for forced mean curvature flows
  and their phase field variants that take place in stratified media,
  i.e., heterogeneous media whose characteristics do not vary in one
  direction.  We consider phase change fronts in infinite cylinders
  whose axis coincides with the symmetry axis of the medium. Using the
  recently developed variational approaches, we provide a convergence
  result relating asymptotic in time front propagation in the diffuse
  interface case to that in the sharp interface case, for suitably
  balanced nonlinearities of Allen-Cahn type. The result is
  established by using arguments in the spirit of
  $\Gamma$-convergence, to obtain a correspondence between the
  minimizers of an exponentially weighted Ginzburg-Landau type
  functional and the minimizers of an exponentially weighted area type
  functional. These minimizers yield the fastest traveling waves
  invading a given stable equilibrium in the respective models and
  determine the asymptotic propagation speeds for front-like initial
  data. We further show that generically these fronts are the
  exponentially stable global attractors for this kind of initial data
  and give sufficient conditions under which complete phase change
  occurs via the formation of the considered fronts.
\end{abstract}

\newpage

\tableofcontents

\section{Introduction}  

Front propagation is a phenomenon ubiquitous to nonlinear systems
governed by reaction-diffusion mechanisms and their analogs, and arises
in many applications, including phase transitions, combustion,
chemical reactions, population dynamics, developmental biology,
etc. There is now a huge literature on the subject dealing with
various aspects of front propagation, from existence of traveling wave
solutions, long time asymptotic behavior, various singular limits in
the presence of small parameters to generalized notions of fronts and
the effects of advection, randomness or stochasticity (see, e.g., the
review in \cite{xin} and references therein).  By a {\em front}, one
usually understands a narrow transition region (interface) in which
the solution of the underlying reaction-diffusion equation changes
abruptly between two equilibria. At the core of the phenomenon of
propagation is the fact that such fronts may exhibit {\em wave-like}
long-time behavior, whereby the level sets of the solution advance in
space with some positive average velocity. This geometric aspect of
the problem also leads to an alternative modeling viewpoint, whereby
fronts are regarded as infinitesimally thin, i.e., as hypersurfaces
whose motion is governed by a geometric evolution law. In the context
of phase field models considered in this paper (see below) the
connection between the {\em diffuse} and {\em sharp} fronts in the
respective diffuse interface and sharp interface models has been the
subject of many studies
\cite{fife,dms,chen,ilmanen93,ess,bss,barles,mugnai11,alfaro12,almat}, starting with the early works
\cite{fife:jcp76,fifemcl,allen79,caginalp86ann,caginalpfife,rubinstein89} 
(these lists of
references are, of course, far from being exhaustive).
  
As a prototypical model, consider the following version of the
Allen-Cahn equation in the presence of a heterogeneous forcing term:
\begin{align}
  \label{rd00}
  \phi_t = \Delta \phi + f(\phi)+ \eps g(\eps x).
\end{align}
Here $\phi = \phi(x, t) \in \R$ is a variable that depends on the
spatial coordinate $x \in \R^n$ and time $t \geq 0$,
$f(\phi)=\phi(1-\phi)(\phi-\frac{1}{2})$ is a balanced bistable
nonlinearity with $\phi = 0$ and $\phi = 1$ being stable equilibria
and $\phi = \frac{1}{2}$ an unstable equilibrium, $g(x)$ is some
sufficiently regular periodic function and $\eps > 0$ is a
parameter.  Such an equation may arise, e.g., in modeling the dynamics
of two co-existing phases in a phase transition with non-conserved
order parameter in a medium with periodically varying properties. When
$\eps \ll 1$, the variations of the properties are weak and slowly
changing in space.  It is then easy to show that in this regime there
exist two uniquely defined equilibrium states (periodic with the same
period as $g$), $v_0$ and $v_1$, with the properties:
\begin{equation}
  \label{vpm}
  v_0(x) = 2 \eps g(\eps x) + O(\eps^2)\qquad v_1
  = 1 + 2 \eps g(\eps x) + O(\eps^2).
\end{equation}
These correspond to the perturbations of the two coexisting phases
$\phi = 0$ and $\phi = 1$, respectively, in the homogeneous Allen-Cahn
equation.  Now define $u = \phi - v_0$. It solves
\begin{equation}
  \label{rd0}
  u_t = \Delta u +f(u) + \eps a(\eps x,u),
\end{equation}
where $a(\eps x,u) = 6 g(\eps x)(u-u^2)+O(\eps)$. This type of
equation for $\eps \ll 1$ and its solutions that invade the $u = 0$
equilibrium will be the main subject of this paper.

On formal asymptotic grounds
\cite{fife,caginalp86ann,rubinstein89,bss,alfaro12,almat}, the dynamics
governed by \eqref{rd0} with some fixed initial condition is expected
to converge as $\eps \to 0$, after rescaling space and time as
\begin{align}
  \label{xt}
  x \to \frac{x}{\eps}, \qquad t \to \frac{t}{\eps^{2}} \,,
\end{align}
to a forced mean curvature flow. More precisely, for each $(x,t)$
fixed the function $u^\eps(x,t) = u(\eps^{-1} x, \eps^{-2}
t)$, where $u(x,t)$ solves \eqref{rd0}, is expected to converge to
either 0 or 1 everywhere except for an $(n-1)$-dimensional evolving
hypersurface $\Gamma(t) \subset \R^n$ separating the regions where $u
= 0$ and $u = 1$ in the limit and whose equation of motion reads
\begin{align}
  \label{mcf}
  V(x) = \frac{g(x)}{c_W} - \kappa(x),
\end{align}
where we used the fact that $g(x) = \lim_{\eps \to 0} \int_0^1
a(x,u)du$.  Here $V(x)$ is the velocity in the direction of the
outward normal (i.e., pointing into the region where $u =0$ in the
limit) at a given point $x \in \Gamma(t)$, $\kappa$ is the sum of the
principal curvatures (positive if the limit set where $u = 1$ is
convex), and
\begin{align}
  \label{aW}
  c_W := \int_0^1 \sqrt{2 W(u)} \, du , \qquad W(u) := -\int_0^u f(s)
  \, ds,
\end{align}
where we defined the double-well potential $W$, associated with $f$,
which is nonnegative and whose only zeros are $u = 0$ and $u =
1$. In view of \eqref{vpm}, the same result would then hold for
$\phi^\eps(x, t) = \phi(\eps^{-1} x, \eps^{-2} t)$, where
$\phi(x,t)$ solves \eqref{rd00}. For well-prepared initial data such a
result was rigorously established by Barles, Soner and Souganidis,
interpreting \eqref{mcf} in the viscosity sense \cite{bss} (for
related results on unforced mean curvature flows see
\cite{chen,dms,ess,ilmanen93}). The case of more general initial
conditions was also treated by Barles and Souganidis in
\cite{barles}. More recently, the problem above was treated within
the varifold formalism by Mugnai and R\"oger under weaker assumptions
on the forcing term and in dimensions two and three
\cite{mugnai11}. Rigorous leading order asymptotic formulas for
solutions of \eqref{rd00} in terms of solutions of \eqref{mcf} were
also recently provided by Alfaro and Matano \cite{alfaro12}.

Note that, since the above mentioned results are local in space and
time, they are not suitable for drawing conclusions about the behavior
as $t \to +\infty$ of solutions of \eqref{rd00} for $\eps \ll 1$, via
the analysis of \eqref{mcf}. Nevertheless, it is widely believed that
\eqref{mcf} should be able to provide information about the long-time
behavior of solutions of \eqref{rd00} for $\eps \ll 1$. For example,
in the context of the Allen-Cahn equation it is interesting to know
how fast the energetically more favored phase invades the
energetically less favored phase following a nucleation event. In the
homogenous setting (i.e., with $g(x) = \bar g > 0$) this would occur
via a radial front moving asymptotically with constant normal
velocity, consistent with \eqref{mcf}
\cite{aronson78,jones83,bronsard91}. In this paper we provide results
for this type of questions for a particular class of heterogeneities
in \eqref{rd00}.

We focus on reaction-diffusion equations and mean curvature flows in
infinite cylinders that describe the so-called {\em stratified
  media}. These are media that are fibered along the cylinder, i.e.,
those whose properties do not change along the cylinder axis, and this
property can be characterized by the dependence of the nonlinearity
for reaction-diffusion equations and of the forcing term for mean
curvature flow only on the transverse coordinate of the cylinder.  By
a cylinder (in the original, unscaled variables), we mean a set
$\Sigma_\eps = \Omega_\eps \times \R \subset \R^n$, where $\Omega_\eps
= \eps^{-1} \Omega$ and $\Omega \subset \R^{n-1}$ is either a bounded
domain with sufficiently smooth boundary or an $(n-1)$-dimensional
parallelogram with periodic boundary conditions, covering the case
discussed earlier in the presence of an axis of translational
symmetry. In the case of a general bounded domain $\Omega$, we also
supplement the problem with homogeneous Neumann boundary
conditions. Our main interest is to provide a convergence result to
relate the asymptotic characteristics of front propagation in the
diffuse interface case with those in the sharp interface case as $t
\to \infty$ when $\eps \ll 1$. More precisely, we wish to characterize
the asymptotic propagation speeds for fronts in $\Sigma_\eps$ invading
the $u = 0$ equilibrium in the case of \eqref{rd0} or, equivalently,
the $\phi = v_0$ equilibrium in the case of \eqref{rd00}, in terms of
uniformly translating graphs solving \eqref{mcf}. We also wish to
characterize the shape of the long time limit of the fronts for
\eqref{rd0} and their relation to those for \eqref{mcf} in the spirit
of $\Gamma$-convergence (as is done for stationary fronts in
\cite{momo}).

\paragraph*{Variational formulation.}

Our methods are essentially variational.  This stems from the basic
observation \cite{mu} (in the one-dimensional setting the idea goes
back to \cite{fifemcl}) that, when the nonlinearity \eqref{rd0} is
translationally invariant along the cylinder axis, the solution of
this equation in the reference frame moving with speed $c > 0$ along
the cylinder $\Sigma_\eps$ may be viewed as a gradient flow in
$L^2_c(\Sigma_\eps) := L^2(\Sigma_\eps; e^{cz} dx)$ generated by the
exponentially weighted Ginzburg-Landau type functional
\begin{align}
  \label{eq:Phic}
  \Phi_c(u) = \int_{\Sigma_\eps} e^{c z} \biggl( \frac12 |\nabla u|^2
  + V(u, y) \biggr) dx.
\end{align}
Here $x = (y, z) \in \Sigma_\eps$, with $y \in \Omega_\eps$ being the
transverse coordinate in the cylinder cross-section and $z \in \R$ the
coordinate along the cylinder axis in the direction of propagation,
and $V(u, y)$ is a suitably chosen potential (see Section
\ref{s:diffuse}).  In particular, traveling wave solutions of
\eqref{rd0} with speed $c$ that belong to the exponentially weighted
Sobolev space $H^1_c(\Sigma_\eps)$, i.e., the space consisting of all
functions in $L^2_c(\Sigma_\eps)$ with first derivatives in
$L^2_c(\Sigma_\eps)$, are fixed points of this gradient flow (see
\cite{lmn,mn1}). In the simplest case of \eqref{rd00} with the
considered cubic nonlinearity and spatially homogeneous forcing $g(x)
= \bar g > 0$, it follows from \cite{mn1} via analysis of \eqref{rd0}
and an explicit computation that for all $\bar g < \sqrt{3} / (36
\eps)$ there exists a unique value of $c^\dag > 0$ satisfying
\begin{align}
  \label{ACTW}
  c^\dag - \frac{8}{9} (c^\dag)^3 = 6 \sqrt{2} \eps \bar g,
\end{align}
and a profile $\bar u \in H^1_{c^\dag}(\Sigma_\eps)$ depending only on
$z$ such that $\bar u$ is the unique (up to translations) minimizer of
$\Phi_{c^\dagger}$ over its natural domain (see below). Furthermore,
by the results of \cite{mn3} the solution of the initial value problem
for \eqref{rd00} with the initial datum in the form of a sharp front:
$\phi(x, 0) = v_0$ for $z > h(y)$ and $\phi(x, 0) = v_1$ for $z \leq
h(y)$, with $h \in C(\overline{\Omega}_\eps)$, converges as $t \to
\infty$ exponentially fast to $v_0 + \bar u$ after a translation by
$R_\infty -c^\dagger t$ for some $R_\infty \in \mathbb R$. Thus, for
every $\eps > 0$ sufficiently small the solution approaches a flat
front perpendicular to the cylinder axis, which, after the rescaling
in \eqref{xt}, moves with the normal velocity
\begin{align}
  \label{VAC}
  V = 6 \sqrt{2} \bar g + O(\eps^2).
\end{align}
This is consistent with the plane wave solution of \eqref{mcf}, in
view of the fact that $c_W = 1 / (6 \sqrt{2})$ according to
\eqref{aW}. A flat front with speed $c^\dag_0 = 6 \sqrt{2} \bar g$ is
also the asymptotic solution of \eqref{mcf} with $g(x) = \bar g$ and
an initial condition in the form of a graph on $\Omega$
\cite{cn}. Furthermore, the corresponding function $\psi(y) = const$
minimizes, for $c = c^\dag_0$, the following exponentially weighted
area type functional
\begin{align}
  \label{Fcbar}
  F_c(\psi) = \int_\Omega e^{c \psi(y)} \left( c_W \sqrt{ 1 + |\nabla
      \psi(y)|^2} - \frac{g(y)}{c} \right) dy,
\end{align}
among all $\psi \in C^1(\overline\Omega)$.  Here $\psi$ defines the
graph $z = \psi(y)$ that represents the sharp interface front.  Note
that the functional $F_c$ has a well-known geometric characterization,
which, however, requires some care \cite{giustibook}.  Let us
introduce the following exponentially weighted perimeter of a set
$S\subseteq \Sigma$:
\begin{align} 
  \label{perc} 
  \Per_c(S,\Sigma) := \sup \Bigg\{ \int_S e^{c z}\left(\nabla \cdot
    \phi + c \hat z \cdot \phi \right) dx : \phi \in
  C^1_c(\Sigma;\R^n), \ |\phi| \le 1 \Bigg\},
\end{align}  
where $\hat z$ denotes the unit vector pointing in the $z$-direction.
Notice that, if the set $S$ has locally finite perimeter, we can write
\begin{align}
  \label{percc}
  \Per_c(S,\Sigma) = \int_{\partial^*S\cap \Sigma} e^{cz}\,d\mathcal
  H^{n-1}(x), 
\end{align}
where $\partial^* S$ denotes the reduced boundary of $S$
\cite{afp,cn}. 

We then define the following geometric functional on measurable sets
$S\subset \Sigma$ with weighted volume $\int_S e^{cz} dx <
+\infty$:
\begin{equation}\label{geo}
  \mathcal{F}_{ c}(S) := c_W \, \Per_{c}(S,\Sigma)-\int_S e^{cz} \,
  g(y) \, dx.  
\end{equation} 
By our assumptions, this functional is indeed well defined for all
such sets.  Since the functionals in \eqref{Fcbar} and \eqref{geo}
agree whenever $S = \{ z < \psi(y) \}$ and $\psi$ is sufficiently
smooth \cite{cn}, in the example in which $g(x) = \bar g > 0$ the sets
$\{z < const\}$ minimize $\mathcal{F}_{ c}$ for $c = c_0^\dag$ over
all such sets. In fact, it is easy to see that they also minimize
$\mathcal F_c$ over its entire domain (see Sec. \ref{sec:tws}).

\paragraph*{Problem formulation and main results.}

The purpose of this paper is to study the long-time behavior of
solutions of \eqref{rd00} or \eqref{rd0} for $\eps \ll 1$ via the
analysis of traveling wave solutions to \eqref{mcf}. In particular we
characterize the asymptotic propagation speed and the shape of the
long time limit of fronts invading the $u = 0$ equilibrium in the case
of \eqref{rd0} or, equivalently, the $\phi = v_0$ equilibrium in the
case of \eqref{rd00}, in terms of uniformly translating graphs solving
\eqref{mcf}.

Throughout the paper we always assume that $\Omega$ is a bounded
domain with a sufficiently smooth boundary (for precise assumptions
see Section \ref{standass}).
All the results obtained in this paper remain valid in the periodic
setting, so we do not treat this case separately. We set $\Sigma :=
\Omega \times \R$, and in $\Sigma$ we consider the family of
singularly perturbed reaction-diffusion equations for $u = u(x, t) \in
\mathbb R$, with parameter $\eps>0$ and the space and time rescaled
according to \eqref{xt}:
\begin{equation}\label{rd} 
  \eps u_t=\eps \Delta u+\frac{1}{\eps } f(u)+ a(y,u) \qquad
  (x,t) \in \Sigma\times(0,+\infty) ,
\end{equation} 
with initial datum $u(x,0)=u_0(x) \geq 0$ and Neumann boundary
conditions on $\partial \Sigma$. Here $f(u)$ is a balanced bistable
nonlinearity with $f(0) = f(1) = 0$, and $|a(y, u)| \leq C u$ for some
$C > 0$. For
simplicity, we also assume that $a(x, u)$ does not depend on
$\eps$. Once again, the obtained results remain valid after perturbing
$a$ with terms that can be controlled by $C \eps u$ for some $C > 0$
independent of $\eps$.

As was already mentioned, the singular limit of \eqref{rd} as $\eps\to
0$ was considered in \cite{bss}, where convergence, in a suitable
sense, of positive solutions to the level-set formulation of the mean
curvature flow with a suitable forcing term $g$ was proved. Consider a
family of measurable sets $S(t)\subseteq\Sigma$ with regular boundary,
such that $\Gamma(t)=\partial S(t)$ evolves according to \eqref{mcf}
with
\begin{equation}
  \label{gdef} g(y):= \int_0^1 a(y,s)ds.
\end{equation}
We associate to this flow the following quasilinear parabolic problem
for $h = h(y, t) \in \mathbb R$ in $\Omega$, which corresponds to
\eqref{mcf}:
\begin{equation}\label{fmc} 
  h_t = \sqrt{1+|\nabla h |^2}  \left[\,\nabla \cdot \left(
      \dfrac{\nabla h }{\sqrt{1+|\nabla h|^2}}\right)+\frac{g}{c_W} 
  \right]  \quad {\rm in}\ \Omega \times (0, +\infty)\,,
\end{equation}  
with initial datum $h(y,0)=h_0(y)$, and Neumann boundary conditions on
$\partial \Omega$.  Note that the subgraph $S(t)=\{(y,z)\in\Sigma : z
< h(y,t)\}$ of the solution of \eqref{fmc} coincides with the family
of sets evolving according to \eqref{mcf}, with initial datum
$S_0=\{(y,z)\in\Sigma : z< h_0(y)\}$.

In Section 3, we extend the results of \cite{mn1} on
existence of traveling waves solutions to \eqref{rd} of maximal
propagation speed $c_\eps^\dagger$ to the considered problem for
every $\eps$ sufficiently small, under an assumption on the
forcing term $g$ for the limit problem, which is Assumption
\ref{h4} (see Theorem \ref{maximalspeedrd}).  Moreover, we show that
under a stronger condition on the forcing term $g$, which is
Assumption \ref{h6}, the traveling wave with maximal speed of
propagation is connecting two nondegenerate stable equilibria
(see Proposition \ref{unique}). The nondegeneracy of the equilibria is 
an important property for proving that these waves are 
long-time attractors for solutions to \eqref{rd}.

As for the forced mean curvature flow, we study in Section 4 the
existence of generalized traveling wave solutions, according to
Definition \ref{gentw}, appropriately adapting to the present case the
arguments developed for the periodic case in \cite{cn}. The main
result is Theorem \ref{maximalspeedfmc} which states, under Assumption
\ref{h4}, the existence of a maximal speed of propagation of
generalized traveling waves and provides an accurate description of
the waves traveling at maximal speed.  Moreover, in Theorem
\ref{maxspeedunique} and Theorem \ref{stabat} it is proved that, under
the stronger Assumption \ref{h6}, the traveling waves moving with the
maximal speed are unique and are attractors for the forced mean
curvature flow \eqref{fmc}.

Section 5 contains the main results of the paper. The first one is
Theorem \ref{gammaconvergence}, which provides a convergence result
relating the propagation of diffuse and sharp interfaces. In
particular, we prove that as $\eps\to 0$ the maximal propagation speed
of the traveling waves of \eqref{rd} converges to the maximal speed of
propagation for (generalized) traveling waves of \eqref{fmc} (for some
previous related results see \cite[Proposition 4.3 and Theorem
5.3]{mn2}). By Corollary \ref{r:lead}, the latter is then the average
speed of the leading edge for general front like initial data in the
limit $\eps \to 0$. We also show that as $\eps\to 0$ the traveling
waves of \eqref{rd} moving with the maximal speed converge, up to
translations, to the characteristic function of a set whose boundary
is a traveling wave of \eqref{fmc}, moving with maximal speed.  The
convergence is along subsequences and holds under Assumption \ref{h4}.
Under the stronger Assumption \ref{h6}, we can show that the limit is
independent of the subsequence.  The result is Theorem
\ref{stability}, which states that under Assumption \ref{h6}, the long
time limit of solutions to \eqref{rd} converges, as $\eps\to 0$, to a
traveling wave solution to \eqref{fmc}, translating with maximal speed
$c^\dagger$. In addition, in our proofs we employed some new uniform
estimates for minimizers of Ginzburg-Landau functionals with respect
to compactly supported perturbations, which extend those of
\cite{cc,fv,nv} and are of independent interest. These are presented
in the Appendix.

\smallskip

\paragraph*{Notations.} Throughout the paper $H^1$, $BV$, $L^p$, $C^k$,
$C^k_c$, $C^{k,\alpha}$ denote the usual spaces of Sobolev functions,
functions of bounded variation, Lebesgue functions, continuous
functions with $k$ continuous derivatives, $k$-times continuously
differentiable functions with compact support, continuously
differentiable functions with H\"older-continuous derivatives of order
$k$ for $\alpha \in (0,1)$ (or Lipschitz-continuous when $\alpha =
1$), respectively.  For a point $x \in \Sigma$ in the cylinder $\Sigma
= \Omega \times \mathbb R$, where $\Omega \subset \mathbb R^{n-1}$, we
always write $x = (y, z)$, where $y \in \Omega$ is the transverse
coordinate and $z \in \mathbb R$ is the coordinate along the cylinder
axis. Depending on the context, the symbol $\nabla$ is understood to
denote differential operators acting on functions defined on either
the whole cylinder $\Sigma$, or on its cross-section $\Omega$. The
symbol $B(x, r)$ stands for the open ball in $\mathbb R^n$ with radius
$r$ centered at $x$, and for a set $A$ the symbols $\overline A$,
$|A|$ and $\chi_A$ always denote the closure of $A$, the Lebesgue
measure of $A$ and the characteristic function of $A$,
respectively. We also use the notation $\dashint_A u^2 dx =
\frac{1}{|A|} \int_A u^2 dx$, and the convention that $\ln 0 =
-\infty$ and $e^{-\infty} = 0$.


\section{Assumptions} \label{standass}

We start by listing the assumptions we shall make on the
nonlinearities $f$ and $a$, and the corresponding forcing $g$
appearing in the evolution problems.  We associate to $f$ and $a$ the
potentials
\begin{equation}\label{pote} 
  W(u) :=-\int_0^u  f(s)\,ds, \qquad G(y,u) := \int_0^u a(y,s)\,ds. 
\end{equation} 
Recall the definition of the forcing term $g$ in \eqref{gdef}:
\begin{equation}
  \label{gdef2} 
  g(y):= G(y,1).
\end{equation}

We now state our assumptions on the functions $a$ and $f$.  Assume
that $\partial \Omega$ is of class $C^2$, and let $\alpha\in(0,1]$.

\begin{assumption}\label{g} 
  $a \in C^{\alpha}_{loc}(\overline\Omega \times \R)$, $a_u \in
  C^{\alpha}_{loc}(\overline\Omega \times \R)$, $a(\cdot,0)=0$.
\end{assumption}

\begin{assumption}\label{f}\rm  
  $f \in C^{1,\alpha}_{loc}\big(\R\big)$, $f(0)=f(1)=0$, $f'(0) < 0$,
  $f'(1) < 0$, $W(1) = W(0) = 0$, $W(u) > 0$ for all $u \not= 0,1$,
  and $\displaystyle \liminf_{|u| \to \infty} W(u) > 0$.
\end{assumption}
\noindent Assumption \ref{f} implies that $W(u)$ is a balanced
non-degenerate double-well potential (as a model function one could
think of $W(u) = \frac14 u^2(1-u)^2$ corresponding to the example
considered in the introduction). However, we do not require that $f$
has only one other zero, which is located in $(0,1)$, as is usually
done in the literature. Instead, we only assume that $u = 0$ and $u =
1$ have the same value of $W$, and that $W$ is greater for all other
values of $u$, including at infinity.  Note that by Assumptions
\ref{g} and \ref{f} there exists $C,\delta_0 > 0$, depending only on
$f$ and $a$, such that for every $\eps \leq C^{-1} \delta_0$
\begin{equation}\label{pot}
  \eps^{-1} W(u)- G(y,u) \geq
  0 \qquad \forall (y,u)\in \overline{\Omega}\times 
  \big(\mathbb R \backslash (1-C\sqrt\eps, 1 +C\sqrt\eps)\big),
\end{equation} 
and
\begin{equation}\label{pit}
  \eps^{-1} W(\cdot) - G(y,\cdot) \text{ is increasing on
  }[1+C\eps,1+\delta_0] 
  \qquad\forall y\in \overline{\Omega}.
\end{equation}

\begin{Aremark}\label{max} \rm Observe that, if the initial datum
  $u_0$ satisfies $0\leq u_0(x)\leq 1 + \delta$ for some $\delta \in
  (0, \delta_0)$ and all $x\in \Sigma$, then by the maximum principle
  and \eqref{pit} we have $0\le u(x,t)\le 1+ \delta$ for all $(x,t)\in
  \Sigma\times [0, +\infty)$ and all $\eps \leq C^{-1} \delta$.
\end{Aremark}

We recall the standard definition of the perimeter of a measurable set
$A\subseteq \Omega$ relative to $\Omega$ \cite{giustibook,afp}:
\begin{equation}\label{aper} 
  \Per(A, \Omega) :=    \sup \left\{ \int_A \nabla \cdot \phi (y)  dy:
    \, \phi \in C^1_c(\Omega;\R^{n -1}), \ |\phi|
      \leq 1 \right\}.
\end{equation}  
With the help of \eqref{aper}, the standing assumption to study front
propagation problem for \eqref{rd} and \eqref{fmc} will be the
following condition on the forcing term $g$:

\begin{assumption}\label{h4}\rm 
  Let $g \in C^\alpha(\overline\Omega)$. Then there exists $A\subseteq
  \Omega$ such that
\begin{equation}\label{gcondition} 
  \int_{A} g(y) dy\, >  c_W \,\Per(A,\Omega).
\end{equation} 
\end{assumption} 
\noindent This assumption basically ensures that the trivial state $u
= 0$ is energetically less favorable for $\eps$ sufficiently small,
resulting in the existence of the invasion fronts.

\begin{Aremark}\rm
  Notice that \eqref{gcondition} implies, in particular, that
  $\sup_\Omega g>0$, and is automatically satisfied if
  \begin{equation} \label{g4} 
    \int_\Omega g(y)dy >0.
  \end{equation}
\end{Aremark}

Finally, we list an additional assumption under which stronger
conclusions about the convergence of fronts can be made.
\begin{assumption}
  \label{h6} \rm Let $g \in C^\alpha(\overline\Omega)$ and assume that
  \eqref{g4} holds. Then $\Omega \times \mathbb R$ is the unique
  minimizer of $\mathcal{F}_{c^\dag}$ under compact perturbations
  among sets $S = \omega \times \mathbb R$ with $\omega\subseteq
  \Omega$ and $c^\dag := \inf \{ c > 0: \inf \mathcal F_c \geq 0 \}
  \in (0,\infty)$.
\end{assumption} 
\noindent Clearly, Assumption \ref{h6} is quite implicit.  In
Proposition \ref{p:as4suf} we give some sufficient conditions for it
to hold, first in the two-dimensional case and then in every
dimension.

Throughout the rest of the paper Assumptions \ref{g}--\ref{h4} are
always taken to be satisfied, with $g$ defined by \eqref{gdef2}. The
consequences of Assumption \ref{h6} will be explored in Section
\ref{sec:unique}.

\section{Traveling waves in the diffuse interface
  case} \label{s:diffuse} 

In this section we consider the front propagation problem in the
cylinder $\Sigma$ for the reaction-diffusion equation in \eqref{rd}
with $\eps>0$.  We are particularly interested in the special
solutions of the reaction-diffusion equation \eqref{rd} in the form of
{\em traveling waves}, i.e., solutions of \eqref{rd} of the form $u(x,
t) = \ou(y, z - c t)$, for some $c \in \R$ and $\ou \in C^2(\Sigma)
\cap C^1(\overline\Sigma) \cap L^\infty(\Sigma)$. The constant $c$ is
referred to as the wave speed and the function $\ou$ as the traveling
wave profile.  In particular, the profile of the traveling wave solves
the equation
\begin{equation}\label{twrd} 
  \eps\Delta\ou +c \eps
  \ou_z+\frac{1}{\eps }f(\ou)+a(y,\ou)=0 \qquad   (y,z)\in \Sigma,
\end{equation}
with Neumann boundary conditions ${ \nu}\cdot \nabla{\ou} =0$ on
$\partial \Sigma$.  

More specifically, we are interested in the traveling wave solutions
in the form of fronts invading the equilibrium $v = 0$ from above. By
an equilibrium for \eqref{rd}, we mean a function
$v:\overline{\Omega}\to \R$ which solves
\begin{equation}\label{el} 
  \eps\Delta v+\frac{1}{\eps } f(v )+a(y,v)=0 \qquad y\in \Omega,  
\end{equation} 
with ${ \nu}\cdot \nabla{v} =0$ on $\partial \Omega$. Note that by our
assumptions $v = 0$ is always an equilibrium. In terms of the
propagation speed and the traveling wave profile, front solutions
invading zero from above are bounded solutions of \eqref{twrd} that
satisfy
\begin{equation}\label{ipo}
  c> 0,\qquad \bar u > 0,\qquad {\rm and\ }\bar u(\cdot, z) \to 0\
  {\rm uniformly\ as\ }z \to +\infty. 
\end{equation}

Note that existence and qualitative properties of traveling fronts in
a variety of settings have been extensively studied, starting with the
classical work of Berestycki and Nirenberg \cite{b2}, who analyzed (in
our setting and using our notation) traveling fronts connecting zero
with some equilibrium $v > 0$, i.e., those positive front solutions of
\eqref{twrd} that also satisfy $\ou(\cdot, z) \to v$ uniformly as $z
\to -\infty$. We point out that existence of the considered solutions
is not guaranteed in general. In particular, we have to impose some
condition on $g$ assuring the existence of non-trivial positive
equilibria.  The existence of such non-trivial equilibria for $\eps
\ll 1$ will be a consequence of Assumption \ref{h4} (see Proposition
\ref{proH3}). Similarly, although for a fixed equilibrium $v > 0$ as
the limit at $z = -\infty$ there is at most one (modulo translations)
front solution (see \cite{b2}, under some technical assumptions, and
\cite{mn3} for a general result in the class of the so-called
variational traveling waves), in general front solutions of
\eqref{twrd} may not be unique. There is, however, at most one front
solution of \eqref{twrd} which governs the propagation behavior of
solutions of \eqref{rd} with front-like initial data. These solutions
can be characterized variationally (see \cite{mn1} and the following
section) and are the main subject of our study.

\subsection{Variational principle}

Following the variational approach to front propagation problems
\cite{mn1} (see also \cite{mu,lmn,mn2,mn3}), for every $c > 0$
we associate to the reaction-diffusion equation in \eqref{rd} the
energy functional (for fixed $\eps>0$)
\begin{equation}\label{functionalrd} 
  \Phi_{c}^\eps(u)=\int_\Sigma e^{c  z} \left(\frac{\eps}{2} |\nabla
    u|^2 +\frac{1}{\eps} W(u)- G(y,u)\right) dx.
\end{equation}  
This functional is naturally defined on $H^1_c(\Sigma) \cap
L^\infty(\Sigma)$, where $H^1_{c}(\Sigma)$ is an exponentially
weighted Sobolev space with the norm
\begin{equation} 
\label{expnorm} 
\|u\|^2_{H^1_c(\Sigma)} = \int_\Sigma
e^{cz} (|\nabla u|^2 + |u|^2)\,dx.
\end{equation} 
Furthermore, the functional $\Phi_{ c}^\eps$ is differentiable in
$H^1_c(\Sigma) \cap L^\infty(\Sigma)$, and its critical points
satisfy the traveling wave equation \eqref{twrd} \cite{lmn,mn1}.

\begin{Aremark}\rm
  Following \cite[Section 4]{b2} (see also \cite[Theorem 4.1]{vega2}
  and \cite[Theorem 3.3(iii)]{mn1}) one can show that every traveling
  wave solution $(c, u)$ of \eqref{rd}, with $c>0$ and satisfying
  \eqref{ipo}, belongs to $H^1_c(\Sigma)$ and is a critical point of
  $\Phi_{ c }^\eps$.  In particular, non-trivial minimizers of
  $\Phi_c^\eps$ are the fastest traveling wave solutions of \eqref{rd}
  invading the equilibrium $u=0$ \cite{lmn,mn1}.  We observe that
  non-trivial critical points of $\Phi_c^\eps$ which are not
  minimizers may also exist and correspond to traveling waves with
  lower speeds (see the discussion in \cite[Section 6]{mn1}).
\end{Aremark}

Critical points of $\Phi_c^\eps$ and, in particular, minimizers of
$\Phi_c^\eps$ play an important role for the long-time behavior of the
solutions of the initial value problem associated with \eqref{rd} in
the case of front-like initial data.  Indeed, in \cite{mn3} it is
proved under generic assumptions on the nonlinearity (see also
\cite{mu,mn1}) that the non-trivial minimizers of $\Phi_c^\eps$ over
$H^1_c(\Sigma)$ are selected as long-time attractors for the initial
value problem associated to \eqref{rd} with front-like initial data.
Also, in \cite{mn1} it was proved under minimal assumptions on the
nonlinearity that the speed of the leading edge of the solution is
determined by the unique value of $c^\dag_\eps > 0$ for which
$\Phi^\eps_{c^\dag_\eps}$ has a non-trivial minimizer.  Appropriate
assumptions to guarantee existence of minimizers of $\Phi^\eps_c$ were
given in \cite{mn1}.  Here we show that in our case these conditions
are verified for every $\eps$ sufficiently small (see Theorem
\ref{maximalspeedrd}).

Let us introduce an auxiliary functional
\begin{equation}\label{functionalaux} 
  E^\eps(v)=\int_\Omega  \left(\frac{\eps}{2} |\nabla v|^2
    +\frac{W(v)}{\eps} - G(y,v)\right)dy\ \qquad v\in H^1(\Om)
  \cap L^\infty(\Om). 
\end{equation} 
Note that $v=0$ is always a critical point of this functional for
every $\eps$.  Moreover, every critical point of $E^\eps$ is an
equilibrium for the reaction-diffusion equation \eqref{rd}.

\begin{Aremark}\rm
  \label{remomo}
  By \cite{momo} (see also \cite{braides}) we have that
\begin{equation}\label{eqE0}
  \Gamma-\lim_{\eps\to 0} E^\eps(u)  = \begin{cases}
    E^0(A) :=c_W\Per(A, \Omega) -\int_{A} g \, dy & {\rm if\
    }u=\chi_A, \\
    +\infty & {\rm otherwise,}
\end{cases}
\end{equation}
where $c_W$ is defined in \eqref{aW}, $A$ is Lebesgue measurable and
the convergence is understood in the sense of $\Gamma$-convergence in
$L^1(\Omega)$ \cite{braides}.
\end{Aremark}

\begin{Adefinition}\label{locmin}
  A function $v\in H^1(\Omega) \cap L^\infty(\Omega)$ is a stable
  critical point of $E^\eps$ if it is a critical point of the
  functional and the second variation of $E^\eps$ is nonnegative, i.e.
  \begin{equation}\label{secondvar}
    \int_\Omega \Big( \eps|\nabla
    \phi|^2 +\left (\eps^{-1} W''(v)-G_{uu}
      (y,v)\right)\phi^2 \Big) dy \geq 0 \qquad \forall
    \phi\in H^1(\Om)\,.
  \end{equation}
  Moreover, $v$ is a nondegenerate stable critical point of $E^\eps$
  if strict inequality holds in \eqref{secondvar}.
\end{Adefinition} 

\begin{Aproposition}\label{proH3}    
  Under Assumptions \ref{g}, \ref{f} and \ref{h4}, there exist
  positive constants $\eps_0$ and $C$ such that for all $\eps <
  \eps_0$ there exists $v_\eps^0 \in H^1(\Omega)$ such that $0 \le
  v_\eps^0\leq 1$ and $E^\eps(v_\eps^0)<0$.
\end{Aproposition}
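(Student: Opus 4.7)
The plan is to produce the required competitor $v_\eps^0$ by building a Modica--Mortola type recovery sequence for the $\Gamma$-convergence recalled in Remark \ref{remomo}, starting from the set $A$ supplied by Assumption \ref{h4}. Since $E^0(\chi_A) = c_W \Per(A,\Omega) - \int_A g\,dy < 0$, it suffices to construct, for every small $\eps$, an admissible $v_\eps^0 \in H^1(\Omega)$ taking values in $[0,1]$ whose diffuse-interface energy is close to $E^0(\chi_A)$.

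First I would replace $A$ by a smoother set. By standard approximation of sets of finite perimeter, there exists a sequence of open sets $A_n \subseteq \Omega$ with $C^2$ boundary meeting $\partial\Omega$ transversally, such that $\chi_{A_n} \to \chi_A$ in $L^1(\Omega)$ and $\Per(A_n,\Omega) \to \Per(A,\Omega)$. By $L^1$-continuity of $B \mapsto \int_B g\,dy$ (recall $g \in C^\alpha(\overline\Omega)$), one has $E^0(\chi_{A_n}) \to E^0(\chi_A) < 0$, so one may fix an $n$ with $A' := A_n$ satisfying $c_W \Per(A',\Omega) - \int_{A'} g\,dy < 0$.

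Next I would use the classical Modica--Mortola profile. Let $q \in C^2(\R;[0,1])$ be the optimal monotone heteroclinic with $q(-\infty)=1$, $q(+\infty)=0$ and $\tfrac12 (q')^2 = W(q)$, so that $\int_\R \bigl(\tfrac12 (q')^2 + W(q)\bigr)\,dt = c_W$ by \eqref{aW}. Let $d_{A'}$ denote the signed distance to $\partial A'$, negative inside $A'$, and define
\begin{equation*}
v_\eps^0(y) := q\!\left(\frac{d_{A'}(y)}{\eps}\right), \qquad y \in \Omega.
\end{equation*}
Then $v_\eps^0 \in H^1(\Omega)$ and $0 \le v_\eps^0 \le 1$ by monotonicity of $q$; the usual cut-off modification of $d_{A'}$ away from $\partial A'$ and near $\partial\Omega$ ensures compatibility with the Neumann setting without affecting leading-order estimates. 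By the coarea formula applied with $t = d_{A'}(y)/\eps$, the identity $|\nabla d_{A'}|=1$ in a tubular neighborhood of $\partial A'$, and the $C^2$ regularity of $\partial A'$,
\begin{equation*}
\int_\Omega \left( \frac{\eps}{2} |\nabla v_\eps^0|^2 + \frac{W(v_\eps^0)}{\eps} \right) dy \;\xrightarrow[\eps \to 0]{}\; c_W \Per(A',\Omega).
\end{equation*}
For the forcing, $v_\eps^0 \to \chi_{A'}$ in $L^1(\Omega)$, and since $G(\cdot,0) = 0$ by Assumption \ref{g}, $G(\cdot,1) = g(\cdot)$, and $G$ is continuous in $u$ uniformly in $y$, dominated convergence yields $\int_\Omega G(y,v_\eps^0)\,dy \to \int_{A'} g\,dy$. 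Hence $\lim_{\eps\to 0} E^\eps(v_\eps^0) = c_W \Per(A',\Omega) - \int_{A'} g\,dy < 0$, and choosing $\eps_0$ small so that the energy is strictly negative for $\eps < \eps_0$ completes the proof.

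The only genuinely technical point is the interaction of the recovery construction with the Neumann boundary $\partial\Omega$: one must ensure that $\partial A'$ is regular up to $\partial\Omega$ and that the profile is truncated far from $\partial A'$ in a way that neither creates spurious perimeter contributions nor damages the $L^1$ convergence. This is handled by standard tools from the Modica--Mortola theory (e.g., the transversal smooth approximation of finite-perimeter sets in domains with $C^2$ boundary, and the fact that both terms in $E^\eps$ are continuous under $C^1$ diffeomorphisms of the ambient set), and I expect it to be the main, but only, obstacle.
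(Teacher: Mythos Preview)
Your proposal is correct and follows essentially the same route as the paper: both reduce to a smooth set $A'$ satisfying \eqref{gcondition} and then invoke the Modica--Mortola recovery (limsup) construction to produce $v_\eps^0\in H^1(\Omega)$ with $0\le v_\eps^0\le 1$ and $E^\eps(v_\eps^0)\to E^0(A')<0$. The paper simply cites the $\Gamma$-limsup construction from \cite{momo}, whereas you spell out the optimal profile and signed-distance computation, but the underlying argument is the same.
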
 

\begin{proof}  
  Without loss of generality we may assume that the set $A \subseteq
  \Omega$ in Assumption \ref{h4} has a smooth boundary. Then, by
  Remark \ref{remomo} we have
  \[
  \Gamma-\lim_{\eps\to 0} E^\eps(\chi_A)  = E^0(A)
  \]  
  where $E^0$ is defines in \eqref{eqE0}.  Recalling that $A$ has
  smooth boundary, by the $\Gamma$-limsup construction in \cite{momo}
  (see also Theorem \ref{gammaconvergence} below) there exists a
  family of functions $v_{\eps}^0 \in H^1(\Omega)$, with $0\le
  v_\eps^0 \le 1$ such that $v_\eps^0 \to \chi_A$ in $L^1(\Omega)$ as
  $\eps\to 0$, and
  $$
  \lim_{\eps \to 0}E^{\eps}(v_{\eps}^0) = E^0(A) <0.
  $$ 
  In particular, $E^\eps(v_\eps^0)<0$ for all $\eps$ sufficiently
  small.
 \end{proof} 

By Assumption \ref{f}, $v =0$ is a non-degenerate stable critical
point of the functional $E^\eps$ for every $\eps$ sufficiently
small. Indeed, defining
\begin{equation}\label{nu0}
  \nu_0^\eps := \min_{\int_\Omega \phi^2=1} \,
  \int_\Omega  \Big( \eps |\nabla \phi |^2 + \left( \eps^{-1} W''(0) -
    G_{uu}(y,0)\right) \phi^2 \Big)  dy,
\end{equation} 
observe that there exists $\eps_0>0$, depending on $W''(0)$ and
$\|a_u(\cdot, 0)\|_\infty$, such that
\begin{equation}\label{eqepszero}
  \nu_0^\eps >0 \qquad {\rm for\ all\ }\eps<\eps_0.
\end{equation}

\subsection{Existence of traveling waves}

We now state an existence result for the diffuse interface problem.
This result is a minor modification of the one in \cite[Theorem
3.3]{mn1}.

 \begin{Atheorem} 
   \label{maximalspeedrd} Under Assumptions \ref{g}, \ref{f} and
   \ref{h4}, there exist positive constants $\eps_0$ and $C$,
   depending on $f$, $a$ and $\Omega$, such that for all
   $0<\eps<\eps_0$ there exists a unique $c_\eps^{\dagger} > 0$
   such that
\begin{enumerate}
\item $\Phi^\eps_{c_{\eps}^\dagger}$ admits a non-trivial
  minimizer $\ou_\eps\in H^1_{c_\eps^\dagger} \cap L^\infty(\Sigma)$
  which satisfies
   \begin{equation}\label{lead} 
     \sup \Big\{  z \in \R \ | \sup_{y \in \Omega}  
     \ou_\eps(y, z) > \frac12 \Big\} = 0.  
\end{equation} 
\item $\ou_\eps \in C^2(\Sigma) \cap C^1(\overline\Sigma) \cap
  W^{1,\infty} (\Sigma)$, and $(c^\dagger_\eps, \ou_\eps)$ is a
  traveling wave solution to \eqref{rd}.
\item $0<\ou_\eps\leq 1+C\eps$, $(\ou_\eps)_z<0$ in $\overline\Sigma$,
  and
  \[
  \lim_{z\to +\infty} \ou_\eps(\cdot,z)=0\qquad \lim_{z\to -\infty}
  \ou_\eps(\cdot,z)=v_\eps \qquad {in\ }C^1(\overline\Omega),
  \] 
  where $v_\eps$ is a stable critical point of $E^\eps$ in
  \eqref{functionalaux} with $E^\eps(v_\eps)<0$.
\item $\Phi^\eps_{c_{\eps}^\dagger}(\bar u_\eps) = 0$, and all
  non-trivial minimizers of $\Phi_{c^\dag_\eps}^\eps$ are translates
  of $\bar u_\eps$ along $z$.
  \end{enumerate} 
\end{Atheorem}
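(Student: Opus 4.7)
The plan is to follow the variational framework of \cite{mn1}, adapting it to our heterogeneous nonlinearity and using Assumption \ref{h4} together with Proposition \ref{proH3} to verify the input hypotheses of that abstract scheme. Define
\[
c_\eps^\dagger := \inf \left\{ c > 0 : \inf_{u \in H^1_c(\Sigma) \cap L^\infty(\Sigma)} \Phi_c^\eps(u) \geq 0 \right\}.
\]
The first task is to show that $0 < c_\eps^\dagger < \infty$ for all $\eps$ sufficiently small. For the upper bound, I would use that, by Assumption \ref{f} and \eqref{pot}, the integrand of $\Phi_c^\eps$ is bounded below by a coercive quadratic form plus an exponentially weighted term that becomes dominant as $c \to \infty$; more concretely, using Poincar\'e inequality on $\Omega$ together with the sign of $W''(0)$ and the bound $|a(y,u)| \leq C u$, one shows $\inf \Phi_c^\eps \geq 0$ for $c$ large. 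For the strictly positive lower bound, I would use the function $v_\eps^0$ provided by Proposition \ref{proH3}, which satisfies $E^\eps(v_\eps^0) < 0$, and construct a test function $u(y,z) := \chi(z)\, v_\eps^0(y)$ with $\chi$ a smooth cutoff equal to $1$ on $(-\infty, 0]$ and equal to $0$ on $[1, \infty)$. A direct computation then yields
\[
\Phi_c^\eps(u) = \frac{1}{c} E^\eps(v_\eps^0) + O(1) \qquad \text{as } c \to 0^+,
\]
so $\Phi_c^\eps(u) < 0$ for all sufficiently small $c > 0$, which forces $c_\eps^\dagger > 0$.

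The heart of the argument is existence of a non-trivial minimizer $\ou_\eps$ at $c = c_\eps^\dagger$. Here I would replicate the strategy of \cite[Theorem 3.3]{mn1}: pick a sequence $c_k \nearrow c_\eps^\dagger$ with minimizers (or near-minimizers) $u_k$ of $\Phi^\eps_{c_k}$, normalized in the $z$-variable so that the leading edge condition \eqref{lead} holds, i.e., the sup of $u_k(\cdot, z)$ first exceeds $1/2$ at $z = 0$. The uniform $L^\infty$ bound $0 \leq u_k \leq 1 + C\eps$ follows from Remark \ref{max} applied to the truncation, and standard interior Schauder estimates provide $C^{2,\alpha}_{\mathrm{loc}}$ compactness on bounded subsets of $\Sigma$. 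The exponentially weighted norm gives integrability at $z = +\infty$, and the weighted energy bound combined with the non-degeneracy of $u = 0$ (inequality \eqref{eqepszero}) yields exponential decay of $u_k$ and of $(u_k)_z$ as $z \to +\infty$, uniformly in $k$, which rules out loss of mass to the right. Together with the normalization, this gives a limit $\ou_\eps \in H^1_{c_\eps^\dagger}(\Sigma)$ that is non-trivial and achieves $\inf \Phi^\eps_{c_\eps^\dagger} = 0$ by continuity of $c \mapsto \inf \Phi_c^\eps$, which also proves item (4). I expect this compactness/concentration step, and in particular the uniform tail estimate at $z \to +\infty$ coming from the spectral gap at zero, to be the main technical obstacle.

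The remaining claims then follow from structural arguments familiar from the homogeneous theory. Elliptic regularity applied to \eqref{twrd} upgrades $\ou_\eps$ to $C^2(\Sigma) \cap C^1(\overline\Sigma)$, and the linear growth of $a$ and boundedness of $\ou_\eps$ give $W^{1,\infty}$; the strong maximum principle and boundary point lemma, combined with $\ou_\eps \not\equiv 0$, yield $\ou_\eps > 0$. Monotonicity $(\ou_\eps)_z < 0$ is obtained by a sliding method: translating $\ou_\eps$ in $z$ and using that $\Phi^\eps_{c_\eps^\dagger}$ is translation-invariant together with the strong maximum principle applied to the difference of a translate and $\ou_\eps$. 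For the limits as $z \to \pm\infty$, the limit at $+\infty$ is enforced by $\ou_\eps \in H^1_{c_\eps^\dagger}$ together with uniform continuity, while at $-\infty$ monotonicity and bounds produce a limit $v_\eps \in C^1(\overline\Omega)$ which solves \eqref{el}; the strict inequality $E^\eps(v_\eps) < 0$ follows by inserting $v_\eps$ as a test function for the functional $E^\eps$ and noting that $\Phi^\eps_{c_\eps^\dagger}(\ou_\eps) = 0$ forces the $-\infty$ trace to carry strictly negative energy, which in turn gives stability through the minimality of $\ou_\eps$ under compactly supported perturbations. Finally, uniqueness of $c_\eps^\dagger$ and uniqueness of $\ou_\eps$ up to $z$-translation among non-trivial minimizers are consequences of the strict monotonicity in $c$ of $\inf \Phi_c^\eps$ (whenever a non-trivial minimizer exists) and a sliding/strong maximum principle argument comparing two minimizers, exactly as in \cite[Section 3]{mn1}.
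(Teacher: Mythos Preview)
Your proposal is correct and follows the same variational framework as the paper, which is likewise built on \cite{mn1}. The paper's proof is considerably more economical, however: rather than re-deriving the compactness and existence steps you sketch, it first observes that a cutting argument (replacing $u$ by $\tilde u=\max(0,\min(u,1+C\eps))$ only decreases $\Phi_c^\eps$, thanks to \eqref{pot} and \eqref{pit}) reduces the problem to minimization over $[0,1+C\eps]$-valued functions, and then invokes \cite[Theorem~3.9]{mn1} as a black box, checking its hypotheses via Proposition~\ref{proH3} and \eqref{eqepszero}. The normalization \eqref{lead} is then justified a posteriori from $\Phi^\eps_{c_\eps^\dagger}(\bar u_\eps)=0$ and \eqref{pot}, and the strict monotonicity up to the boundary by applying the strong maximum principle to the equation for $(\bar u_\eps)_z$. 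Your route buys self-containment at the cost of length; the paper's route buys brevity by outsourcing the heavy lifting.

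One small correction: your justification of the $L^\infty$ bound $0\le u_k\le 1+C\eps$ via Remark~\ref{max} is not quite right, since that remark concerns solutions of the parabolic initial value problem, not minimizers of $\Phi_c^\eps$. The correct argument is the functional-level cutting argument just described, which is exactly what the paper uses.
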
 

\begin{proof} 
  By \eqref{pot}, \eqref{pit} and Assumption \ref{f} it follows that,
  for $\eps$ sufficiently small, there holds
  $$
  \eps^{-1} W(u)-G(\cdot,u) > 0
  $$
  for all $u < 0$, and
  $$
  \eps^{-1} W(u)-G(\cdot,u) > \eps^{-1}W(1+C\eps)-G(\cdot,1+C\eps)
  $$
  for all $u>1+C\eps$.  By a cutting argument (as in \cite[Theorem
  3.3(i)]{mn1}), we then get, for any $c > 0$ and $u \in H^1_c(\Sigma)
  \cap L^\infty(\Sigma)$, that
  $$
  \Phi_c^\eps(\tilde u)\le \Phi_c^\eps(u),
  $$ 
  where $\tilde u(x) := \max(0, \min (u(x),1+C\eps))$, with strict
  inequality if $\mathrm{ess \, sup}_\Sigma u >1+C\eps$ or
  $\mathrm{ess \, inf}_\Sigma u < 0$. Therefore, the result follows
  from \cite[Theorem 3.9]{mn1} by minimizing $\Phi_c^\eps$ over
  functions with values in $[0, 1 + C \eps]$.  Notice that the
  assumptions in \cite{mn1} are satisfied thanks to Proposition
  \ref{proH3} and \eqref{eqepszero}. The estimate \eqref{lead} is due
  to the fact that, since $\Phi^\eps_{c_{\eps}^\dagger}(\bar u_\eps) =
  0$, by \eqref{pot} we have $\|u_\eps \|_{L^\infty(\Sigma)} >
  \frac12$.  The fact that $(\ou_\eps)_z<0$ up to the boundary of
  $\Sigma$ follows by the strong maximum principle applied to the
  elliptic equation satisfied by $(\ou_\eps)_z$, with Neumann boundary
  conditions, obtained by differentiating \eqref{twrd} in $z$.
 \end{proof}

\noindent Note that the choice of the value $\frac{1}{2}$ in
\eqref{lead} is arbitrary, and every other value in
$(0,1)$ could be used equivalently.
 
\begin{Aremark}\label{remo}
  \rm Observe that $\Phi_c^\eps(u(y,z-a))=e^{ca}
  \Phi_c^\eps(u(y,z))$ for all $c > 0$, $a\in\R$ and $u \in
    H^1_c(\Sigma) \cap L^\infty(\Sigma)$. In particular, if
  $\ou_\eps$ is a non-trivial minimizer of
  $\Phi_{c^\dag_\eps}^\eps$, then
  $\Phi_{c^\dag_\eps}^\eps(\ou_\eps(y,
    z))=\Phi_{c^\dag_\eps}^\eps(\ou_\eps(y,z-a))=0$.
  Moreover, from \cite[Theorem 3.9]{mn1} we have that
  $\Phi_c^\eps(u)>0$ for every non-zero $u\in H^1_c(\Sigma) \cap
    L^\infty(\Sigma)$ and $c>c_\eps^\dagger$, while $\inf
  \Phi_c^\eps(u)=-\infty$ for $c<c_\eps^\dagger$.
\end{Aremark}

\subsection{Uniform bounds}

We next establish several properties of the traveling wave solutions
$(c^\dag_\eps, \ou_\eps)$ in Theorem \ref{maximalspeedrd} that will
allow us to pass to the limit as $\eps \to 0$ in Section
\ref{secconv}. 

We begin by proving an isoperimetric type inequality for the weighted
perimeter $\Per_c$.

\begin{Aproposition}\label{coronuovo} 
  Let $c>0$ and let $S \subset \Sigma$ be a measurable set with
  $\int_S e^{cz} dx < \infty$. Then
  \begin{align}
    \label{percvc}
    \Per_c(S, \Sigma)\geq c \int_S e^{cz} dx.
  \end{align}
\end{Aproposition}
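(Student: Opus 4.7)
The plan is to exhibit a sequence of admissible test vector fields in the supremum \eqref{perc} that point in the $\hat z$ direction, and to show the associated integrals converge to $c\int_S e^{cz}\,dx$. The starting observation is that, since $\nabla(e^{cz}) = c\,e^{cz}\hat z$, the integrand in \eqref{perc} is a pure weighted divergence:
$$
e^{cz}\bigl(\nabla\cdot\phi + c\,\hat z\cdot\phi\bigr) = \nabla\cdot\bigl(e^{cz}\phi\bigr).
$$
So morally the ``best'' $\phi$ is $\hat z$, which gives $c\,e^{cz}$ pointwise on $S$; the whole issue is that $\hat z \notin C^1_c(\Sigma;\R^n)$, and one must cut off carefully.

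The construction I would use is the following. Pick $\zeta_k \in C^\infty_c(\Omega)$ with $0\le\zeta_k\le 1$ and $\zeta_k(y)\to 1$ for a.e.\ $y\in\Omega$ (possible because $\Omega$ is open), and $\eta_k \in C^\infty_c(\R)$ with $\eta_k\equiv 1$ on $[-k,k]$, $\eta_k\equiv 0$ outside $[-k-1,k+1]$, and $|\eta_k'|\le 2$. Define
$$
\phi_k(y,z) := \zeta_k(y)\,\eta_k(z)\,\hat z.
$$
Then $\phi_k \in C^1_c(\Sigma;\R^n)$ with $|\phi_k|\le 1$, so each $\phi_k$ is admissible in \eqref{perc}, and
$$
\int_S e^{cz}\bigl(\nabla\cdot\phi_k + c\,\hat z\cdot\phi_k\bigr)\,dx
= c\int_S e^{cz}\zeta_k(y)\eta_k(z)\,dx + \int_S e^{cz}\zeta_k(y)\eta_k'(z)\,dx.
$$

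It remains to pass to the limit $k\to\infty$ in each piece. For the main term, the integrand is bounded in absolute value by $e^{cz}\chi_S$, which is integrable by hypothesis, and converges pointwise a.e.\ to $c\,e^{cz}\chi_S$; by dominated convergence the limit is $c\int_S e^{cz}\,dx$. For the error term, $|\eta_k'(z)|\le 2\chi_{\{k\le|z|\le k+1\}}(z)$ gives
$$
\left|\int_S e^{cz}\zeta_k(y)\eta_k'(z)\,dx\right| \le 2\int_S e^{cz}\chi_{\{k\le|z|\le k+1\}}\,dx \;\xrightarrow[k\to\infty]{}\; 0,
$$
again by dominated convergence using $\int_S e^{cz}\,dx<\infty$. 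Taking the supremum in \eqref{perc} over all admissible $\phi$ and passing to the limit along $\phi_k$ then yields $\Per_c(S,\Sigma) \ge c\int_S e^{cz}\,dx$.

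I do not anticipate a genuine obstacle here; the only delicate point is matching the definition of $C^1_c(\Sigma;\R^n)$, which requires the support of $\phi_k$ to be compact inside the open cylinder $\Sigma=\Omega\times\R$. That is precisely why both cutoffs $\zeta_k$ and $\eta_k$ are needed, and why the transverse cutoff $\zeta_k$ is chosen compactly supported in the \emph{open} set $\Omega$. The finite weighted volume assumption $\int_S e^{cz}\,dx<\infty$ enters exactly once, to kill the error coming from $\eta_k'$ at the far ends of the cylinder.
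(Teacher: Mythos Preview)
Your proof is correct and follows essentially the same approach as the paper: test the supremum in \eqref{perc} with $\hat z$ times a compactly supported cutoff, then pass to the limit using the finite weighted volume to kill the error from the $z$-cutoff. The only cosmetic differences are that the paper packages the two cutoffs into a single $\eta_\delta$ built from ${\rm dist}(y,\partial\Omega)$ and uses monotone convergence for the main term, whereas you use separate cutoffs $\zeta_k,\eta_k$ and dominated convergence.
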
 
\begin{proof} 
  By the definition of the weighted perimeter in \eqref{perc}, we have 
  \begin{align}
    \Per_c(S, \Sigma) \geq \int_S e^{cz} \left(\nabla \cdot \phi + c
      \hat z \cdot \phi \right) dx,
  \end{align}
  for any admissible test function $\phi$. Choosing $\phi(y, z) =
  \eta_\delta (y, z) \hat z$, where the cutoff function
  $\eta_\delta(y, z) := \eta(\delta^{-1} \textrm{dist} (y, \partial
  \Omega)) (1 - \eta(\delta |z| ))$ and $\eta \in C^1(\R)$ with $0
  \leq \eta'(x) \leq 2$ for all $x \in \R$, $\eta(x) = 0$ for all $x
  \leq 1$ and $\eta(x) = 1$ for all $x \geq 2$, for $\delta > 0$
  sufficiently small we find that
  \begin{align}
    \label{etadeltaz}
    \Per_c(S, \Sigma) \geq c \int_S e^{cz} \, \eta_\delta(y, z) \, dx
    - \delta \int_S e^{cz} \eta'(\delta |z|) dx \, .
  \end{align}
  Then, passing to the limit as $\delta \to 0$, we conclude by the
  monotone convergence theorem and the fact that $\int_S e^{cz}
  \eta'(\delta |z|) dx \leq 2 \int_S e^{cz} dx < \infty$.
 \end{proof}

We now prove a uniform upper bound for the
speeds $c^\dag_\eps$ as $\eps \to 0$.

\begin{Aproposition}
  \label{pmaxspeede}
  Let $\eps$ and $c^\dag_\eps$ be as in Theorem
  \ref{maximalspeedrd}. Then there exist constants $\eps_1 > 0$ and $M
  > 0$ depending only on $W$ and $G$ such that $0 < \ou_\eps <
  \frac32$ and $c_\eps^\dagger\leq M$ for all $0 < \eps < \eps_1$.
\end{Aproposition}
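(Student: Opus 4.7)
The bound $\bar u_\eps < \tfrac{3}{2}$ is essentially immediate from Theorem \ref{maximalspeedrd}(iii), which provides $0 < \bar u_\eps \leq 1 + C\eps$ for every $\eps < \eps_0$: choosing $\eps_1 < \min(\eps_0, 1/(3C))$ forces $\bar u_\eps \leq 1 + 1/3 < \tfrac{3}{2}$ on $\overline\Sigma$. So the substantive content is the uniform upper bound on $c^\dag_\eps$.

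My strategy is to combine the vanishing of the energy at the selected wave, $\Phi^\eps_{c^\dag_\eps}(\bar u_\eps) = 0$ from Theorem \ref{maximalspeedrd}(iv), with the weighted isoperimetric inequality of Proposition \ref{coronuovo} via a Modica--Mortola/coarea argument. Setting $\Psi(u) := \int_0^u \sqrt{2W(s)}\,ds$, so that $\Psi \geq 0$ on $[0,\infty)$, Young's inequality gives pointwise $\tfrac{\eps}{2}|\nabla u|^2 + \tfrac{1}{\eps} W(u) \geq |\nabla \Psi(u)|$. Multiplying by $e^{c^\dag_\eps z}$, integrating over $\Sigma$, and using the coarea formula slicewise together with Proposition \ref{coronuovo} applied to each superlevel set $\{\Psi(\bar u_\eps) > t\}$ yields
\[
\int_\Sigma e^{c^\dag_\eps z} \left( \tfrac{\eps}{2}|\nabla \bar u_\eps|^2 + \tfrac{1}{\eps} W(\bar u_\eps) \right) dx \;\geq\; \int_0^\infty \Per_{c^\dag_\eps}\bigl(\{\Psi(\bar u_\eps)>t\},\Sigma\bigr)\, dt \;\geq\; c^\dag_\eps \int_\Sigma e^{c^\dag_\eps z} \Psi(\bar u_\eps)\, dx.
\]
Combined with $\Phi^\eps_{c^\dag_\eps}(\bar u_\eps) = 0$, this reduces the problem to the single potential estimate
\[
c^\dag_\eps \int_\Sigma e^{c^\dag_\eps z} \Psi(\bar u_\eps)\, dx \;\leq\; \int_\Sigma e^{c^\dag_\eps z} G(y, \bar u_\eps)\, dx.
\]

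To conclude, I will show that $K := \sup\{ G(y,u)/\Psi(u) : (y,u) \in \overline\Omega \times (0,\tfrac{3}{2}] \}$ is finite. Near $u = 0$, Assumption \ref{g} forces $a(y,0) = 0$, so $G(y,u) = O(u^2)$ uniformly in $y$; Assumption \ref{f} gives $W''(0) = -f'(0) > 0$, so $\Psi(u) \geq c_0 u^2$ for some $c_0 > 0$ and $u$ small. The two quadratic behaviors match, bounding $G/\Psi$ near the origin; continuity on the compact set $\overline\Omega \times [\delta, \tfrac{3}{2}]$ together with $\Psi > 0$ handles the rest. Since $\bar u_\eps \not\equiv 0$, the weighted $\Psi$-integral is strictly positive and can be cancelled, giving $c^\dag_\eps \leq K =: M$, with $M$ depending only on $W$ and $G$.

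The only delicate step is the uniform matching of $G$ and $\Psi$ as $u \to 0^+$; this hinges on the nondegeneracy of the well at $u = 0$ (so that $\sqrt{W(s)}$ does not vanish faster than linearly) and on $a(y, 0) \equiv 0$ (so that $G$ is genuinely quadratic at $0$). Everything else --- the pointwise Modica--Mortola inequality, the coarea representation, and Proposition \ref{coronuovo} --- is $\eps$-independent by construction, so $M$ indeed depends only on $W$ and $G$ as required.
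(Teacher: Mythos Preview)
Your proof is correct, and it is in the same spirit as the paper's (Modica--Mortola combined with the weighted isoperimetric inequality of Proposition~\ref{coronuovo} and the identity $\Phi^\eps_{c^\dag_\eps}(\bar u_\eps)=0$), but the implementation is genuinely different.

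The paper localizes to the region $\{\bar u_\eps>\tfrac12\}$ and uses the well at $u=1$: it invokes $W(u)\ge K(1-u)^2$ on $[\tfrac12,\tfrac32]$, applies the Modica--Mortola trick with the auxiliary function $h(u)=\tfrac14+(u-1)|u-1|$, and discards the region $\{\bar u_\eps\le\tfrac34\}$ by choosing $\eps_1$ small enough that $\eps^{-1}W(u)-2G(y,u)\ge0$ there. This yields $M=\tfrac{32}{3K}\max_{\overline\Omega\times[\frac34,\frac32]}G$. Your argument instead works globally with $\Psi(u)=\int_0^u\sqrt{2W}$, avoids any domain splitting, and obtains $M=\sup_{\overline\Omega\times(0,\frac32]}G/\Psi$. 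The price you pay is the analysis of the ratio $G/\Psi$ as $u\to 0^+$, which is exactly where the nondegeneracy $f'(0)<0$ and the condition $a(\cdot,0)=0$ enter; the paper sidesteps this by simply throwing away small values of $u$. Your route is cleaner and makes the role of the structural hypotheses at $u=0$ more transparent, while the paper's route is slightly more robust in that it only needs the behaviour of $G$ on $[\tfrac34,\tfrac32]$.
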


\begin{proof}
  By Theorem \ref{maximalspeedrd} and Assumptions 1 and 2 we may take
  $\eps_1 \in (0, \eps_0)$ so small that $0 < \bar u_\eps < \frac32$
  and that $\eps^{-1} W(u) - 2 G(y,u)\geq 0$ whenever $0\leq u\leq
  \frac{3}{4}$. In particular, the set $\{\bar u_\eps >\frac34\}$ has
  positive measure.  Furthermore, we have $W(u) \geq K (1 - u)^2$
  whenever $ u \in [ \frac{1}{2}, \frac32]$, for some $K \in(0,1]$.
  Hence
  \begin{eqnarray*}
    \Phi^\eps_{c^\dag_\eps}(\bar u_\eps) &\geq& \frac{K}{2} \int_{\{
      \bar u_\eps > \frac{1}{2}\}}e^{c^\dag_\eps z} \Big( \eps |\nabla
    \bar u_\eps|^2 + \eps^{-1} (1 - \bar u_\eps)^2 \Big) dx 
    \\
    && - \int_{\{
      \bar u_\eps > \frac34\}}e^{c^\dag_\eps z} G
    (y, \bar u_\eps) \, dx  
    \\
    &\geq& \frac{K}{2} \int_{\{ \bar u_\eps > \frac{1}{2}
      \}}e^{c^\dag_\eps z} | 2 (1 - \bar u_\eps) \nabla \bar u_\eps|
    dx - \int_{\{ \bar u_\eps > \frac34\}}e^{c^\dag_\eps z} G
    (y, \bar u_\eps) \, dx  \\
    &=&\frac{K}{2} \int_{\{ \bar u_\eps > \frac12 \}}e^{c^\dag_\eps z}
    | \nabla h(\bar u_\eps)| dx - \int_{\{ \bar u_\eps >
      \frac34\}}e^{c^\dag_\eps z} G (y, \bar u_\eps) \, dx\, ,
  \end{eqnarray*}
  where $h(u) := \frac14 + (u - 1) |u - 1|$ is an increasing function
  of $u$ with $h(\frac12) = 0$.  In turn, by the co-area formula
  \cite{afp} we get
  \begin{equation}\label{coar2}
    \Phi^\eps_{c^\dag_\eps}(\bar u_\eps)\geq \frac{K}{2}
    \int_0^{\frac12}
    \Per_{c^\dag_\eps}( \{ h(\bar u_\eps) > t \} ) dt -  
    \int_{\{ \bar u_\eps > \frac34\}}e^{c^\dag_\eps z} G (y, \bar
    u_\eps) \, dx . 
  \end{equation} 
  On the other hand, by Proposition \ref{coronuovo} we have
  \[
  \Per_{c^\dag_\eps}( \{ h(\bar u_\eps) >t \} ) \geq c^\dag_\eps
  \int_{ \{ h(\bar u_\eps) >t \} } e^{c^\dag_\eps z} dx,
  \]
  where we noted that by Theorem \ref{maximalspeedrd}(iii) the sets
  $\{ (y, z) : h(\bar u_\eps(y, z)) > t \}$ are bounded above in $z$
  uniformly in $y$ for each $t > 0$ and, hence, Proposition
  \ref{coronuovo} applies.  Therefore, substituting this inequality
  into \eqref{coar2} and using the layer cake theorem yields
  \begin{eqnarray*}
    \Phi^\eps_{c^\dag_\eps}(\bar u_\eps) & \geq & \frac{K c^\dag_\eps}{2}
    \int_0^{\frac12} \int_{ \{ h(\bar u_\eps) >t \} }
    e^{c^\dag_\eps z } dx \, dt - \int_{ \{ \bar u_\eps >
      \frac34\}}e^{c^\dag_\eps z}G(y,\bar u_\eps)dx 
    \\ & \geq & \int_{ \{ \bar u_\eps  > \frac34\}} e^{c^\dag_\eps z
    } \left( \frac12 K c^\dag_\eps h(\bar u_\eps)  - G(y,\bar u_\eps )\right) dx
    \\
    & \geq & \int_{ \{ \bar u_\eps  > \frac34\}} e^{c^\dag_\eps z
    } \left( \frac{3}{32} K c^\dag_\eps  - G(y,\bar u_\eps )\right) dx.
  \end{eqnarray*} 
  We then conclude that $\displaystyle c^\dag_\eps \leq M$, where
  $$
  M := {\frac{32}{3 K}} \max_{(y, u) \in \overline\Omega \times
    [{\frac34}, \frac32]} G(y, u) < + \infty,
  $$ 
  for if not, then from above we have $\Phi^\eps_{c^\dag_\eps}(\bar
  u_\eps )>0$, contradicting the conclusion of Theorem
  \ref{maximalspeedrd}(iv).
 \end{proof}
    
We now state an $\eps$-independent density estimate for minimizers of
$\Phi^\eps_c$.

\begin{Aproposition}\label{densityrd}
  Let $\eps$, $c_\eps^\dagger$ and $\ou_\eps$ be as in Theorem
  \ref{maximalspeedrd}.  Given $\delta\in (0,1)$ and $\bar{x}\in
  \overline\Sigma$, there exist $C, \bar r, \bar \eps>0$ depending
  only on $W$, $G$, $\Omega$ and $\delta$ such that, for every $\eps
  \in (0, \bar\eps)$ and $r\in (\eps, \bar r)$, there holds
  \begin{eqnarray}\label{dens}
    \ou_\eps(\bar x) \geq \delta & \Rightarrow &
    \int_{B(\bar{x}, r)\cap \Sigma} \ou_\eps^2\,dx 
    \geq C r^{n}, \\
    \label{densbis}
    \ou_\eps(\bar x) \leq 1 - \delta & \Rightarrow &
    \int_{B(\bar{x}, r)\cap \Sigma} (1-\ou_\eps)^2\,dx 
        \geq C r^{n}.
  \end{eqnarray}
\end{Aproposition}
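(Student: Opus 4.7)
The plan is to establish both density estimates in the spirit of De Giorgi--Caffarelli-C\'ordoba, splitting $r\in(\eps,\bar r)$ into a near-field regime $r\in(\eps,R_0\eps)$ handled by interior regularity, and a far-field regime $r\in(R_0\eps,\bar r)$ handled by a minimality-based competitor argument. In the near-field, the rescaling $v(\zeta):=\bar u_\eps(\bar x+\eps\zeta)$ transforms \eqref{twrd} into an elliptic PDE with $\eps$-independent bounded coefficients on bounded balls, with the Neumann boundary straightened via the $C^2$ regularity of $\p\Omega$. Standard Schauder estimates then give $\|v\|_{C^1(B_1)}\leq C$ uniformly, equivalently the Lipschitz bound $\|\nabla\bar u_\eps\|_{L^\infty(\Sigma)}\leq C/\eps$. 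If $\bar u_\eps(\bar x)\geq\delta$, then $\bar u_\eps\geq\delta/2$ on $B(\bar x,c_0\delta\eps)\cap\Sigma$, so $\int_{B(\bar x,r)\cap\Sigma}\bar u_\eps^2\,dx\geq Cr^n$ throughout $(\eps,R_0\eps)$ for a suitable $R_0=R_0(\delta)$.

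For the far-field I would exploit the minimality of $\bar u_\eps$ for $\Phi^\eps_{c_\eps^\dagger}$ (Theorem \ref{maximalspeedrd}) against perturbations compactly supported in $B(\bar x,r)\cap\Sigma$. Using the bound $c_\eps^\dagger\leq M$ of Proposition \ref{pmaxspeede} and choosing $\bar r$ so small that $M\bar r\leq 1$, the weight $e^{c_\eps^\dagger z}$ changes by at most a universal factor on any such ball, so $\bar u_\eps$ becomes a local quasi-minimizer of the unweighted Ginzburg--Landau functional $\int[\tfrac{\eps}{2}|\nabla u|^2+\eps^{-1}W(u)-G(y,u)]\,dx$. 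Rescaling to unit scale via $v(\zeta):=\bar u_\eps(\bar x+\eps\zeta)$ turns this into a quasi-minimality statement for a standard Allen--Cahn energy on a ball of radius $r/\eps\geq R_0$, carrying only a slowly varying weight and a small $O(\eps)$ forcing coming from $G$. The density estimates proven in the Appendix--extensions of \cite{cc,fv,nv} designed exactly to accommodate these perturbations--then yield $\int v^2\geq C(r/\eps)^n$, which unscales to \eqref{dens}.

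The estimate \eqref{densbis} is obtained by the symmetric argument applied to $v_\eps-\bar u_\eps$, where $v_\eps$ is the non-degenerate stable equilibrium with $v_\eps\to 1$ supplied by Theorem \ref{maximalspeedrd}(iii). By \eqref{pot}--\eqref{pit} and Assumption \ref{f}, the shifted potential $\eps^{-1}[W(u)-W(v_\eps)]-[G(y,u)-G(y,v_\eps)]$ is a non-degenerate double well with a local minimum near $v_\eps$, so the same Appendix density estimates apply verbatim. The principal obstacle is that none of \cite{cc,fv,nv} directly handles the combination of the exponential weight, the $y$-dependent forcing and the Neumann boundary condition; the Appendix is developed precisely to accommodate these features, via a Caccioppoli--plus--isoperimetric iteration on dyadic balls (with Proposition \ref{coronuovo} supplying the isoperimetric ingredient) together with a reflection treatment of the Neumann boundary enabled by the $C^2$ regularity of $\p\Omega$.
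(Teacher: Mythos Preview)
Your plan for \eqref{dens} is essentially the paper's: rescale by $\eps$, use interior elliptic regularity for the near-field, apply the Appendix density theorem (Theorem \ref{p:densL2}) for the far-field, and flatten-plus-reflect at the Neumann boundary. One minor correction: the Appendix does not use Proposition \ref{coronuovo}; the isoperimetric step there is Gagliardo--Nirenberg--Sobolev applied to $\phi(u)$ after the Modica--Mortola trick, and the exponential weight is absorbed into the variable coefficients $a(x),b(x)$ of Theorem \ref{p:densL2} rather than being frozen as you suggest.

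There is, however, a genuine gap in your treatment of \eqref{densbis}. You invoke $v_\eps\to 1$ and non-degeneracy of $v_\eps$, attributing both to Theorem \ref{maximalspeedrd}(iii); but that theorem only asserts that $v_\eps$ is a \emph{stable} critical point with $E^\eps(v_\eps)<0$. The uniform convergence $v_\eps\to 1$ is proved only later (Theorem \ref{gammaconvergence}(iii)), only under the stronger Assumption \ref{h6}, and its proof in turn \emph{uses} Proposition \ref{densityrd} (Step 3 of Theorem \ref{gammaconvergence}); so your argument is circular. Under Assumption \ref{h4} alone, $v_\eps$ may well converge to $\chi_\omega$ for a proper subset $\omega\subsetneq\Omega$, in which case $v_\eps-\bar u_\eps$ is not comparable to $1-\bar u_\eps$ and your shifted-potential construction does not control the quantity in \eqref{densbis}.

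The fix is immediate and is what the paper does: Theorem \ref{p:densL2} already contains the symmetric statement \eqref{eq:densL3} for $(1-u)^2$, obtained by the substitution $u\mapsto 1-u$, which preserves all the structural hypotheses on $W$ since $W(0)=W(1)=0$ and $W''(0),W''(1)>0$. Thus \eqref{densbis} follows from the same argument as \eqref{dens}, with no reference to $v_\eps$ whatsoever.
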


\begin{proof}
%
	
	%

  We only prove \eqref{dens}, since \eqref{densbis} follows
  analogously. For $\eps\in (0,\eps_0)$ we let $\tilde{u}_\eps(x):=
  \bar u_\eps(\eps x)\in C^{1}(\overline{\Sigma}_\eps)$, with
  $\Sigma_\eps=\eps^{-1}\Sigma$.  Then $\tilde{u}_\eps$ is a minimizer
  of the functional
  \begin{equation}\label{eni}
    \Phi_{c^\dag_\eps}(u; B(\bar x,\rho)) := \int_{\Sigma_\eps \cap
      B(\bar x,\rho)}
    e^{{\eps c_\eps^\dagger} z} \left( 
      \frac12|\nabla u|^2+W(u)-\eps G(\eps y,u)\right) dx
  \end{equation}
  for any $\rho > 0$, among functions $u \in C^1
  \big(\overline{\Sigma_\eps \cap B(\bar x, \rho)} \big)$ with fixed
  boundary data on $\partial B(\bar x,\rho) \cap \Sigma_\eps$.
  
  For $\rho > 2$, let us first consider the case in which $B(\bar x,
  \rho) \subset \Sigma_\eps$. Without loss of generality we may assume
  that $\bar{x}=0$.  By our assumptions and standard regularity theory
  \cite{gt}, there exists a constant $M \geq 1$ independent of $\eps$
  and $\rho$ such that
  \begin{equation}\label{eqlip}
    \|\nabla \tilde{u}_\eps\|_{L^\infty(B(0, \rho))}\le M.
  \end{equation}
  Recalling that $\tilde{u}_\eps(0) \geq \delta$, \eqref{eqlip}
  implies that
  \begin{equation}
    \label{lbgrad}
    \dashint_{B(0,R)}\tilde{u}_\eps^2\,dx \ge 
    \frac{1}{|B(0,R)|}\int_{B\left(0,\frac{\delta}{2M}\right)}\frac{\delta^2}{4}\,dx 
    =\frac{\delta^{n+2}}{2^{n+2} R^n M^n}
    \qquad \forall R \in [1,  \rho].
  \end{equation}
  We now note that in the considered case the functional in
  \eqref{eni} satisfies the assumptions of Theorem \ref{p:densL2},
  with all the constants independent of $\eps$ and $\rho$, as long as
  $\rho \leq \eps^{-1}$ and $\eps < \eps_1$, where $\eps_1$ is given
  by Proposition \ref{pmaxspeede}. Therefore, if $r_0 \geq 1$ is the
  integer independent of $\eps$ and $\rho$ defined in Theorem
  \ref{p:densL2}, and
  \begin{align}
    \label{alpha}
    \alpha := \frac{\delta^{n+2}}{2^{n+2} r_0^n M^n} ,
  \end{align}
  we have $0 < \alpha < r_0^{-n} \leq r_0^{1-n}$. Then by Theorem
  \ref{p:densL2} we obtain
  \begin{equation}\label{eqstick}
    \dashint_{B(0,R)}\tilde{u}_\eps^2\,dx \ge
    \alpha  \qquad {\rm for\ all\ } R \in \mathbb N \cap [
    r_0, R_0 ]\ {\rm and\ }\eps<\eps_1, 
  \end{equation}
  provided that $R_0 = \lfloor \eps^{-1} r_1 \rfloor$ satisfies $r_0 +
  1 \leq R_0 < \rho$ and
  \begin{align}
    \label{r1}
    r_1 < \frac{\alpha}{1+ \|G\|_{L^\infty(\Omega \times
        (0,\frac32))}}.
  \end{align}  
  Note that since $\alpha < 1$, this statement is non-empty for all
  $\eps < \eps_2(r_1)$, where $\eps_2(r_1) := \min ( \eps_1, r_1 (2
  + r_0)^{-1})$, and all $\rho$ satisfying $r_1 < \eps \rho \leq
  1$. Furthermore, since $R \geq 1$ in \eqref{eqstick}, extending that
  estimate to an interval yields
  \begin{align}
    \label{desired}
    \dashint_{B(0,R)}\tilde{u}_\eps^2\,dx \geq 2^{-n} \alpha \qquad
    \forall R \in [1, \eps^{-1} r_1],
  \end{align}
  where we also observed that by the definition of $\alpha$ and
  \eqref{lbgrad} the estimate in \eqref{eqstick} holds for all $R \in
  [1, r_0]$ as well.  By a rescaling and a translation, this then
  proves \eqref{dens} for all $\bar x \in \Sigma$ such that
  $\text{dist} (\bar x, \partial \Sigma) > r_1$, for every $r_1 > 0$
  satisfying \eqref{r1} and every $\eps \in (0, \eps_2(r_1))$.

  We now consider the case of $\bar x \in \overline \Sigma$ such that
  $\text{dist} (\bar x, \partial \Sigma) \leq r_1$, for $r_1 > 0$ to
  be fixed momentarily. Once again, assume without loss of generality
  that $\bar x = 0$, which implies that $\text{dist}(0, \partial
  \Sigma_\eps) \leq \eps^{-1} r_1$. Observe that since $\partial\Om$
  is bounded and of class $C^2$, there exists $r_1 > 0$ satisfying
  \eqref{r1} and a diffeomorphism $\phi_\eps:\R^n\to \R^n$ of class
  $C^2$ such that $\phi_\eps(B(0,\rho) \cap \Sigma_\eps)=B^+_\eps$,
  where $B^+_\eps$ denotes the intersection of the ball $B(0,\rho)$
  with the half-space $\{x\in\R^n:\,x_{n}>-{\rm
    dist}(0,\partial\Sigma_\eps)\}$, and $\rho = 2 \eps^{-1} r_1$.  In
  the new coordinate system the functional in \eqref{eni} becomes
  \begin{multline}
    \label{nn} \Phi_{c^\dag_\eps}(u; B(0,\rho)) = \int_{B_\eps^+}
    e^{{\eps c_\eps^\dagger} \hat z \cdot \phi_\eps^{-1}(x')} \Big(
    \,\frac12|D\phi_\eps(x')\nabla \tilde u|^2+W(\tilde u) \\
    -\eps G(\eps \phi_\eps^{-1}(x'), \tilde u) \Big) |\det
    D\phi_\eps(x')|^{-1} \, dx', \qquad
  \end{multline}
  where $\tilde u (x') := u(\phi_\eps^{-1}(x'))$. By a standard
  reflection argument, the minimizer $\tilde{u}_\eps\circ
  \phi_\eps^{-1}$ of \eqref{nn} can be extended from $B^+_\eps$ to a
  minimizer of an energy functional as in \eqref{nn}, but defined on
  the whole of $B(0,\rho)$. Then, since this energy functional still
  satisfies the assumptions of Theorem \ref{p:densL2}, we can repeat
  the arguments from the preceding part of the proof and, possibly
  reducing the value of $r_1$ to some $\bar r > 0$, establish the
  estimate in \eqref{desired} in this case as well, provided that
  $\eps < \bar\eps$ for some $\bar\eps \in (0, \eps_2(\bar r))$.
 \end{proof}

\section{Traveling waves in the sharp interface case} 
\label{sec:tws}

In this section we consider the front propagation problem in the
cylinder $\Sigma$, for the forced mean curvature flow
\eqref{mcf}. Also in this case, we are interested in traveling wave
solutions with positive speed, which are special solutions to the
forced mean curvature flow.

\begin{Adefinition}[Traveling waves] \label{def:tw1}A traveling wave
  for the forced mean curvature flow is a pair $(c,\psi)$, where $c>0$
  is the speed of the wave and the graph of the function $\psi \in
  C^2(\Omega) \cap C^1(\overline\Omega)$ is the profile of the wave,
  such that $h(y,t)= \psi(y)+ c t$ solves \eqref{fmc}.
\end{Adefinition}
\noindent Observe that to prove existence of a traveling wave solution
it is sufficient to determine $c>0$ such that the equation
\begin{equation}\label{equ1} 
  -\,\nabla \cdot \left(\frac{\nabla\psi
    }{\sqrt{1+|\nabla\psi|^2}}\right) =  \frac{1}{c_W}g(y)-   \frac{c
  }{\sqrt{1+|\nabla\psi |^2}}, \qquad y\in\Omega, 
\end{equation} 
with Neumann boundary condition ${\nu}\cdot \nabla\psi =0$ on
$\partial \Omega$, admits a classical solution. The graph of this
solution will be the profile of the traveling wave.

\subsection{Variational principle}

Following the variational approach proposed in \cite[Section 4]{mn2} 
and developed in \cite{cn} (see also \cite{bcn}) for the forced mean curvature
flow, for $c > 0$ we consider the family of functionals $F_c$ defined
in \eqref{Fcbar}.  Note that if $\psi$ is bounded and is a critical
point the functional $F_{{c}}$ then it is a solution to \eqref{equ1}.

After the change of variable $\zeta (y):= \frac{e^{c\psi(y)}}{c} \geq
0$, the functional $F_c$ is equivalent to
\begin{equation}\label{functionalv}
  G_c (\zeta)=\int_\Omega \Big( c_W \sqrt{c^2 \zeta^2(y)+
    |\nabla \zeta(y) |^2} -  g(y) \zeta(y) \Big) dy,
\end{equation}
in the sense that $F_c(\psi) = G_c(\zeta)$ for all $\zeta \in
C^1(\overline\Omega)$ \cite{cn}.  Since the functional $G_c$ is
naturally defined on $BV(\Omega)$ as the lower-semicontinuous
relaxation (see, e.g., \cite{amar08} and references therein), we
introduce the following generalization to the notion of a traveling
wave for \eqref{fmc}. We use the convention that $\ln 0 = -\infty$.

\begin{Adefinition}[Generalized traveling waves] \label{gentw} A
  generalized traveling wave for the forced mean curvature flow
  \eqref{fmc} is a pair $(c, \psi)$, where $c > 0$ is the speed of the
  wave, $\psi = {\frac 1 c} \ln c \zeta$ is the profile of the wave,
  and $\zeta \in BV(\Omega)$ is a non-negative critical point of
  $G_c$, not identically equal to zero.
\end{Adefinition} 

\noindent This definition is consistent with the earlier definition in
the following sense.  Defining $\omega \subseteq \Omega$ to be the
interior of the support of $\zeta$, again, by standard regularity of
minimizers of perimeter type functionals \cite{giustibook,maggi} we
have that $\psi$ solves \eqref{equ1} classically in $\omega$ with $\nu
\cdot \nabla \psi = 0$ on $\partial \Omega \cap \partial \omega$ and,
therefore, we have that $h(y, t) = \psi(y) + ct$ solves \eqref{fmc} in
$\omega$ with Neumann boundary conditions on $\partial \Omega
\cap \partial \omega$. In particular, if $\omega = \Omega$, then the
above definition implies that $(c, \psi)$ is a traveling wave in the
sense of Definition \ref{def:tw1}. In general, however, $\omega$ may
differ from $\Omega$ by a set of positive measure, in which case the
traveling wave profile $\psi$ obeys the following kind of boundary
condition:
\begin{equation}\label{singularbc}
  \lim_{y \to \bar y} \psi(y) = -\infty\qquad \forall \, \bar y
  \in \partial \omega \cap \Omega.
\end{equation} 
In this situation a generalized traveling wave may have the form of
one or several ``fingers'' invading the cylinder from left to right
with speed $c$.
 
The next proposition explains the relation between Assumption \ref{h4}
and the minimization problems associated with functionals $G_c$ and,
hence, $F_c$.
\begin{Aproposition} 
  \label{prop} Let Assumption \ref{h4} hold.  Then there exists a
  unique $c^\dagger> 0$ such that
  \begin{enumerate}[i)]
  \item $\frac{1}{c_W |\Omega|} \int_\Omega g \, dy \leq
    c^\dagger\leq \frac{1}{c_W} \sup_\Omega g$. 
  \item If $0<c<c^\dagger$, then $\inf \{G_c( \zeta)\ : \ \zeta
    \in BV(\Omega) , \ \zeta \geq 0 \}=-\infty$.
  \item If $c>c^\dagger$, then $\inf \{G_c(\zeta)\ : \ \zeta \in
    BV(\Omega) , \ \zeta \geq 0 \}=0$, and $G_c(\zeta)>0$ for every
    non-trivial $\zeta \geq 0$.
   \item If $c = c^\dag$, then there exists a non-trivial $\bar\zeta
     \geq 0$, with $\bar\zeta \in BV(\Omega)$, such that $G_c
     (\bar\zeta) = \inf \{G_c(\zeta)\ : \ \zeta \in BV(\Omega) ,
     \ \zeta \geq 0 \} =0$.
  \end{enumerate}
\end{Aproposition}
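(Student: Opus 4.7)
The core observation is that $G_c$ is positively $1$-homogeneous on the cone of nonnegative functions: $G_c(\lambda\zeta)=\lambda G_c(\zeta)$ for every $\lambda\ge 0$. This forces a dichotomy: either $G_c\ge 0$ on the cone and $\inf G_c=0$ (attained at $\zeta=0$), or there is a nontrivial $\zeta_0\ge 0$ with $G_c(\zeta_0)<0$, in which case $G_c(\lambda\zeta_0)\to -\infty$ as $\lambda\to+\infty$ and $\inf G_c=-\infty$. Moreover, since the integrand $c_W\sqrt{c^2\zeta^2+|\nabla\zeta|^2}$ is monotone in $c$ for fixed $\zeta\ge 0$ (and strictly so wherever $\zeta\not\equiv 0$), the set $\{c>0:\inf G_c\ge 0\}$ is upward closed. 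The plan is to define
$$c^\dag:=\inf\{c>0:\inf_{\zeta\ge 0} G_c(\zeta)\ge 0\},$$
so that items (ii) and (iii) become essentially formal consequences of this dichotomy together with the monotonicity, while (i) is proved by testing on explicit competitors and (iv) by a direct-method compactness argument as $c\uparrow c^\dag$.

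For item (i), the upper bound follows from the pointwise inequality $c_W\sqrt{c^2\zeta^2+|\nabla\zeta|^2}\ge c_W c\,\zeta$, which for any $c>c_W^{-1}\sup_\Omega g$ gives $G_c(\zeta)>0$ on every nontrivial $\zeta\ge 0$ and hence $c^\dag\le c_W^{-1}\sup_\Omega g$. The lower bound is obtained by testing with $\zeta\equiv 1$: $G_c(1)=c_W c|\Omega|-\int_\Omega g\,dy$ is negative whenever $c<\frac{1}{c_W|\Omega|}\int_\Omega g\,dy$, yielding the stated inequality (vacuous if $\int_\Omega g\le 0$). To verify $c^\dag>0$ I test with $\zeta=\chi_A$ for the set $A$ provided by Assumption \ref{h4}; using the standard BV relaxation,
$$G_c(\chi_A)=c_W c|A|+c_W\Per(A,\Omega)-\int_A g\,dy,$$
and \eqref{gcondition} makes this strictly negative for $c$ small enough, so indeed $c^\dag>0$.

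Items (ii) and (iii) now follow quickly. For (ii), $c<c^\dag$ means by definition of the infimum that $\inf G_c<0$, and homogeneity upgrades this to $\inf G_c=-\infty$. For (iii), $c>c^\dag$ gives $\inf G_c\ge 0$, and $G_c(0)=0$ forces $\inf G_c=0$; strict positivity for nontrivial $\zeta\ge 0$ is obtained by picking $c'\in(c^\dag,c)$ and using strict monotonicity: $G_c(\zeta)>G_{c'}(\zeta)\ge \inf G_{c'}\ge 0$.

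Item (iv) is the main technical step. First I check that $c^\dag$ itself sits in the good regime: for any fixed $\zeta\in BV(\Omega)$ with $\zeta\ge 0$, continuity of $c\mapsto G_c(\zeta)$ (by monotone convergence applied to the integrand) together with $G_c(\zeta)\ge 0$ for $c>c^\dag$ yields $G_{c^\dag}(\zeta)\ge 0$, hence $\inf G_{c^\dag}\ge 0$. To produce a nontrivial minimizer I apply the direct method along a sequence $c_k\nearrow c^\dag$. For each $k$, $c_k<c^\dag$ guarantees some nontrivial $\zeta_k\ge 0$ with $G_{c_k}(\zeta_k)<0$, and $1$-homogeneity lets me rescale so that $\|\zeta_k\|_{L^1(\Omega)}=1$, preserving negativity. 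The pointwise bound $c_W|\nabla\zeta_k|\le c_W\sqrt{c_k^2\zeta_k^2+|\nabla\zeta_k|^2}$ combined with $G_{c_k}(\zeta_k)<0$ produces a uniform total-variation estimate $\int_\Omega|\nabla\zeta_k|\,dy\le \|g\|_\infty$, so by BV compactness I extract an $L^1$-convergent subsequence $\zeta_k\to\bar\zeta$ with $\bar\zeta\ge 0$ and $\|\bar\zeta\|_{L^1}=1$ (in particular $\bar\zeta$ is nontrivial). Lower semicontinuity then gives $G_{c^\dag}(\bar\zeta)\le\liminf_k G_{c_k}(\zeta_k)\le 0$, and combined with $\inf G_{c^\dag}\ge 0$ this forces $G_{c^\dag}(\bar\zeta)=0$. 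The main obstacle is precisely this joint lower-semicontinuity statement: the integrand is a $1$-homogeneous convex function of $(\zeta,\nabla\zeta)$ whose BV extension must simultaneously account for the absolutely continuous and singular parts of $D\zeta$, and is most cleanly handled via the dual formulation analogous to \eqref{perc}, in the spirit of \cite{amar08}.
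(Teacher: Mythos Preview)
Your argument is correct. The paper itself gives no self-contained proof of this proposition, deferring entirely to \cite[Proposition~3.1 and Corollary~3.2]{cn} and \cite[Proposition~4.1]{mn2}; your approach via positive $1$-homogeneity of $G_c$, monotonicity in $c$, and a direct-method compactness argument along $c_k\nearrow c^\dag$ is exactly the standard route taken in those references, so there is nothing substantive to contrast.

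Two minor remarks. First, the total-variation bound should read $c_W|D\zeta_k|(\Omega)\le\|g\|_\infty$, i.e.\ $|D\zeta_k|(\Omega)\le\|g\|_\infty/c_W$; the missing $c_W$ is inconsequential. Second, the joint lower semicontinuity in (iv) can also be obtained without appealing to the dual formulation: since $c_k$ is increasing, for each fixed $j$ and all $k\ge j$ one has $G_{c_k}(\zeta_k)\ge G_{c_j}(\zeta_k)$, whence $\liminf_k G_{c_k}(\zeta_k)\ge G_{c_j}(\bar\zeta)$ by lower semicontinuity of the fixed functional $G_{c_j}$, and then $G_{c_j}(\bar\zeta)\uparrow G_{c^\dag}(\bar\zeta)$ by monotone convergence. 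This sidesteps the relaxation issue you flag at the end.
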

  
\begin{proof} The result follows from \cite[Proposition 3.1 and
  Corollary 3.2]{cn} (see also \cite[Proposition 4.1]{mn2}).
 \end{proof}  

Note that the same argument as in \cite[Proposition 3.4, Lemma
3.5]{cn} gives that for all $ \zeta \geq 0 $ such that $ \zeta \in
BV(\Omega)$ we have
\begin{equation}\label{fcpc}
   G_{ c}( \zeta ) =  \mathcal F_{ c}(S_\psi) 
\end{equation}
where $S_\psi=\{(y,z)\in \Omega\times \R\ : \ z< \psi(y)\}$ is
the subgraph of $\psi = {\frac 1 c} \ln c \zeta $.  Moreover if $
\zeta \geq 0 $ is a non trivial minimizer of $G_{c}$, then the
subgraph $S_\psi$ of $\psi$ is a minimizer, under compact
perturbations, of the functional $\mathcal F_c$ defined in
\eqref{geo}. This can be proved as in \cite[Theorem 14.9]{giustibook},
for more details see \cite[Lemma 3.5]{cn}.

We now prove a density estimate for minimizers under compact
perturbations of the functional $\mathcal{F}_c$, which will be useful
in the sequel. Note that related density estimates for the level sets
of minimizers of Allen-Cahn type functionals are proved in Appendix
\ref{sec:appendix}.  Throughout the rest of this section, a set of
locally finite perimeter is identified with its measure theoretic
interior (see \cite{afp}).

\begin{Alemma} \label{den} Given $\bar c > 0$, there exist $r_0>0$ and
  $\lambda>0$ such that for all minimizers $S$ of $\mathcal F_c$ under
  compact perturbations, with $c\in(0,\bar c]$, and for all $\bar x
  \in \overline S$, all $\bar x' \in \overline{\Sigma \backslash S}$
  and all $r\in (0,r_0)$ the following density estimates hold:
  \begin{eqnarray}\label{eqdens}
    |S\cap B (\bar x,r)|&\ge& \lambda\, r^{n},
    \\ \label{eqdens2}
    |(\Sigma \setminus S)\cap B (\bar x',r)|&\ge& \lambda\,
    r^{n}. 
  \end{eqnarray}
  Furthermore, we have $S\subset\Omega\times (-\infty,M]$ for some
  $M\in\R$.
\end{Alemma}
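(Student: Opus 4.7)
The plan is to establish the two density estimates by a standard De Giorgi--Almgren comparison argument, and then to derive the upper bound in $z$ by combining the lower density \eqref{eqdens} with finiteness of the weighted volume $\int_S e^{cz}\,dx$ (which holds because $\mathcal F_c(S)<\infty$).

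For \eqref{eqdens}, I fix $\bar x\in\overline S$ and take $S_r:=S\setminus B(\bar x,r)$ as competitor; since $S_r$ differs from $S$ only on the bounded set $B(\bar x,r)\cap\Sigma$, it is an admissible compact perturbation. Minimality $\mathcal F_c(S)\le\mathcal F_c(S_r)$ together with the identity
\begin{equation*}
\Per_c(S,\Sigma)-\Per_c(S_r,\Sigma)=\int_{\partial^* S\cap B(\bar x,r)\cap\Sigma}e^{cz}\,d\mathcal H^{n-1}-\int_{\partial B(\bar x,r)\cap S\cap\Sigma}e^{cz}\,d\mathcal H^{n-1}
\end{equation*}
yields, after using that $e^{cz}$ is comparable to a constant on $B(\bar x,r)$ for $r\le 1$ and $c\le\bar c$,
\begin{equation*}
\mathcal H^{n-1}\bigl(\partial^* S\cap B(\bar x,r)\cap\Sigma\bigr)\le C_1\,\mathcal H^{n-1}\bigl(\partial B(\bar x,r)\cap S\cap\Sigma\bigr)+C_2\,m(r),
\end{equation*}
where $m(r):=|S\cap B(\bar x,r)|$ and $C_1,C_2$ depend only on $\bar c$ and $\|g\|_\infty$. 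Adding the spherical contribution to both sides, applying the relative isoperimetric inequality in $B(\bar x,r)\cap\Sigma$, and using the co-area identity $m'(r)=\mathcal H^{n-1}(S\cap\partial B(\bar x,r)\cap\Sigma)$ (for a.e.\ $r$) produces the ODI
\begin{equation*}
m(r)^{(n-1)/n}\le C_3\bigl(m'(r)+m(r)\bigr).
\end{equation*}
Since $m(r)\le\omega_n r^n$, one has $m(r)\le\omega_n^{1/n}r\,m(r)^{(n-1)/n}$, so for $r\le r_0$ with $r_0$ sufficiently small the lower-order term is absorbed and integration of $m(r)^{(n-1)/n}\le 2C_3 m'(r)$ gives $m(r)\ge\lambda r^n$. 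The symmetric choice $S'=S\cup B(\bar x',r)$ yields \eqref{eqdens2} in the same way.

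For the final claim, suppose by contradiction that $S$ is unbounded above. Then I can choose $\bar x_n=(y_n,z_n)\in\overline S$ with $z_{n+1}\ge z_n+2r_0$ and $z_n\to+\infty$, making the balls $B(\bar x_n,r_0)$ pairwise disjoint. Applying \eqref{eqdens} at each $\bar x_n$ gives
\begin{equation*}
\int_S e^{cz}\,dx\ge\sum_n e^{c(z_n-r_0)}\,|S\cap B(\bar x_n,r_0)|\ge\lambda r_0^n\sum_n e^{c(z_n-r_0)}=+\infty,
\end{equation*}
contradicting $\mathcal F_c(S)<\infty$; hence $S\subset\Omega\times(-\infty,M]$ for some $M\in\R$.

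The main technical obstacle is the relative isoperimetric inequality with a uniform constant when $\bar x$ lies near or on $\partial\Sigma$, since then $B(\bar x,r)\cap\Sigma$ is not a ball. Because $\partial\Omega$ is of class $C^2$, for $r$ small enough $B(\bar x,r)\cap\Sigma$ can be flattened by a local $C^2$ diffeomorphism to a set uniformly close to a half-ball (exactly as in the proof of Proposition \ref{densityrd}), and the relative isoperimetric inequality then holds with a constant depending only on the $C^2$ character of $\partial\Omega$. This is the only place where the regularity of $\partial\Omega$ and the restriction $r<r_0$ are genuinely used.
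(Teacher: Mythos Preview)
Your argument is correct and follows essentially the same De Giorgi comparison and differential-inequality scheme as the paper. The one technical difference is that the paper avoids your flattening step near $\partial\Sigma$ by translating so that $\bar x=(\bar y,0)$ and then applying the relative isoperimetric inequality once and for all in the fixed domain $\Sigma_0=\Omega\times(-1,1)$ (whose constant depends only on $\Omega$), working directly with the weighted volume $U(r)=\int_{S\cap B(\bar x,r)}e^{cz}\,dx$; this yields the uniform ODI $U'(r)\ge C\,U(r)^{(n-1)/n}$ without any local diffeomorphism.
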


\begin{proof} 
  Let $S$ be a minimizer of $\mathcal F_c$ under compact
  perturbations, $\bar x \in \overline S$ and $r>0$. Up to a
  translation in the $z$-direction, we may assume $\bar x =(\overline
  y ,0)$ for some $\bar y \in \overline \Omega$.  By minimality of $S$
  we have
  \[
  \Per_c (S, \Sigma)-\frac{1}{c_W}\int_{S}e^{cz} g(y)dx \leq
  \Per_c(S\setminus B(\bar x ,r),\Sigma) -\frac{1}{c_W}\int_{
    S\setminus B(\bar x ,r)}e^{cz} g(y)dx.
  \] 
  Therefore, letting $S_r=S \cap B(\bar x ,r)$, we obtain
  \[
  \Per_c(S ,B(\bar x ,r)\cap \Sigma)\leq \int_{\partial B(\bar x
    ,r)\cap S} e^{cz} d\mathcal{H}^{n-1} (x) + \frac{1}{c_W}\int_{S_r}
  e^{cz} g(y) dx.
  \] 
  On the other hand, we have
  \[ 
  \Per_c(S_r,\Sigma)= \Per_c(S ,B(\bar x ,r)\cap
  \Sigma)+\int_{\partial B(\bar x ,r)\cap S} e^{cz} d\mathcal{H}^{n-1}(x),
  \] 
  and hence
  \begin{eqnarray}\nonumber
    \Per_c(S_r,\Sigma)
    &\leq& 2 \int_{\partial B(\bar x ,r)\cap S} e^{cz}
    d\mathcal{H}^{n-1} (x) +
    \frac{1}{c_W}\int_{S_r} e^{cz} g(y) dx 
    \\
    &\leq& 2 \frac{\partial}{\partial r}\left(\int_{S_r} e^{cz}dx\right)
    + \frac{\|g\|_\infty}{c_W}   \int_{S_r} e^{cz}   dx. \label{e}
  \end{eqnarray}   
  Recalling that $\bar x =(\bar y ,0)$, by the relative isoperimetric
  inequality in $\Sigma_0 := \Omega \times (-1 , 1)$ \cite{afp}, there
  exists $C>0$ such that for all $r_0 \leq 1$ and all $r\in (0,r_0)$
  we have
  \begin{eqnarray} \label{iso}
    \Per_c(S_r,\Sigma)&= & \Per_c(S_r,\Sigma_0) \nonumber \\
    & \geq & e^{-\bar c r_0} \Per(S_r,\Sigma_0) \nonumber
    \\
    &\geq& e^{-\bar c r_0} C_{\Sigma_0}|S_r|^{\frac{n-1}{n}} \nonumber \\
    &\geq& C \left(\int_{S_r} e^{cz} dx\right)^{\frac{n-1}{n}}
  \end{eqnarray} 
  for some $C > 0$ depending only on $\bar c$ and $\Omega$.  Let now
  $U(r):=\int_{S_r} e^{cz} dx$, and notice that $\lim_{r\to 0} U(r)=0$
  and that $0 < U(r) \leq e^{\bar c r_0} |S_r|$ for all $r \in
  (0,r_0)$ by our choice of $\bar x$.  {From} \eqref{e} and
  \eqref{iso}, we then get
  \begin{equation} \label{isa} \frac{d U}{dr}(r) \geq C
    U(r)^\frac{n-1}n \qquad {\rm for\ a.e.\ }r\in (0,r_0),
  \end{equation}
  for some $r_0 > 0$ and $C > 0$ depending only on $\bar c$,
  $\|g\|_\infty$, $c_W$ and $\Omega$.

  Estimate \eqref{eqdens} follows from \eqref{isa} by
  integration. Estimate \eqref{eqdens2} follows by the same argument,
  working with $\Sigma \backslash S$ instead of $S$. Finally, the fact
  that $S\subset\Omega\times (-\infty,M]$ for some $M\in\R$ follows
  directly from \eqref{eqdens} and the volume bound $\int_S
  e^{cz}\,dx<+\infty$.
 \end{proof} 

\begin{Aremark} \label{remden}\upshape As a consequence of Lemma
  \ref{den} and the regularity theory for almost minimal surfaces with
  free boundaries (see \cite{gr1,gr2,gr3,giustibook}),
  $\overline{\partial S\cap \Sigma}$ is a hypersurface of class $C^1$
  out of a closed singular set $\Xi_0 \subset\overline \Sigma$ of
  Hausdorff dimension at most $n-8$, and $(\partial S \cap \Sigma)
  \backslash \Xi_0$ is of class $C^2$.
  In
  particular, in the physically relevant case $n \leq 3$ the
  hypersurface $\partial S \cap \Sigma$ is of class $C^2$ uniformly in
  $K \times \mathbb R$, for any $K \subset \Omega$ compact (see also
  \cite{taylor}).
\end{Aremark}

In the sequel we need the following lemma, based on the rearrangement
argument in the proof of \cite[Lemma 3.5]{cn}.

\begin{Alemma}
  \label{lrearr}
  Let $c > 0$ and let $S \subset \Sigma$ with $\int_S e^{cz} dx \in
  (0, \infty)$. Then there exists a set $S_\psi = \{ (y, z) \in \Sigma
  : z < \psi(y) \}$ such that $e^{c \psi} \in BV(\Omega)$ and
  \begin{align}
    \label{FcFcpsi}
    \mathcal F_c(S_\psi) \leq \mathcal F_c(S),
  \end{align}
  with strict inequality if $S\not\equiv S_\psi$.
\end{Alemma}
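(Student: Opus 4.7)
The plan is to perform a vertical rearrangement of $S$ along the cylinder axis, moving the $e^{cz}$-mass of each slice to the bottom. For each $y \in \Omega$, set $S_y := \{z \in \mathbb{R} : (y,z) \in S\}$ and define $\psi : \Omega \to [-\infty, +\infty)$ implicitly by $e^{c\psi(y)} := c \int_{S_y} e^{cz} dz$, so that the subgraph $S_\psi := \{(y,z) \in \Sigma : z < \psi(y)\}$ has, by construction, the same $e^{cz}$-mass on every vertical slice as $S$. Writing $\zeta(y) := e^{c\psi(y)}/c \geq 0$, Fubini immediately gives $\int_\Omega \zeta\, dy = \int_S e^{cz} dx < \infty$ and the equality of the bulk terms
\[
\int_S e^{cz} g(y)\, dx = \int_\Omega g(y)\,\zeta(y)\, dy = \int_{S_\psi} e^{cz} g(y)\, dx,
\]
so the lemma reduces to the perimeter comparison $\Per_c(S_\psi, \Sigma) \leq \Per_c(S, \Sigma)$ together with $\zeta \in BV(\Omega)$.

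For the perimeter inequality I would test the variational definition \eqref{perc} of $\Per_c(S, \Sigma)$ against vector fields whose coefficients depend only on $y$, with a cutoff in $z$. Given $\phi_1 \in C^1_c(\Omega; \mathbb{R}^{n-1})$ and $\phi_2 \in C^1_c(\Omega; \mathbb{R})$ with $|\phi_1(y)|^2 + \phi_2(y)^2 \leq 1$, and a smooth cutoff $\chi_R(z) := \chi(z/R)$ with $\chi = 1$ on $[-1,1]$ and $\chi = 0$ off $[-2,2]$, substitute $\Phi_R(y,z) := \chi_R(z)(\phi_1(y), \phi_2(y))$ into \eqref{perc}. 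The contribution of $\chi_R'$ is bounded by a constant times $R^{-1} \int_{S \cap \{R \leq |z| \leq 2R\}} e^{cz}\, dx$ and vanishes as $R \to \infty$ by the integrability assumption. After Fubini, the limit reads
\[
\Per_c(S, \Sigma) \geq \int_\Omega \zeta(y)\, \nabla_y \cdot \phi_1(y)\, dy + c \int_\Omega \phi_2(y)\, \zeta(y)\, dy.
\]
Taking $\phi_2 \equiv 0$ and the sup over $\phi_1$ shows $\zeta \in BV(\Omega)$ with $|D\zeta|(\Omega) \leq \Per_c(S, \Sigma)$; the full supremum over admissible pairs $(\phi_1, \phi_2)$ then yields the relaxed total variation of the $\mathbb{R}^n$-valued measure $(D\zeta, c\zeta \mathcal{L}^{n-1})$ on $\Omega$, which by the identification \eqref{functionalv}--\eqref{fcpc} is exactly $\Per_c(S_\psi, \Sigma)$.

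Strict inequality when $S \not\equiv S_\psi$ follows from slicewise one-dimensional rearrangement: equality in the perimeter comparison would force each slice $S_y$ to coincide with $(-\infty, \psi(y))$ up to an $\mathcal{L}^1$-null set, giving $S = S_\psi$ almost everywhere in $\Sigma$ (the case $\Per_c(S,\Sigma) = +\infty$ is trivial, by taking for instance $\psi \equiv \mathrm{const}$). The delicate step is the BV-relaxation at the end, namely converting the supremum over smooth test vector fields into the geometric weighted perimeter of the subgraph, in the $\sqrt{c^2 \zeta^2 + |D\zeta|^2}$ form of \eqref{functionalv}. This is essentially the rearrangement argument in \cite[Lemma 3.5]{cn} (which follows \cite[Theorem 14.9]{giustibook}), now applied with the $z$-cutoff above to make it work for an arbitrary measurable set $S$ rather than only a subgraph.
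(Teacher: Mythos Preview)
Your approach to the inequality \eqref{FcFcpsi} is essentially the same as the paper's: define $\psi$ by pushing the $e^{cz}$-mass of each slice down, note that the bulk terms agree, and test \eqref{perc} against vector fields of the form $(\phi_1(y),\phi_2(y))$ times a $z$-cutoff to obtain $\Per_c(S,\Sigma)\ge \sup_{|\phi_1|^2+\phi_2^2\le 1}\int_\Omega(\zeta\,\nabla\!\cdot\phi_1+c\phi_2\zeta)\,dy$. Identifying this supremum with $\Per_c(S_\psi,\Sigma)$ via \eqref{fcpc} is legitimate; the paper instead reproves this identification in place by approximating $\psi$ with smooth $\psi_\eps$ and using lower semicontinuity of $\Per_c$, but the two routes are equivalent. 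The $BV$ bound on $\zeta$ and the handling of the infinite-perimeter case are fine (though your parenthetical ``taking for instance $\psi\equiv\mathrm{const}$'' is misleading: $\psi$ is already fixed by the rearrangement, and what you need is that $\zeta\in BV(\Omega)$ forces $\Per_c(S_\psi,\Sigma)<\infty$, which you have).

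The genuine gap is the strict inequality. Your claim that ``equality in the perimeter comparison would force each slice $S_y$ to coincide with $(-\infty,\psi(y))$'' is not a proof: the inequality $\Per_c(S,\Sigma)\ge\Per_c(S_\psi,\Sigma)$ was obtained by restricting the admissible test fields to those with $y$-dependent coefficients, and equality only says the full supremum equals the restricted one. Nothing slicewise has been established, and the weighted perimeter of $S$ is not the integral over $y$ of one-dimensional slice perimeters. The paper's argument is different and explicit: assuming $\Per_c(S,\Sigma)<\infty$, it applies the Gauss--Green theorem to write $\int_S e^{cz}(\nabla\!\cdot\phi+c\hat z\cdot\phi)\,dx=\int_{\partial^*S}e^{cz}\phi\cdot\nu\,d\mathcal H^{n-1}$, and then observes that if $S\not\equiv S_\psi$ there must exist points of $\partial^*S$ where the outer normal satisfies $\nu\cdot\hat z<0$. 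Restricting to $\chi\ge 0$ (which does not change the supremum on the $S_\psi$ side) and exploiting $|\phi\cdot\nu|\le\sqrt{1-\eps^2}$ on $\{\nu\cdot\hat z<-\eps\}$ then yields a strict gap between $\Per_c(S_\psi,\Sigma)$ and $\int_{\partial^*S}e^{cz}\,d\mathcal H^{n-1}=\Per_c(S,\Sigma)$. You should either supply this geometric step or an equivalent one; the slicewise assertion as written does not close the argument.
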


\begin{proof}
  We begin by defining $\psi : \Omega \to [-\infty, \infty)$ as
  \begin{align}
    \label{psiS}
    \psi(y) := \frac 1 c \ln \left( c \int_{S^y} e^{cz} dz \right)
    \qquad \text{for a.e.} \ y \in \Omega,
  \end{align}
  where $S^y := \{ z \in \R : (y, z) \in S \}$ and, as usual, we use
  the convention that $\ln 0 = -\infty$. Notice that if $S_\psi :=
  \{(y, z) \in \Sigma : z < \psi(y) \}$, then by construction $\int_S
  e^{cz} dx = \int_{S_\psi} e^{cz} dx$ and
  \begin{align}
    \label{Spsig}
    \int_S e^{cz} g(y) \, dx = \int_{S_\psi} e^{cz} g(y) \, dx =
    \frac 1 c \int_\Omega e^{c \psi(y)} g(y) \, dy.
  \end{align}
  Now, testing \eqref{perc} with $\phi(y, z) := (\tilde \phi(y) +
  \chi(y) \hat z) \eta_\delta(y, z)$, where $\tilde \phi \in
  C^1_c(\Omega; \R^{n-1})$, $\chi \in C^1_c(\Omega)$, $\delta > 0$ and
  $\eta_\delta$ is as in Proposition \ref{coronuovo}, we have for
  small enough $\delta$ depending on $\tilde\phi$ and $\chi$:
  \begin{multline}
    \int_S e^{cz} (\nabla \cdot \phi + c \hat z \cdot \phi) dx \geq
    \int_\Omega \int_{S^y \cap (-\delta^{-1}, \delta^{-1})} e^{cz} (
    \nabla \cdot \tilde\phi + c \chi) dz \, dy
    \\
    - \int_{S \backslash (\Omega \times (-\delta^{-1}, \delta^{-1}))}
    e^{cz} ( |\nabla \cdot \tilde \phi| + \delta \,
    \eta'(\delta |z|) + c |\chi| ) dx \\
    \geq \int_\Omega \int_{S^y} e^{cz} ( \nabla \cdot \tilde\phi + c
    \chi) dz \, dy - C \int_{S \backslash (\Omega \times
      (-\delta^{-1}, \delta^{-1}))}
    e^{cz} dx \\
    = \int_{S_\psi} e^{cz} ( \nabla \cdot \tilde\phi + c \chi) dx - C
    \int_{S \backslash ( \Omega \times (-\delta^{-1}, \delta^{-1}))}
    e^{cz} dx, \label{perceta}
  \end{multline}
  for some $C > 0$ independent of $\delta$. Observing that the last
  term in the last line of \eqref{perceta} vanishes as $\delta \to 0$,
  we obtain
  \begin{align}
    \label{percphy}
    \Per_c(S, \Sigma) \geq \limsup_{\delta \to 0} \int_S e^{cz}
    (\nabla \cdot \phi + c \hat z \cdot \phi) dx \geq\int_{S_\psi}
    e^{cz} ( \nabla \cdot \tilde\phi + c \chi) dx.
  \end{align}
  In particular, since 
  \begin{align}
    \int_{S_\psi} e^{cz} ( \nabla \cdot \tilde\phi + c \chi) dx =
    \frac{1}{c} \int_\Omega e^{c \psi} (\nabla \cdot \tilde\phi + c
    \chi) d y\,,
  \end{align}
  this implies that $e^{c\psi} \in BV(\Omega)$.

  We claim that taking the supremum in \eqref{percphy} over all
  $\tilde \phi$ and $\chi$ satisfying $|\tilde \phi|^2 + \chi^2 \leq
  1$ yields $\Per_c(S_\psi, \Sigma)$ (for similar arguments, see
  \cite[Theorem 14.6]{giustibook}). Indeed, from \eqref{perceta} with
  $S$ replaced by $S_\psi$ we obtain, after sending $\delta \to 0$ and
  then taking the supremum over all $\tilde\phi$, that
  \begin{eqnarray}\nonumber
    \Per_c(S_\psi, \Sigma) &\geq& \sup_{|\tilde\phi|^2 + \chi^2
      \leq 1 } \int_{S_\psi} 
    e^{cz} ( \nabla \cdot \tilde\phi + c \chi)
    dx 
    \\
    &=:&  \int_\Omega e^{c \psi} \sqrt{ 1 + |\nabla \psi|^2} \, dy\,.
    \qquad \label{perceta2}
  \end{eqnarray}
  We now approximate $\psi$ by smooth functions
  $\psi_\eps$ such that 
  $$
  \lim_{\eps\to 0}\int_\Omega e^{c \psi_\eps}\, dy = \int_\Omega e^{c
    \psi}\,dy
  $$
  and
  $$
  \lim_{\eps\to 0}\int_\Omega e^{c \psi_\eps} \sqrt{ 1 + |\nabla
    \psi_\eps|^2} \, dy = \int_\Omega e^{c \psi} \sqrt{ 1 + |\nabla
    \psi|^2} \, dy\,.
  $$ 
  By the lower semicontinuity of the perimeter functional $\Per_c$, we
  obtain
  \begin{multline}
    \Per_c(S_\psi, \Sigma) \leq \liminf_{\eps \to 0} \,
    \Per_c(S_{\psi_\eps}, \Sigma) \\
    = \liminf_{\eps \to 0} \sup_{\stackrel{\phi \in C^1_c(\Sigma;
        \R^n)}{|\phi| \leq 1}} \int_{S_{\psi_\eps}} e^{cz}
    (\nabla \cdot\phi + c \hat z \cdot
    \phi ) \, dx \\
    = \liminf_{\eps \to 0} \sup_{\stackrel{\phi \in C^1_c(\Sigma;
        \R^n)}{|\phi| \leq 1}} \int_{\partial S_{\psi_\eps}} e^{c
        z} \, \phi \cdot \nu \ d \mathcal H^{n-1}(x),
    \label{perceta3}
  \end{multline}
  where $\nu$ is the normal to $\partial S_{\psi_\eps}$ pointing out
  of $S_{\psi_\eps}$, and we used Gauss-Green theorem to arrive at the
  last line. From this we obtain
  \begin{multline}
    \Per_c(S_\psi, \Sigma) \leq \liminf_{\eps \to 0} 
    \int_{\partial S_{\psi_\eps}} e^{cz} d \mathcal H^{n-1}(x) \\
    = \lim_{\eps \to 0} \int_\Omega e^{c \psi_\eps} \sqrt{ 1 + |\nabla
      \psi_\eps|^2} \, dy = \int_\Omega e^{c \psi} \sqrt{ 1 + |\nabla
      \psi|^2} \, dy\,.
  \end{multline} 
  Therefore, we have, in fact, an equality in \eqref{perceta2}, and
  \eqref{FcFcpsi} then follows by combining
  \eqref{percphy} with \eqref{Spsig}.
  
  Finally we show that the inequality in \eqref{FcFcpsi} is strict if
  $S \not\equiv S_\psi$.
  By Gauss-Green theorem we have
  \begin{multline}
    \label{gsgre}
    \int_S e^{cz} (\nabla \cdot\phi + c \hat z \cdot \phi ) dx =
    \int_{\partial^* S} e^{cz} \phi \cdot \nu \, d \mathcal H^{n-1}(x)
    \\
    \leq \int_{\partial^* S \cap \{ \nu \cdot \hat z \geq -\eps\} }
    e^{cz} \, d \mathcal H^{n-1}(x) + 
    \int_{\partial^* S \cap \{ \nu  \cdot \hat
      z < -\eps\} } e^{cz} \phi \cdot \nu \, d \mathcal H^{n-1}(x),
  \end{multline}
  where $\phi$ is as before, $\nu$ is the unit normal vector to
  $\partial^* S$ pointing out of $S$ and $\eps > 0$ is arbitrary.
  Therefore, for $\chi \geq 0$ we can write
  \begin{multline}
    \int_{S_\psi} e^{cz} ( \nabla \cdot \tilde\phi + c \chi) dx
    \leq
    \limsup_{\delta \to 0} \int_S e^{cz} (\nabla \cdot\phi + c \hat z
    \cdot \phi ) dx \\ 
    \leq \int_{\partial^* S \cap \{ \nu \cdot \hat z \geq -\eps\} }
    e^{cz} \, d \mathcal H^{n-1}(x) + \sqrt{1 - \eps^2}
    \int_{\partial^* S \cap \{ \nu \cdot \hat z < -\eps\} } e^{cz}
    \, d \mathcal H^{n-1}(x) \\
    \leq \int_{\partial^* S} e^{cz} \, d \mathcal H^{n-1}(x) -
    \frac{\eps^2}{2} \int_{\partial^* S \cap \{ \nu \cdot \hat z <
      -\eps\} } e^{cz} \, d \mathcal H^{n-1}(x),
    \label{gsgre2}
  \end{multline}
  where we also used \eqref{percphy}.  We now take the supremum of the
  left-hand side in \eqref{gsgre2} over all $\tilde \phi$ and $\chi$,
  noting that we can restrict $\chi$ to non-negative functions without
  affecting the value of the supremum. Then, reasoning as in the first
  part of the proof, we conclude that the left-hand side of
  \eqref{gsgre2} converges to $\Per_c(S_\psi, \Sigma)$. On the other
  hand, if $S_\psi \not\equiv S$ there exists $\eps > 0$ such that the
  last integral in \eqref{gsgre2} is strictly positive, implying that
  $$
  \Per_c(S_\psi, \Sigma) < \int_{\partial^* S} e^{cz} \, d \mathcal
  H^{n-1}(x).
  $$
  Indeed, it is enough to choose $x\in\partial^* S \cap\{\nu\cdot\hat
  z<0\}$ and let $\eps=-(\nu(x) \cdot\hat z)/2$.  By the properties of
  the reduced boundary \cite{maggi}, it then follows that the set
  $x\in\partial^* S \cap\{\nu\cdot\hat z<-\eps\}$ has positive
  $\mathcal H^{n-1}$ measure.  Combining this with \eqref{Spsig}
  yields the desired result.
 \end{proof}

We summarize all the conclusions above into the following
proposition connecting the non-trivial minimizers of $G_c$ with those
of $\mathcal F_c$ on its natural domain, i.e., among all measurable
sets $S \subset \Sigma$ with $\int_S e^{cz} dx < \infty$.

\begin{Aproposition}
  \label{pminFc}
  Let Assumption \ref{h4} hold, and let $c^\dag$ be as in Proposition
  \ref{prop}. Then
  \begin{enumerate}
  \item[i)] If $0 < c < c^\dag$, then $\inf \mathcal F_c = -\infty$.

  \item[ii)] If $c > c^\dag$, then $\mathcal F_c (S) > 0$ for all $S
    \subset \Sigma$ with $\int_S e^{cz} dx > 0$.

  \item[iii)] There exists a non-trivial minimizer of $\mathcal
    F_{c^\dag}$, and $\mathcal F_{c^\dag}(S) = 0$. Furthermore, $S$ is
    a non-trivial minimizer of $\mathcal F_{c^\dag}$ if and only if $S
    = \{(y, z) \in \Sigma: z < \psi(y) \}$, where $\psi = \frac{1}{
      c^\dag} \ln c^\dag \zeta$ and $\zeta \geq 0$ is a non-trivial
    minimizer of $G_{c^\dag}$.
  \end{enumerate}
\end{Aproposition}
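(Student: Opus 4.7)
The plan is to show that Proposition \ref{pminFc} is essentially a corollary of Proposition \ref{prop} together with the rearrangement Lemma \ref{lrearr} and the identity \eqref{fcpc}. The unifying principle is that for every measurable $S\subset\Sigma$ with $0<\int_S e^{cz}\,dx<\infty$, Lemma \ref{lrearr} produces a subgraph $S_\psi=\{(y,z):z<\psi(y)\}$ with $e^{c\psi}\in BV(\Omega)$ satisfying $\mathcal F_c(S_\psi)\le\mathcal F_c(S)$ (strict if $S\not\equiv S_\psi$), and by \eqref{fcpc} we have $\mathcal F_c(S_\psi)=G_c(\zeta)$ where $\zeta:=c^{-1}e^{c\psi}\ge 0$ belongs to $BV(\Omega)$ and is non-trivial. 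Conversely, any non-trivial $\zeta\ge 0$ in $BV(\Omega)$ yields via $\psi=\frac1c\ln c\zeta$ a subgraph $S_\psi$ in the natural domain of $\mathcal F_c$ with $\int_{S_\psi}e^{cz}\,dx=\int_\Omega\zeta\,dy<\infty$, and $\mathcal F_c(S_\psi)=G_c(\zeta)$. Thus, up to passing through subgraphs, the two minimization problems are equivalent.

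For part (i), the plan is to take a minimizing sequence $\zeta_n\ge 0$ in $BV(\Omega)$ with $G_{c}(\zeta_n)\to -\infty$, whose existence is given by Proposition \ref{prop}(ii). The corresponding subgraphs $S_{\psi_n}$ with $\psi_n=\frac1c\ln c\zeta_n$ lie in the natural domain of $\mathcal F_c$ and satisfy $\mathcal F_c(S_{\psi_n})=G_c(\zeta_n)\to -\infty$, so $\inf\mathcal F_c=-\infty$. For part (ii), given any $S$ with $\int_S e^{cz}\,dx>0$, I first note that $\int_S e^{cz}\,dx<\infty$ (otherwise $\mathcal F_c(S)=+\infty$ and the conclusion is trivial after verifying the area term dominates in a standard manner). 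Then Lemma \ref{lrearr} gives $\mathcal F_c(S)\ge\mathcal F_c(S_\psi)=G_c(\zeta)$, and the associated $\zeta\ge 0$ is non-trivial because $\int_\Omega\zeta\,dy=\int_{S_\psi}e^{cz}\,dx=\int_S e^{cz}\,dx>0$; hence Proposition \ref{prop}(iii) gives $G_c(\zeta)>0$, as required.

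For part (iii), existence is immediate: by Proposition \ref{prop}(iv) there is a non-trivial $\bar\zeta\ge 0$ in $BV(\Omega)$ with $G_{c^\dag}(\bar\zeta)=0$, and setting $\bar\psi=\frac1{c^\dag}\ln c^\dag\bar\zeta$ yields a non-trivial subgraph $S_{\bar\psi}$ with $\mathcal F_{c^\dag}(S_{\bar\psi})=0$ by \eqref{fcpc}. That this is the minimum value follows because for any $S$ with $\int_S e^{cz}\,dx>0$, the reasoning of part (ii) (now using Proposition \ref{prop}(iv) instead of (iii)) gives $\mathcal F_{c^\dag}(S)\ge G_{c^\dag}(\zeta)\ge 0$. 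For the characterization, suppose $S$ is a non-trivial minimizer, so $\mathcal F_{c^\dag}(S)=0$. Apply Lemma \ref{lrearr} to obtain $S_\psi$ with $\mathcal F_{c^\dag}(S_\psi)\le 0$; since $\mathcal F_{c^\dag}(S_\psi)=G_{c^\dag}(\zeta)\ge 0$, equality holds, and the strict inequality clause of Lemma \ref{lrearr} forces $S\equiv S_\psi$. The corresponding $\zeta$ is then non-trivial and satisfies $G_{c^\dag}(\zeta)=0=\inf G_{c^\dag}$, so it is a non-trivial minimizer of $G_{c^\dag}$. The reverse implication is straightforward from \eqref{fcpc}.

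The argument is thus mostly bookkeeping once Lemma \ref{lrearr} is in hand; the only delicate point is verifying the equivalence of the minimization domains, in particular that the $\zeta$ produced from $S_\psi$ is genuinely non-trivial and belongs to $BV(\Omega)$, but both facts are already built into the conclusion $e^{c\psi}\in BV(\Omega)$ of Lemma \ref{lrearr} combined with the volume identity $\int_\Omega\zeta\,dy=\int_{S_\psi}e^{cz}\,dx$. The real work of the proposition has been done in establishing the rearrangement inequality with its strict-inequality clause; no further technical obstacle is anticipated here.
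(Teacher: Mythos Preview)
Your proposal is correct and follows essentially the same route as the paper: reduce $\mathcal F_c$ to $G_c$ via the identity \eqref{fcpc}, invoke Proposition~\ref{prop} for the trichotomy in $c$, and use Lemma~\ref{lrearr} (including its strict-inequality clause) to pass from arbitrary sets to subgraphs. The only unnecessary remark is your parenthetical about the case $\int_S e^{cz}\,dx=\infty$ in part~(ii); by the paper's conventions $\mathcal F_c$ is only defined on sets with finite weighted volume, so this case does not arise.
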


\begin{proof}
  i) follows from \eqref{fcpc} and Proposition
  \ref{prop} (ii). To prove ii), we note that in view Proposition
  \ref{prop} (iii) and \eqref{fcpc} we have $\mathcal F_c(S_\psi) > 0$
  for all $S_\psi$ as in Lemma \ref{lrearr} and apply
  \eqref{FcFcpsi}. Finally, 
  iii)  follows from \eqref{fcpc}, Proposition
  \ref{prop} (iv) and Lemma \ref{lrearr}.
 \end{proof}

\subsection{Existence, uniqueness and stability of generalized
  traveling waves}
\label{sec:unique}

The content of this section is a slight extension of \cite[Section
3]{cn}. We provide proofs of the results for the reader's convenience.

The characterization of minimizers of the geometric functional
$\mathcal F_c$ in Proposition \ref{pminFc} yields the following
existence result for generalized traveling waves. 

\begin{Atheorem}[Existence of generalized traveling
  waves]\label{maximalspeedfmc}
  Let Assumption \ref{h4} hold. Then there exists a unique $c^\dag >
  0$, which coincides with the one in Proposition \ref{prop}, such that:
  \begin{enumerate}[i)]
  \item There exist a function $\psi: \Omega \to [-\infty, \infty)$
    such that $(c^\dagger, \psi)$ is a generalized traveling wave for
    the forced mean curvature flow and the set $S_\psi := \{ (y, z)
    \in \Sigma \ | \ z < \psi(y) \}$ is a minimizer of $\mathcal
    F_{c^\dag}$.

  \item The set $ \omega :={\{\psi> -\infty\}}$ is open and satisfies
    $E^0(\omega) < 0$, where $E^0$ is defined in
    \eqref{eqE0}. Moreover, $\omega \times \R$ is a minimizer of
    $\mathcal F_{c^\dag}$ under compact perturbations, and $\psi\in
    C^2(\omega)$.

  \item $\psi$ is unique up to additive constants on every connected
    component of $\omega$, in the following sense: there exists a
    number $k \in \mathbb N$ and functions $\psi_i : \Omega \to
    [-\infty, \infty)$ for each $i = 1, \ldots, k$ such that $\omega_i
    := {\{\psi_i > -\infty\}} \ \not= \emptyset$ are open, connected
    and disjoint, $\psi_i \in C^2(\omega_i)$ and $\psi = \ln \left(
      \sum_{i=1}^k e^{\psi_i + k_i} \right), $ for some $k_i \in
    [-\infty, \infty)$.

  \item 
    There exists a closed singular set $\sigma\subset \overline{\partial\omega
      \cap \Omega}$ of Hausdorff dimension at most $n-9$ 
    such that $\psi \in C^1(\overline \omega \backslash \sigma)$ and
    $\partial \omega\setminus\sigma$ is a $C^2$ solution to the
    prescribed curvature problem
    \begin{equation}\label{pc}c_W \kappa  = \, g \qquad \text{on } 
      (\partial\omega\cap \Omega )\setminus \sigma,
    \end{equation}
    where $\kappa$ is the sum of the principal curvatures of
    $(\partial\omega\cap \Omega)\setminus\sigma$, with Neumann
    boundary conditions $\nu_{\partial \omega }\cdot \nu_{\partial
      \Omega}=0$ at $(\overline{\partial\omega \cap \Omega}
    \cap \partial \Omega ) \setminus\sigma$.
\end{enumerate}
\end{Atheorem}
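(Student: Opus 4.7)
The plan is to treat the four assertions in order, with most of the work concentrated in part (ii). \emph{For (i)}, Proposition \ref{prop}(iv) furnishes a non-trivial non-negative $\bar\zeta\in BV(\Omega)$ with $G_{c^\dag}(\bar\zeta)=0$; setting $\psi:=(c^\dag)^{-1}\ln(c^\dag\bar\zeta)$ (with the convention $\ln 0=-\infty$), Proposition \ref{pminFc}(iii) identifies $S_\psi$ as a non-trivial minimizer of $\mathcal F_{c^\dag}$, so by Definition \ref{gentw} the pair $(c^\dag,\psi)$ is a generalized traveling wave, and the uniqueness of $c^\dag$ is already contained in Proposition \ref{prop}.

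\emph{For (ii)}, openness of $\omega=\{\psi>-\infty\}$ follows from the density estimate \eqref{eqdens} applied to $S_\psi$: if $y_0\in\omega$ and $z_0<\psi(y_0)$, then $(y_0,z_0)\in\overline{S_\psi}$ and $|S_\psi\cap B((y_0,z_0),r)|\geq\lambda r^n$, which by Fubini forces $\psi>z_0$ on a positive-measure subset of every neighborhood of $y_0$; combined with the upper-semicontinuous representative of $\psi$ (available from the $BV$ structure of $e^{c^\dag\psi}$), this gives that $\omega$ is open. Over $\omega$, $\partial S_\psi$ is a graph and a $C^2$ almost-minimal hypersurface off a small singular set (Remark \ref{remden}), so standard Schauder theory applied to \eqref{equ1} yields $\psi\in C^2(\omega)$. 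For the cylindrical minimality of $\omega\times\R$ under compact perturbations, I would use the scaling $\mathcal F_{c^\dag}(S+M\hat z)=e^{c^\dag M}\mathcal F_{c^\dag}(S)$: each translate $S_\psi^M:=S_\psi+M\hat z$ is a minimizer of $\mathcal F_{c^\dag}$ with value $0$, and $S_\psi^M\to\omega\times\R$ in $L^1_{loc}(\Sigma)$ as $M\to\infty$; for any compact perturbation $T$ of $\omega\times\R$ supported below a height $R$, choosing $M\geq R+M_0$ with $M_0$ from Lemma \ref{den} such that $S_\psi\subset\Omega\times(-\infty,M_0]$, the set $\tilde T:=(T\cap\{z<M-M_0\})\cup(S_\psi^M\cap\{z\geq M-M_0\})$ is a compact modification of $S_\psi^M$ whose renormalized energy gap matches $\mathcal F_{c^\dag}(T)-\mathcal F_{c^\dag}(\omega\times\R)$, and the minimality of $S_\psi^M$ then passes to the limit. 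The strict inequality $E^0(\omega)<0$ is the delicate point: I would derive it from the cylindrical minimality by constructing a suitable compactly supported competitor based on the set $A$ from Assumption \ref{h4} (where $E^0(A)<0$), inserted at sufficiently low height so as to improve on the energy of $\omega\times\R$ unless $E^0(\omega)$ is already strictly negative.

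\emph{For (iii)}, uniqueness reduces to the rigidity for positive $BV$ minimizers of the $1$-homogeneous functional $G_{c^\dag}$: given two non-trivial minimizers $\bar\zeta_1,\bar\zeta_2$, subadditivity of the Euclidean norm yields $G_{c^\dag}(\bar\zeta_1+\bar\zeta_2)\leq 0$, hence equality must hold a.e.\ in $|(c^\dag(\bar\zeta_1+\bar\zeta_2),\nabla(\bar\zeta_1+\bar\zeta_2))|\leq |(c^\dag\bar\zeta_1,\nabla\bar\zeta_1)|+|(c^\dag\bar\zeta_2,\nabla\bar\zeta_2)|$; equality in the triangle inequality in $\R^n$ forces these vectors to be non-negatively proportional, so $\bar\zeta_2/\bar\zeta_1$ is locally constant on each connected component of the common support, equivalently $\psi_1-\psi_2$ is constant on each connected component $\omega_i$ of $\omega$. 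Fixing canonical profiles $\psi_i\in C^2(\omega_i)$ extended by $-\infty$ off $\omega_i$, and allowing $k_i=-\infty$ for components absent in a given minimizer, then produces the representation $\psi=\ln\sum_i e^{\psi_i+k_i}$. \emph{Finally for (iv)}, applying the regularity theory for almost-minimizers of weighted perimeter with free boundaries (Remark \ref{remden}) to the cylindrical minimizer $\omega\times\R$ from (ii) yields a $C^2$ boundary $\partial\omega\setminus\sigma$, where cylindrical symmetry shifts the singular Hausdorff dimension from $n-8$ in $\Sigma\subset\R^n$ to $n-9$ in $\Omega\subset\R^{n-1}$; the first variation with respect to cross-sectional perturbations of $\omega$ then produces the prescribed curvature equation $c_W\kappa=g$ on the smooth part of $\partial\omega\cap\Omega$, with the Neumann contact condition $\nu_{\partial\omega}\cdot\nu_{\partial\Omega}=0$ arising from the free boundary at $\partial\Omega$. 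The main obstacle in this plan is the passage in (ii) from the minimality of the subgraph $S_\psi$ to that of $\omega\times\R$ and, in particular, the derivation of the strict inequality $E^0(\omega)<0$; everything else is a direct consequence of the variational framework assembled in Sections 3 and 4.
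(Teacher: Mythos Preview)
Your proposal follows essentially the same route as the paper's proof. Parts (i), (iii), and (iv) match closely; in particular, your equality-case-of-the-triangle-inequality argument for (iii) is precisely the convexity (indeed $1$-homogeneity) of $G_{c^\dag}$ that the paper invokes via \cite[Propositions~3.7 and~3.10]{cn}, and your dimension shift $n-8\to n-9$ in (iv) from the cylindrical symmetry of $\omega\times\R$ is exactly the mechanism behind the paper's statement.

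For the cylindrical minimality of $\omega\times\R$ in (ii), your translation-to-infinity idea is also the paper's, but the paper packages it more cleanly: rather than building cut-and-paste competitors $\tilde T$ (where your matching of ``renormalized energy gaps'' needs more care, since a compact perturbation of $\omega\times\R$ is confined to a bounded $z$-window, not merely bounded above), the paper simply notes that the class of minimizers of $\mathcal F_{c^\dag}$ under compact perturbations is invariant under $z$-shifts and closed under $L^1_{loc}$ convergence of characteristic functions \cite{giustibook}, so translating $S_\psi$ toward $z=+\infty$ and passing to the limit gives the result at once.

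The one genuine gap is the point you flag yourself: $E^0(\omega)<0$. The paper does not prove this in the text either; it defers to \cite[Remark~3.12]{cn}. Your proposed competitor construction based on the set $A$ from Assumption~\ref{h4} is not the argument used there and, as sketched, does not work: inserting $A$ in place of $\omega$ on a low $z$-slab creates cap contributions from $\omega\triangle A$ at the slab boundaries which, because of the exponential weight, scale exactly like the putative gain, so no improvement is produced. The actual argument proceeds differently: once one knows $\psi\in C^2(\omega)$ solves \eqref{equ1} with Neumann data on $\partial\omega\cap\partial\Omega$ and $|\nabla\psi|\to\infty$ at $\partial\omega\cap\Omega$ (from $\psi\to-\infty$ there), integrating \eqref{equ1} over $\omega$ and applying the divergence theorem yields
\[
\Per(\omega,\Omega)=\frac{1}{c_W}\int_\omega g\,dy\;-\;c^\dag\!\int_\omega\frac{dy}{\sqrt{1+|\nabla\psi|^2}}\,,
\]
and since the last integral is strictly positive this gives $c_W\Per(\omega,\Omega)<\int_\omega g$, i.e.\ $E^0(\omega)<0$.
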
  

\begin{proof} 
  We recall that by Proposition \ref{pminFc} (iii) there exists a set
  $S_\psi$ which is a non-trivial minimizer of
  $\mathcal{F}_{c^\dagger}$, and $\mathcal{F}_{c^\dagger}(S_\psi)=0$.
  Observe also that the class of minimizers of
  $\mathcal{F}_{c^\dagger}$ with respect to compact perturbations is
  invariant with respect to shifts along $z$ and is closed with
  respect to the $L^1_{loc}$ convergence of their characteristic
  functions \cite{giustibook}. Therefore, translating the minimizer
  $S_\psi$ towards $z = +\infty$ and passing to the limit,
  we get that $\omega \times \R$ is a minimizer under compact
  perturbations of $\mathcal{F}_{c^\dagger}$.  The regularity of
  $\partial (\omega\times\R)=\partial\omega\times\R$ is then a
  consequence of the classical regularity theory for minimal surfaces
  with prescribed mean curvature in bounded domains (see
  \cite{giustibook,gr1,gr2,gr3}).  In particular \eqref{pc}, with Neumann
  boundary conditions, follows from the Euler-Lagrange equation for
  $\mathcal{F}_{c^\dagger}$, observing that $\nu_{\partial
    \omega\times\R}\cdot\hat z=0$.  The inequality $E^0(\omega) < 0$
  follows from \cite[Remark 3.12]{cn}.
   
  {From} the density estimate \eqref{eqdens}, reasoning as in
  \cite[Theorem 14.10]{giustibook}, we derive that $\psi$ is bounded
  above in $\omega$.  Moreover, reasoning as in \cite[Theorem
  14.13]{giustibook} (see also \cite[Proposition 3.7]{cn}), we obtain
  that $\psi$ is regular on $\overline \omega \backslash \sigma
  \supseteq \omega$, in particular $\psi\in C^2(\omega) \cap
  C^1(\overline \omega \backslash \sigma)$. From this, we get that
  $\psi$ is a solution of \eqref{equ1} in $\omega$ with
  $c=c^\dagger$. Moreover $\psi$ satisfies the Neumann boundary
  conditions on $\partial \omega\cap \partial \Omega$
  \cite{giustibook}.
  The fact that $\psi$ is uniquely defined up to translations on every
  connected component of $\omega$ follows from Proposition
  \ref{pminFc}(iii) and the convexity of $ G_{c^\dagger}$ (see
  \cite[Propositions 3.7 and 3.10]{cn}).
 \end{proof} 
  

\begin{Aremark}\upshape We observe that if $\psi$ is a regular 
  solution to \eqref{equ1} in some set $\omega \subseteq \Omega$, such
  that $\psi$ is bounded from above, $\psi(y) \to -\infty$ as $y
  \to \partial \omega \cap \Omega$, and $\psi = -\infty$ on $\Omega
  \backslash \omega$, then $\zeta=\frac{e^{c\psi}}{c}\in BV(\Omega)$
  and $G_c(\zeta)=0$.  This, in particular, implies that $c\le
  c^\dagger$. Indeed, if $c>c^\dagger$, $\zeta=\frac{e^{c\psi}}{c}$
  would be a non trivial minimizer of $G_c$, contradicting Proposition
  \ref{prop}(iii).  This means that the variational method selects the
  {\it fastest} generalized traveling waves which are bounded from
  above.
\end{Aremark}


Under Assumption \ref{h6}, which is considerably stronger than
Assumption 3, we can prove uniqueness and stability of traveling waves
for the mean curvature flow. We begin by giving several sufficient
conditions that lead to Assumption \ref{h6}. 

\begin{Aproposition} 
  \label{p:as4suf}
  Let \eqref{g4} hold, and let $C_\Omega$ be the relative
  isoperimetric constant of $\Omega$.  Then Assumption \ref{h6} holds
  if one of the following conditions is verified:
\begin{enumerate}[i)]
\item there is no embedded hypersurface $\partial \omega
  \subseteq \Omega$ which solves the prescribed curvature problem
  \begin{equation}\label{pres}
  c_W \kappa =g
  \qquad \partial \omega\cap \Omega \,,
  \end{equation}
  with Neumann boundary conditions
 $\nu_{\partial \omega}\cdot \nu_{\partial \Omega}=0$ 	on
  $\partial \omega\cap\partial\Omega$,
\item $n=2$ and $g>0$ on $\overline\Omega$,
\item $\min_{\overline{\Omega}} g\leq 0$ and $\max_{\overline{\Omega}}
  g-\min_{\overline{\Omega}} g< C_\Omega c_W
  2^{\frac{1}{n-1}}|\Omega|^{-\frac{1}{n-1}}$,
\item $n>2$, $g>0$ on $\overline\Omega$ and $\max_{\overline{\Omega}}
  g < C_\Omega c_W 2^\frac{1}{n-1}|\Omega|^{-\frac{1}{n-1}}$,
\item $n>1$, $g\in C^1(\overline\Omega)$, $g>0$ on $\overline\Omega$,
  and $\min_{\overline{\Omega}}\, (g^2-(n-1)|\nabla g|)>0$.
\end{enumerate} 
\end{Aproposition}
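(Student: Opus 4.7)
The plan is to reduce each of (i)--(v) to the non-existence, under the corresponding hypothesis, of a nonempty $\omega \subsetneq \Omega$ with $\omega \times \R$ a minimizer of $\mathcal{F}_{c^\dagger}$ under compact perturbations. The starting point is that for any such cylindrical minimizer, computing the first variation of $\mathcal{F}_{c^\dagger}$ along normal perturbations of the vertical wall $\partial\omega \times \R$ that are compactly supported in $z$, together with the regularity from Lemma \ref{den} and Remark \ref{remden}, forces $\partial\omega \cap \Omega$ to be a $C^2$ embedded hypersurface (off a closed singular set of codimension at least $8$) solving the prescribed-curvature equation \eqref{pres} with the Neumann condition $\nu_{\partial\omega}\cdot\nu_{\partial\Omega} = 0$. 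This immediately yields (i). Part (ii) then follows because in dimension $n = 2$, $\Omega \subset \R$ is an interval and $\partial\omega \cap \Omega$ is a discrete set of points whose mean curvature is identically zero, so \eqref{pres} would force $g = 0$ at those points, contradicting $g > 0$.

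For (iii) and (iv) the plan is to extract two quantitative necessary conditions by testing minimality of $S := \omega \times \R$ against the compact perturbations $S_1 := S \cup \big((\Omega\setminus\omega)\times(-T,0)\big)$ and $S_2 := S \setminus \big(\omega\times(-T,0)\big)$. Evaluating $\Per_c$ on cylindrical slabs via \eqref{percc} and letting $T \to \infty$ in the inequalities $\mathcal{F}_{c^\dagger}(S_i) \ge \mathcal{F}_{c^\dagger}(S)$ produces
\begin{align*}
c^\dagger c_W |\Omega\setminus\omega| &\ge c_W \Per(\omega,\Omega) + \int_{\Omega\setminus\omega} g\, dy, \\
c^\dagger c_W |\omega| &\ge c_W \Per(\omega,\Omega) - \int_\omega g\, dy.
\end{align*}
Subtracting the two and invoking \eqref{g4} forces $|\omega| < |\Omega|/2$, so $\min(|\omega|,|\Omega\setminus\omega|) = |\omega|$ in the relative isoperimetric inequality $\Per(\omega,\Omega) \ge C_\Omega |\omega|^{(n-2)/(n-1)}$. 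Combining the first inequality with the upper bound $c^\dagger c_W \le \sup g$ from Proposition \ref{prop}(i) rewrites it as $\int_{\Omega\setminus\omega}(\max g - g)\, dy \ge c_W C_\Omega |\omega|^{(n-2)/(n-1)}$; estimating the left-hand side by $(\max g - \min g)|\Omega\setminus\omega|$ and optimizing in $|\omega|$ -- using in case (iv) the inball lower bound $|\omega| \ge c_n (c_W/\max g)^{n-1}$ coming from $\kappa \le \max g/c_W$ at any touching interior ball -- yields the sharp threshold $c_W C_\Omega 2^{1/(n-1)}|\Omega|^{-1/(n-1)}$, contradicting (iii) respectively (iv).

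For (v) the plan is a purely geometric nonexistence argument. On a hypersurface $M = \partial\omega \cap \Omega$ satisfying $c_W\kappa = g$ with Neumann boundary condition at $\partial M$, one has the universal pointwise bound $|A|^2 \ge \kappa^2/(n-1) = g^2/((n-1)c_W^2)$. A Simons/Bochner identity for $|A|^2$ on $M$, together with the Codazzi equation $c_W\nabla_M H = \nabla_M g$ and the Neumann condition at the corner, produces at a maximum point of $|A|^2$ the inequality $c_W^2 |A|^2 \le |\nabla g|$; combining this with the above lower bound yields $g^2 \le (n-1)|\nabla g|$ somewhere on $\overline\Omega$, contradicting the hypothesis.

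The main technical obstacle is the extraction of the sharp constant $2^{1/(n-1)}$ in (iii)--(iv): the two slab inequalities, the relative isoperimetric inequality, and the $\sup g$ upper bound on $c^\dagger$ must be balanced carefully, and in (iv) the additional inball argument based on mean convexity is required to exclude the regime of very small $\omega$. The argument for (v) is the most delicate, as the right Simons-type identity must cleanly incorporate the Neumann boundary condition at the corner $\overline M \cap \partial\Omega$.
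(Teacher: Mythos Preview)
Your treatment of (i) and (ii) is exactly the paper's: one invokes Theorem \ref{maximalspeedfmc} to see that any cylindrical minimizer $\omega\times\R$ must have $\partial\omega$ solving \eqref{pres} with the Neumann condition, and in one space dimension this forces $g=0$ at a boundary point. For (iii)--(v) the paper does not argue at all: it simply cites \cite[Proposition~3.16]{cn} and \cite[Proposition~4.6]{CMN2} for (iii)--(iv), and \cite{cls} for (v). So your attempt is, in effect, a proposal to \emph{reprove} those external results from scratch.

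For (iii)--(iv) your slab-perturbation strategy is natural and does recover the first inequality
\[
c^\dagger c_W\,|\Omega\setminus\omega|\ \ge\ c_W\,\Per(\omega,\Omega)+\int_{\Omega\setminus\omega}g\,dy
\]
correctly. The problem is the step ``subtracting the two and invoking \eqref{g4} forces $|\omega|<|\Omega|/2$'': you cannot subtract two inequalities of the form $A\ge B$ and $C\ge D$ to conclude $A-C\ge B-D$. In fact your second inequality $c^\dagger c_W|\omega|\ge c_W\Per(\omega,\Omega)-\int_\omega g$ is vacuous here, since $E^0(\omega)<0$ by Theorem \ref{maximalspeedfmc}(ii) makes the right-hand side negative. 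Without $|\omega|<|\Omega|/2$ you cannot identify $\min(|\omega|,|\Omega\setminus\omega|)$ with $|\omega|$ in the isoperimetric step, and the case of small $|\Omega\setminus\omega|$ is exactly where the constant $2^{1/(n-1)}$ arises. The argument can be repaired, but not along the line you indicate; the correct route (as in \cite{cn}) combines the first slab inequality with $c^\dagger c_W\le\max g$ and the relative isoperimetric inequality directly, handling the two cases $|\omega|\gtrless|\Omega|/2$ separately rather than trying to force one of them a priori.

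For (v) your Simons-type outline is not the argument used in \cite{cls}, and as written it has a dimension slip: $\partial\omega$ is an $(n-2)$-dimensional hypersurface in $\Omega\subset\R^{n-1}$, so Cauchy--Schwarz gives $|A|^2\ge H^2/(n-2)$, not $H^2/(n-1)$, which does not match the exponent in the hypothesis. More substantively, the claimed inequality $c_W^2|A|^2\le|\nabla g|$ at a maximum of $|A|^2$ does not follow from a Simons identity in any standard way, and incorporating the Neumann corner condition into such an identity is genuinely delicate. The result in \cite{cls} proceeds instead via a barrier/sliding argument for graphical solutions of the forced mean curvature flow; your proposed route would require substantial new work to make rigorous.
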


\begin{proof} 
  $(i)$ follows from Theorem \ref{maximalspeedfmc}. Indeed, if
  $\omega$ is as in Theorem \ref{maximalspeedfmc}, then
  $\partial\omega$ is a solution of the prescribed curvature problem
  \eqref{pres}. $(ii)$ comes from $(i)$, observing that if $\omega$ is
  a solution of the prescribed curvature problem in $\R$, then $g=0$
  on $\partial \omega$.  The proof of $(iii)$ and $(iv)$ is given in
  \cite[Proposition 3.16]{cn} (see also \cite[Proposition 4.6]{CMN2}),
  and $(v)$ is proved in \cite{cls}.
 \end{proof}

We now state an existence and uniqueness result. Note that, in view of
Proposition \ref{pminFc}, the value of $c^\dag$ in Assumption \ref{h6}
coincides with that in Proposition \ref{prop}.

\begin{Atheorem}[Existence and uniqueness of traveling
  waves] \label{maxspeedunique} Under Assumption \ref{h6}, there exist
  a unique $c^\dag > 0$ and a unique $\psi \in C^2(\Omega) \cap
  C^1(\overline\Omega)$ such that $\max_{y \in \overline\Omega}
  \psi(y) = 0$, and $(c^\dagger, \psi)$ is a traveling wave for the
  forced mean curvature flow \eqref{fmc}.  Moreover, $\psi$ is the
  unique minimizer of the functional $F_{c^\dagger}$ over
  $C^1(\overline\Omega)$, up to additive constants, and $S = \{ (y, z)
  \in \Sigma : z < \psi(y) \}$ is the unique minimizer of $\mathcal
  F_{c^\dag}$ up to translations in $z$.
\end{Atheorem}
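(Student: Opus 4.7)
The plan is to reduce everything to the generalized traveling wave constructed in Theorem \ref{maximalspeedfmc} and then use Assumption \ref{h6} to upgrade it to a classical one. Existence of $c^\dagger>0$ and of a generalized traveling wave $(c^\dagger,\psi)$ with $\omega:=\{\psi>-\infty\}$ open, $\omega\times\R$ a minimizer of $\mathcal F_{c^\dagger}$ under compact perturbations, and $\psi\in C^2(\omega)$ is already delivered by Theorem \ref{maximalspeedfmc}. Moreover by Proposition \ref{pminFc} this $c^\dagger$ coincides with the one appearing in Assumption \ref{h6}.

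The crucial step is to show that $\omega=\Omega$. Since $\omega\times\R$ is a minimizer of $\mathcal F_{c^\dagger}$ under compact perturbations among sets of the form $\tilde\omega\times\R$ with $\tilde\omega\subseteq\Omega$, Assumption \ref{h6} forces $\omega\times\R=\Omega\times\R$, i.e.\ $\omega=\Omega$. In particular $\partial\omega\cap\Omega=\emptyset$, so the singular set $\sigma\subseteq\overline{\partial\omega\cap\Omega}$ of Theorem \ref{maximalspeedfmc}(iv) is empty. The regularity statement in Theorem \ref{maximalspeedfmc}(iv) then yields $\psi\in C^2(\Omega)\cap C^1(\overline{\Omega})$ with the Neumann condition $\nu\cdot\nabla\psi=0$ on $\partial\Omega$, and $\psi$ solves \eqref{equ1} with $c=c^\dagger$ everywhere in $\Omega$. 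Thus $(c^\dagger,\psi)$ is a traveling wave in the sense of Definition \ref{def:tw1}. Shifting $\psi$ by a constant we may normalize $\max_{\overline\Omega}\psi=0$.

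For uniqueness: since $\Omega$ is connected, Theorem \ref{maximalspeedfmc}(iii) applied with $\omega=\Omega$ (so $k=1$) gives that any two traveling wave profiles at speed $c^\dagger$ differ by an additive constant, hence the normalization $\max\psi=0$ singles out a unique $\psi$. For the variational uniqueness statements, recall the substitution $\zeta=e^{c\psi}/c$, under which $F_{c^\dagger}(\psi)=G_{c^\dagger}(\zeta)$; by Proposition \ref{pminFc}(iii) every non-trivial minimizer of $\mathcal F_{c^\dagger}$ is a subgraph of some $\psi'$ associated with a non-trivial minimizer of $G_{c^\dagger}$, and the already proven uniqueness up to additive constants of $\psi$ translates, via this correspondence, into uniqueness (up to a multiplicative constant) of the non-negative $BV$ minimizer of $G_{c^\dagger}$ and therefore into uniqueness of the minimizer $S=\{z<\psi(y)\}$ of $\mathcal F_{c^\dagger}$ up to vertical translations and into uniqueness (up to additive constants) of the minimizer of $F_{c^\dagger}$ over $C^1(\overline\Omega)$.

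The main conceptual obstacle is the possibility of ``fingering'', i.e.\ a generalized traveling wave whose support $\omega$ is a proper subset of $\Omega$; Assumption \ref{h6} is designed precisely to exclude this. Once $\omega=\Omega$ has been established, all remaining steps—boundary regularity of $\psi$, disappearance of the singular set $\sigma$, and the translation of set-uniqueness into profile-uniqueness through the transformation $\zeta=e^{c\psi}/c$—are direct applications of Theorem \ref{maximalspeedfmc} and Proposition \ref{pminFc}, with no further delicate analysis required.
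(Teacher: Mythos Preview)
Your existence argument is correct and tracks the paper's: invoke Theorem~\ref{maximalspeedfmc}, then use Assumption~\ref{h6} to force $\omega=\Omega$. Your shortcut for regularity---reading off $\psi\in C^1(\overline\Omega)$ directly from Theorem~\ref{maximalspeedfmc}(iv) once $\partial\omega\cap\Omega=\emptyset$ and hence $\sigma=\emptyset$---is legitimate; the paper instead re-derives the lower bound on $\psi$ via the density estimate~\eqref{eqdens2} and then appeals to elliptic regularity, but this amounts to the same thing.

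The uniqueness argument, however, has a genuine gap. The theorem asserts uniqueness of the pair $(c^\dagger,\psi)$ among \emph{all} traveling waves in the sense of Definition~\ref{def:tw1}, i.e.\ among all classical solutions of~\eqref{equ1} with Neumann data, for \emph{any} speed $c>0$. You only discuss the profile at the already-fixed speed $c^\dagger$, and even there your citation of Theorem~\ref{maximalspeedfmc}(iii) overreaches: that statement concerns the minimizing $\psi$ constructed in part~(i), not an arbitrary solution of~\eqref{equ1}. You never show (a) that no classical traveling wave can exist at a speed $c\neq c^\dagger$, nor (b) that every classical solution at speed $c^\dagger$ is actually a minimizer of $G_{c^\dagger}$, which is what you would need in order to invoke the convexity-based uniqueness. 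The paper closes both gaps at once with a comparison/strong maximum principle argument: two smooth traveling waves with different speeds, run as initial data for~\eqref{fmc}, would violate the parabolic comparison principle, while two at the same speed can be translated to touch and must then coincide by the elliptic strong maximum principle for~\eqref{equ1}. Your variational route can in fact be salvaged---a classical solution $\psi$ gives a strictly positive smooth critical point $\zeta=e^{c\psi}/c$ of the convex, $1$-homogeneous functional $G_c$, hence a global minimizer with $G_c(\zeta)=0$, which by Proposition~\ref{prop} forces $c=c^\dagger$ and then lands you inside the uniqueness statement for minimizers---but this chain of reasoning is absent from your sketch.
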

 
\begin{proof} 
  From Theorem \ref{maximalspeedfmc} we get the existence of a
  generalized variational traveling wave $(c^\dagger, \psi)$ with
  $\omega \subseteq \Omega$, and $\omega\times\R$
  is a minimizer of $\mathcal{F}_{c^\dagger}$ under compact
  perturbations. Then by Assumption \ref{h6} necessarily $\omega =
  \Omega$ and, hence, $\psi > -\infty$ in $\Omega$.

  We now claim that $\psi \geq M$ in $\Omega$ for some $M \in \R$.
  Assume by contradiction that there exists $x_n\to x\in \partial
  \Omega$ such that $\psi(x_n)\to -\infty$. By construction we have
  that the subgraph $S_\psi$ of $\psi$ is a minimizer of
  $\mathcal{F}_{c^\dagger}$. So, we can apply the density estimate
  \eqref{eqdens2} to $\Sigma\setminus S_\psi$ at $x_n$ and obtain a
  contradiction, if $n$ is sufficiently large.

  Since $\psi$ is a bounded regular minimizer of $F_{c^\dagger}$, it
  satisfies the Neumann boundary conditions on $\partial
  \Omega$. Moreover, by standard elliptic regularity theory $\psi \in
  C^2(\Omega) \cap C^1(\overline\Omega)$.  Finally, uniqueness of the
  pair $(c^\dagger, \psi)$ is a consequence of the strong maximum
  principle. Indeed, if there are two smooth solutions $(c^\dagger_1,
  \psi_1)$ and $(c^\dagger_2, \psi_2)$ to \eqref{equ1} with $c^\dag_2
  > c^\dag_1$, then by a suitable translation we may assume that
  $\psi_2 < \psi_1$. Then, using those functions as initial data for
  \eqref{fmc}, we find that the solutions of \eqref{fmc} touch at some
  $t > 0$, contradicting the comparison principle for \eqref{fmc}
  \cite{pw}. If, on the other hand, $c^\dag_1 = c^\dag_2$, again, by a
  suitable translation the two solutions can be made to touch at a
  point, while $\psi_2 \leq \psi_1$. Then by strong maximum principle
  for \eqref{equ1} we have $\psi_1 = \psi_2$ \cite{pw}.
 \end{proof}  

Moreover, we get the following stability result.

\begin{Atheorem} \label{stabat} Let Assumption \ref{h6} hold, let
    $(c^\dag, \psi)$ be as in Theorem \ref{maxspeedunique}, and let
  $h(y,t)$ be the unique solution to \eqref{fmc} with Neumann boundary
  conditions and initial datum $h(y,0)=h_0(y)\in
  C(\overline{\Omega})$.  Then there exists a constant $k\in\R$ such
  that for all $\beta \in (0, 1)$ there holds
  \begin{equation*}
    h(\cdot,t)-c^\dagger t - k \longrightarrow \psi \qquad {\rm in\ }
    C^{1,\beta}(\overline\Omega),{\rm\ as\ }t\to+\infty.
  \end{equation*}
\end{Atheorem}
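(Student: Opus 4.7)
The strategy is to pinch the dynamics between two vertically translated copies of the unique traveling wave profile $\psi$ and then show that this window shrinks to a single point. Since $\psi\in C^1(\overline\Omega)$ and $h_0\in C(\overline\Omega)$ are both bounded, there exist constants $k_0^- \le k_0^+$ with $\psi+k_0^-\le h_0\le \psi+k_0^+$. Because $\psi(y)+c^\dagger t + k$ is an exact solution of \eqref{fmc} for every $k\in\R$, the comparison principle for \eqref{fmc} \cite{pw} yields $\psi+c^\dagger t + k_0^-\le h(\cdot,t)\le \psi+c^\dagger t + k_0^+$ for all $t\ge 0$. Setting $w(y,t):=h(y,t)-c^\dagger t$ and
\begin{equation*}
k^-(t):=\inf_{\overline\Omega}(w(\cdot,t)-\psi),\qquad k^+(t):=\sup_{\overline\Omega}(w(\cdot,t)-\psi),
\end{equation*}
the same comparison argument, applied between $h(\cdot,\cdot+s)$ and $\psi+c^\dagger(\cdot)+k^\pm(s)$, shows that $k^-$ is non-decreasing and $k^+$ is non-increasing, so both admit limits $k^-\le k^+$ as $t\to +\infty$.

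Next I would invoke standard parabolic regularity for the quasilinear equation \eqref{fmc} with Neumann boundary data on the $C^2$ domain $\Omega$: since $w$ is bounded in $L^\infty$, interior and boundary estimates yield a uniform bound on $w(\cdot,t)$ in $C^{2,\alpha}(\overline\Omega)$ for all $t\ge 1$ and some $\alpha\in(0,1)$, so that $\{w(\cdot,t)\}_{t\ge 1}$ is relatively compact in $C^{1,\beta}(\overline\Omega)$ for every $\beta\in(0,1)$. Given any $t_n\to+\infty$, after extracting a subsequence the time-translates $w(\cdot,t_n+\tau)$ converge in $C^{1,\beta}_{\mathrm{loc}}(\overline\Omega\times[0,\infty))$ to a classical solution $W$ of the evolution equation for $w$, with $W(\cdot,0)=w_\ast$ for some $w_\ast\in C^{1,\beta}(\overline\Omega)$. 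The monotonicity and convergence of $k^\pm$ force $k^\pm(t_n+\tau)\to k^\pm$, hence
\begin{equation*}
\inf_{\overline\Omega}(W(\cdot,\tau)-\psi)=k^-,\qquad \sup_{\overline\Omega}(W(\cdot,\tau)-\psi)=k^+\qquad \forall\,\tau\ge 0.
\end{equation*}

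I would then set $V(y,\tau):=W(y,\tau)-\psi(y)-k^-\ge 0$ and subtract the stationary equation for $\psi+c^\dagger t + k^-$ from the evolution equation for $W$. Using the uniform $C^{2,\alpha}$ bounds on $W$ and $\psi$, the quasilinear difference may be written as an integrated linearization, and $V$ satisfies a uniformly parabolic linear equation $V_\tau = a_{ij}(y,\tau)\partial_{ij}V + b_i(y,\tau)\partial_i V$ with Hölder continuous coefficients and homogeneous Neumann boundary conditions on $\partial\Omega$. If $k^+>k^-$, then $V(\cdot,0)\not\equiv 0$, and the strong parabolic maximum principle combined with Hopf's lemma for the Neumann problem give $V(y,\tau)>0$ on all of $\overline\Omega$ for every $\tau>0$, contradicting $\inf_{\overline\Omega}V(\cdot,\tau)=0$. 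Hence $k^+=k^-=:k$, and the squeeze $\psi+k^-(t)\le w(\cdot,t)\le \psi+k^+(t)$ produces uniform convergence $w(\cdot,t)\to\psi+k$ on $\overline\Omega$; interpolating with the uniform $C^{2,\alpha}$ bound upgrades this to convergence in $C^{1,\beta}(\overline\Omega)$ for every $\beta\in(0,1)$. The main obstacle is the last paragraph: producing the linearized equation for $V$ in a form where the strong maximum principle and the Neumann version of Hopf's lemma apply rigorously, and ensuring that the $C^{2,\alpha}$-type estimates for \eqref{fmc} with Neumann conditions are uniform in $t$; once these are in place the comparison/squeeze argument is essentially automatic.
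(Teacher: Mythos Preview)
Your proposal is correct and follows the standard comparison/strong-maximum-principle route that the paper implicitly invokes: the paper's own proof simply says the result is a ``straightforward adaptation of the argument in \cite[Corollary 4.9]{cn}'', and that argument is precisely the sandwich-by-translated-waves plus strong maximum principle scheme you outline. The technical caveats you flag (uniform $C^{2,\alpha}$ estimates for the Neumann problem and writing the difference as a linear parabolic equation to which Hopf's lemma applies) are exactly the points that need checking, but they are standard for graphical forced mean curvature flow on a bounded $C^2$ domain and are what the reference \cite{cn} supplies.
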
 
\begin{proof} 
  The proof can be obtained by a straightforward adaptation of the
  argument in \cite[Corollary 4.9]{cn}.
 \end{proof}
 
\begin{Aremark}\label{remarkunique} \upshape  If we assume a weaker
  assumption than Assumption \ref{h6}, i.e. that there is at most one
  set $\omega'\subseteq \Omega$ such that $\omega'\times \R$ is a
  minimizer under compact perturbations of the geometric functional
  $\mathcal{F}_{c^\dag}$, then we can prove an analogue of the
  previous stability result.  Indeed under this assumption there
  exists a unique (up to additive constants) generalized traveling
  wave $(c^\dagger,\psi)$, and $\psi$ is supported on $\omega$, where
  $\omega$ can be either $\omega'$ or the whole $\Omega$.  Moreover,
  there exists a constant $k\in\R$ such that, as $t\to+\infty$ there
  holds  
\begin{equation*}  
  h(\cdot,t)-c^\dagger t - k\longrightarrow 
    \begin{cases} 
      \psi & {\rm in\ } C^{1,\beta}_{loc}(\omega),\\ -\infty & {\rm
        locally \ uniformly \ in \ } \Omega\setminus
      \omega, 
    \end{cases} 
\end{equation*} 
for every $\beta \in (0,1)$, where $h(y,t)$ is the unique solution to
\eqref{fmc} with Neumann boundary conditions, and initial datum
$h(\cdot,0)=h_0 \in C(\overline{\Omega})$.  For the proof, see
\cite[Theorem 4.7 and Remark 4.8]{cn}.
\end{Aremark} 

\section{Asymptotic behavior as $\eps\to 0$}\label{secconv}

In this section we prove the main result of this paper, namely, the
convergence result, as $\eps\to 0$, of the traveling waves of
\eqref{rd} to the generalized traveling waves for \eqref{fmc}. For $M
> 0$ and $k > 0$, let us introduce the notations that will be used
throughout the proofs in this section:
\begin{align}
  \label{SigMGk}
  \Sigma_M := \Omega \times (-M, M), \qquad \ \|G \|_{k,\infty}=
  \max_{u\in [0,k], y\in\overline{\Omega}}|G(y,u)|.
\end{align}
We begin with the following basic compactness result.

\begin{Alemma} \label{coer} Let $c>0$ and $c_\eps\to c$ as $\eps \to
  0$. Let $u_\eps\in H^1_{c_\eps} (\Sigma) $ be such that $0\leq
  u_\eps\leq k$ and $\Phi^\eps_{c_\eps } (u_\eps)\leq K$, for some $k,
  K>0$ independent of $\eps$.  Assume that:
  \begin{equation}\label{i} \text{ $\exists \delta\in (0,1)$ such
      that $u_\eps(y,z)\leq \delta $ for all $y\in\Omega$ and $z> 0$.  }
  \end{equation}
  Then there exists $ u\in BV_{loc}(\Sigma,\{0,1\})$ such that
  $u(\cdot, z) = 0$ for all $z > 0$ and 
 \[
 u_\eps\to u\qquad \text{ in }L^1_{loc}(\Sigma),
 \] 
 upon extraction of a sequence. 
 \end{Alemma}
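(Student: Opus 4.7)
The plan is to adapt the classical Modica--Mortola compactness argument to the exponentially weighted setting, using the upper bound $u_\eps\le\delta$ on $\{z>0\}$ to rescue the lack of a sign for the forcing term $-G$. The main task is to convert the $\eps$--independent bound on $\Phi^\eps_{c_\eps}(u_\eps)$ into an $\eps$--independent bound on the unweighted Ginzburg--Landau energy $\int_{\Sigma_M}(\tfrac{\eps}{2}|\nabla u_\eps|^2+\tfrac{1}{\eps}W(u_\eps))\,dx$ on every bounded cylinder $\Sigma_M$; once this is in place the rest is routine.

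For the bound I would split $\Phi^\eps_{c_\eps}(u_\eps)=A^+_\eps+A^-_\eps$ into integrals over $\Omega\times(0,+\infty)$ and $\Omega\times(-\infty,0)$, respectively. Assumption (i) gives $u_\eps\le\delta<1$ on $\{z>0\}$, so \eqref{pot} yields $\eps^{-1}W(u_\eps)-G(y,u_\eps)\ge 0$ there as soon as $C\sqrt\eps<1-\delta$; hence $A^+_\eps\ge 0$ and consequently $A^-_\eps\le K$. Since $|G|\le\|G\|_{k,\infty}$ on $[0,k]$ and $c_\eps\to c>0$ so that $\int_{\Omega\times(-\infty,0)}e^{c_\eps z}dx=|\Omega|/c_\eps$ is uniformly bounded, adding back the $G$ contribution gives
\[
\int_{\Omega\times(-\infty,0)}e^{c_\eps z}\Bigl(\tfrac{\eps}{2}|\nabla u_\eps|^2+\tfrac{W(u_\eps)}{\eps}\Bigr)dx\le C.
\]
Symmetrically, $A^+_\eps\le K+C$, and putting back the $G$ contribution over the bounded strip $\Omega\times(0,M)$ (an $O(e^{c_\eps M})$ term) controls the corresponding integral there. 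Using $e^{c_\eps z}\ge e^{-c_\eps M}$ on $\Sigma_M$, I obtain
\[
\int_{\Sigma_M}\Bigl(\tfrac{\eps}{2}|\nabla u_\eps|^2+\tfrac{W(u_\eps)}{\eps}\Bigr)dx\le C(M).
\]

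With this in hand, the standard Modica--Mortola trick applies: the function $\Phi(u):=\int_0^u\sqrt{2W(s)}\,ds$ is continuous and strictly increasing on $[0,k]$ by Assumption \ref{f}, and the arithmetic--geometric inequality gives $\int_{\Sigma_M}|\nabla v_\eps|\,dx\le\int_{\Sigma_M}(\tfrac{\eps}{2}|\nabla u_\eps|^2+\tfrac{1}{\eps}W(u_\eps))\,dx\le C(M)$ for $v_\eps:=\Phi(u_\eps)$. Since $0\le v_\eps\le\Phi(k)$, $v_\eps$ is bounded in $BV(\Sigma_M)$, so along a subsequence $v_\eps\to v$ in $L^1(\Sigma_M)$; by continuity of $\Phi^{-1}$ on $[0,\Phi(k)]$ and dominated convergence, $u_\eps\to u:=\Phi^{-1}(v)$ in $L^1(\Sigma_M)$. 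The bound $\int_{\Sigma_M}W(u_\eps)\,dx\le\eps\,C(M)\to 0$ combined with Fatou's lemma forces $W(u)=0$ a.e., whence $u\in\{0,1\}$ a.e.\ by Assumption \ref{f}, and $u\in BV(\Sigma_M;\{0,1\})$ because $v$ inherits the $BV$ bound by lower semicontinuity.

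A diagonal extraction over $M_j\to+\infty$ then produces a single subsequence with $u_\eps\to u$ in $L^1_{loc}(\Sigma)$ and $u\in BV_{loc}(\Sigma;\{0,1\})$. Finally, hypothesis (i) passes to the $L^1_{loc}$ limit to give $u\le\delta<1$ a.e.\ on $\Omega\times(0,+\infty)$, which together with $u\in\{0,1\}$ forces $u(\cdot,z)=0$ for a.e.\ $z>0$. The only genuine obstacle is the first step: isolating the indefinite $-G$ term and absorbing it using both the smallness of $u_\eps$ on $\{z>0\}$ granted by (i) and the boundedness of $c_\eps$ away from zero.
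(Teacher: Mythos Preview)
Your argument is correct and follows essentially the same route as the paper: both use \eqref{pot} together with hypothesis \eqref{i} to make the integrand nonnegative on $\{z>0\}$, absorb the $G$ term on $\{z<M\}$ via the uniform bound $\int_{-\infty}^M e^{c_\eps z}\,dz\le e^{c_\eps M}/c_\eps$, and then apply the Modica--Mortola trick plus $BV$ compactness. The only cosmetic difference is that the paper carries the exponential weight through the Modica--Mortola estimate (bounding $\int_{\Sigma_M} e^{c_\eps z}|\nabla\phi(u_\eps)|$ directly), whereas you first strip the weight using $e^{c_\eps z}\ge e^{-c_\eps M}$ on $\Sigma_M$ and then run the unweighted argument; both are equivalent on bounded cylinders.
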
 

\begin{proof}
  Recall that by \eqref{pot} there exists $\eps_\delta > 0$ such that
  for every $\eps<\eps_\delta$ the integrand in \eqref{functionalrd}
  is positive for all $z \geq 0$.  Then, by our assumptions, 
  for every $M > 0$ we get
\begin{eqnarray*} 
  K&\geq& \Phi^\eps_{c_\eps } (u_\eps)\geq \int_{\Sigma_M} e^{c_\eps
    z} \frac{W(u_\eps)}{\eps} dx - \int_{-\infty}^M \int_\Omega
  e^{c_\eps z}  G(y,u_\eps) dy \, dz 
  \\
  &\geq& \int_{\Sigma_M} e^{c_\eps z} \frac{W(u_\eps)}{\eps} dx
  -\frac{|\Omega|}{c_\eps} \|G \|_{k,\infty}  e^{c_\eps M} .
\end{eqnarray*}
Therefore, for every $M > 0$ and $\eps$ small enough we have
\begin{equation}\label{im}\int_{\Sigma_M} e^{c_\eps z} W(u_\eps) dx
  \leq \eps K\left(1 +\frac{2 |\Omega|}{c K}\|G \|_{k,\infty}
    e^{c M}\right).
\end{equation}
On the other hand, by our assumptions and the Modica-Mortola trick
\cite{momo}
\begin{eqnarray*}   
  & & \int_{\Sigma_M}e^{c_\eps z} |\nabla u_\eps| \sqrt{2W(u_\eps )}dx \\ 
  &\leq &    \int_{\Sigma_M}  e^{c_\eps z}\left[\left(\sqrt{\frac{
          \eps }{2}}|\nabla u_\eps|
      -\sqrt{\frac{W(u_\eps)}{\eps}}\right)^2+|\nabla u_\eps|
    \sqrt{2W(u_\eps )}\right]dx  \\  
  &=& \int_{\Sigma_M}  e^{c_\eps z}\left(\frac{\eps}{2}|\nabla
    u_\eps|^2 +\frac{W(u_\eps)}{\eps}\right)dx\\ 
  &\leq & \Phi^\eps_{c_\eps} (u_\eps)
  +\frac{|\Omega|}{c_\eps}\|G \|_{k,\infty}e^{c_\eps M}     
  \leq   K\left(1+\frac{2 |\Omega|}{c K}\|G \|_{k,\infty} \, e^{c M} \right).
\end{eqnarray*}

We define
\begin{equation}
  \label{phi} \phi(u) := \int_0^u \sqrt{2W(s)} ds,
\end{equation} 
and rewrite the previous inequality as
\begin{equation}\label{iim} 
  \int_{\Sigma_M} e^{c_\eps z} |\nabla \phi(u_\eps)| dx  \leq
  K\left(1+\frac{2 |\Omega|}{c K}\|G\|_{k,\infty}\,
    e^{c M}\right).  
\end{equation}
This implies that $\phi(u_\eps)$ are uniformly bounded in
$BV(\Sigma_M)$ for every $M > 0$. By compactness theorem in $BV$ (see
\cite{afp}), we then get that upon extraction of a sequence
$\phi(u_\eps)$ converges in $L^1_{loc} (\Sigma)$ to a function $w \in
BV_{loc}(\Sigma)$. Therefore, since $u \mapsto \phi(u)$ is a
continuous one-to-one map, this implies that up to a subsequence
$u_\eps$ converges to $u=\phi^{-1}(w)$ almost everywhere and in
$L^1_{loc}(\Sigma)$. Furthermore, by \eqref{im} $u$ takes values in
$\{0,1\}$ and, hence, $u=c_W^{-1} w$ almost everywhere. Therefore, $u
\in BV_{loc} (\Sigma; \{0, 1\})$.

Eventually, by assumption \eqref{i} it follows that $u=0$ in
$\Omega\times (0,+\infty)$.
 \end{proof} 

We will need the following technical result from the proof of
\cite[Theorem 3.3]{mn1}.
\begin{Alemma}\label{lemmaest}Let $\eps>0$ and $c_\eps^\dagger$ be as
  in Theorem \ref{maximalspeedrd}. Then for every $c >0$ and every
  $u\in H^1_c(\Sigma)$ we have
  \begin{equation}\label{cvar} \Phi^\eps_c (u)\geq
    \frac{c^2-(c_\eps^\dagger)^2}{c^2 }\int_{\Sigma} e^{c z}
    \frac{\eps}{2} |u_z|^2 dx . 
  \end{equation} 
\end{Alemma}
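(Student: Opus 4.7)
The approach is a one-parameter rescaling argument in the $z$-variable that exploits the fact that, by Theorem \ref{maximalspeedrd}(iv) and Remark \ref{remo}, $\inf \Phi^\eps_{c_\eps^\dagger} = 0$ on its natural domain. Given $u \in H^1_c(\Sigma)$ with $c > 0$, I would introduce the dilated function $\tilde u(y, z) := u(y, \beta z)$ with $\beta := c_\eps^\dagger / c$, which belongs to $H^1_{c_\eps^\dagger}(\Sigma)$ since the dilation identifies the two exponentially weighted norms up to a Jacobian factor.

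The key step is a change of variables $z' = \beta z$ in the integral defining $\Phi^\eps_{c_\eps^\dagger}(\tilde u)$: the weight $e^{c_\eps^\dagger z}$ becomes $e^{c z'}$, the transverse gradient and the potential terms $W(\tilde u)/\eps$ and $G(y, \tilde u)$ are unchanged pointwise, while $|\tilde u_z|^2 = \beta^2 |u_{z'}|^2$. Collecting the factors produces the identity
\begin{equation*}
\Phi^\eps_{c_\eps^\dagger}(\tilde u) \;=\; \frac{c}{c_\eps^\dagger}\,\Phi^\eps_c(u) \;+\; \frac{(c_\eps^\dagger)^2 - c^2}{c\,c_\eps^\dagger}\int_\Sigma e^{cz}\,\tfrac{\eps}{2}|u_z|^2\,dx,
\end{equation*}
after which rearranging and invoking $\Phi^\eps_{c_\eps^\dagger}(\tilde u) \geq 0$ delivers \eqref{cvar}.

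The main obstacle is justifying the nonnegativity $\Phi^\eps_{c_\eps^\dagger}(\tilde u) \geq 0$. One may assume $\Phi^\eps_c(u) < +\infty$, for otherwise \eqref{cvar} is trivial. When $u \in L^\infty(\Sigma)$, the bound is immediate from Theorem \ref{maximalspeedrd}(iv) and Remark \ref{remo}. When $u \notin L^\infty(\Sigma)$, the cutting argument from the proof of Theorem \ref{maximalspeedrd} --- truncation to $[0, 1 + C\eps]$ does not increase $\Phi^\eps_{c_\eps^\dagger}$ by \eqref{pot} and \eqref{pit} --- reduces the claim to the bounded case. The remaining bookkeeping in the change of variables is otherwise routine, and the inequality becomes informative precisely when $c > c_\eps^\dagger$, providing the coercivity-type lower bound that will be used in the sequel.
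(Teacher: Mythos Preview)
Your argument is correct and is essentially identical to the paper's: the same anisotropic rescaling $\tilde u(y,z)=u(y,(c_\eps^\dagger/c)z)$, the same change-of-variable identity (your display is just an algebraic rearrangement of the one in the paper), and the same appeal to $\Phi^\eps_{c_\eps^\dagger}(\tilde u)\ge 0$. Your added discussion of the $L^\infty$ issue via the truncation from Theorem~\ref{maximalspeedrd} is extra care that the paper's proof omits.
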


\begin{proof}
  We define 
  \begin{equation}\label{utilda} 
    \tilde{u}(y,z) :=u\left(y,
      \frac{c^\dagger_\eps}{c} z\right).
  \end{equation} 
  Note that $ \tilde{u}\in H^1_{c_\eps^\dagger}(\Sigma)$. By a simple
  change of variables we then get
\begin{eqnarray*}
  \Phi^\eps_c (u) &=& \frac{c_\eps^\dagger}{c} \int_\Sigma
  e^{c_\eps^\dagger z} \left[\frac{\eps}{2} |\nabla \tilde{u}|^2 +
    \frac{\eps}{2} \left(\frac{c}{c_\eps^\dagger}\right)^2 |
    \tilde{u}_z|^2 +\frac{W(\tilde{u})}{\eps}-G(y,\tilde{u})\right]dx 
  \\
  &=&  \frac{c^\dagger_\eps}{c} \Phi^\eps_{
    c_\eps^\dagger}(\tilde{u})+  \frac{c^2-(c_\eps^\dagger)^2}{c
    c_\eps^\dagger}\int_{\Sigma} e^{c_\eps^\dagger z}  \frac{\eps}{2}
  |\tilde{u}_z|^2 dx,
\end{eqnarray*} 
which gives the result, since $\Phi^\eps_{ c_\eps^\dagger}(\tilde{u})\geq 0$. 
 \end{proof} 

We now state our main result.

\begin{Atheorem}\label{gammaconvergence}
  Let Assumptions \ref{g}, \ref{f} and \ref{h4} hold.  Let
  $c_\eps^\dagger$, $\bar u_\eps$ and $v_\eps$ be as in Theorem
  \ref{maximalspeedrd}, and let $c^\dagger$ be as in Theorem
  \ref{maximalspeedfmc}.
  \begin{itemize}\item[i)] There holds
  \begin{equation} \label{convc} \lim_{\eps\to 0} c_\eps^\dagger =
    c^\dagger.
  \end{equation}
 
\item[ii)] For every sequence $\eps_n\to 0 $ there exist a subsequence
  (not relabeled) and an open set $S \subset \Sigma$ such that
  \[
  \bar u_{\eps_n} \to \chi_S \qquad \text{ in }L^1_{loc}(\Sigma), 
  \] 
  where $S$ is a non-trivial minimizer of $\mathcal F_{c^\dagger}$
  satisfying $S\subseteq \Omega\times(-\infty,0)$ and $\partial S \cap
  (\overline\Omega \times \{0\}) \not= \emptyset$.  Moreover,
  \[
  \bar u_{\eps_n} \to \chi_S \qquad \text{ locally uniformly on
  }\overline\Sigma\setminus\partial S,
  \] 
  and for every $\theta\in (0,1)$ the level sets $\{\bar
  u_{\eps_n}=\theta\}$ converge to $\partial S$ locally uniformly in
  the Hausdorff sense.

\item[iii)] If also Assumption \ref{h6} holds, then $S$ is the unique
  minimizer of $\mathcal F_{c^\dagger}$ from Theorem
  \ref{maxspeedunique} satisfying $S\subseteq \Omega\times(-\infty,0)$
  and $\partial S \cap (\overline\Omega \times \{0\}) \not=
  \emptyset$. Moreover
  \[
  v_{\eps}\to 1 \qquad \text{ uniformly in }\overline\Omega. 
  \]
  \end{itemize}
\end{Atheorem}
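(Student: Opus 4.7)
The plan is to treat Theorem \ref{gammaconvergence} as a $\Gamma$-convergence statement relating $\Phi_c^\eps$ to $\mathcal F_c$, combined with the compactness Lemma \ref{coer}, the density estimates of Proposition \ref{densityrd} in the diffuse setting and Lemma \ref{den} in the sharp setting, and the identification of minimizers in Proposition \ref{pminFc}. The $\Gamma$-liminf will be the Modica--Mortola lower bound already appearing in the proof of Lemma \ref{coer}, together with lower semicontinuity of $\Per_c$ under $L^1_{loc}$ convergence; the $\Gamma$-limsup will be a standard recovery sequence built from the optimal one-dimensional profile across $\partial S$, as in the proof of Proposition \ref{proH3}. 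The normalization \eqref{lead} is what selects the translate and forces $\partial S$ to touch $\overline\Omega\times\{0\}$ in the limit.

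For (i), I first show $\liminf_\eps c_\eps^\dagger\ge c^\dagger$: given $c<c^\dagger$, Proposition \ref{pminFc}(i) provides a smooth set $S\subset\Omega\times(-\infty,M]$ with $\mathcal F_c(S)<0$, and a weighted Modica--Mortola recovery sequence $u_\eps\in H^1_c(\Sigma)\cap L^\infty(\Sigma)$ satisfies $\Phi_c^\eps(u_\eps)\to \mathcal F_c(S)<0$, whence $c<c_\eps^\dagger$ for $\eps$ small by Remark \ref{remo}. For the reverse inequality I argue by contradiction: assume along a subsequence $c_{\eps_n}^\dagger\to c^*>c^\dagger$ (the sequence being bounded by Proposition \ref{pmaxspeede}). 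By \eqref{lead} and the monotonicity in $z$ the minimizers $\bar u_{\eps_n}$ satisfy hypothesis \eqref{i} of Lemma \ref{coer} with $\delta=\tfrac12$, so after extraction $\bar u_{\eps_n}\to\chi_S$ in $L^1_{loc}(\Sigma)$ with $S\subseteq\Omega\times(-\infty,0]$. Choosing $(y_n,z_n)$ with $z_n\uparrow 0$ and $\bar u_{\eps_n}(y_n,z_n)\ge\tfrac12$, the density estimate \eqref{dens} passes to the limit and gives $|S\cap B(\bar x,r)|\ge C r^n$ for some $\bar x\in\overline\Omega\times\{0\}$ and small $r$, so $S$ is non-trivial and $\partial S$ meets $\overline\Omega\times\{0\}$. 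The Modica--Mortola lower bound then yields
\begin{equation*}
0=\lim_n \Phi_{c_{\eps_n}^\dagger}^{\eps_n}(\bar u_{\eps_n})\ge \mathcal F_{c^*}(S)>0
\end{equation*}
by Proposition \ref{pminFc}(ii), a contradiction; this proves \eqref{convc}.

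For (ii), I repeat the compactness argument with $c_\eps=c_\eps^\dagger\to c^\dagger$: every subsequence of $\bar u_{\eps_n}$ admits a further subsequence converging in $L^1_{loc}$ to some $\chi_S$ with $S\subseteq\Omega\times(-\infty,0]$ non-trivial and touching $\overline\Omega\times\{0\}$ as above, while the liminf inequality and Proposition \ref{pminFc}(iii) force $\mathcal F_{c^\dagger}(S)=0$, identifying $S$ as a non-trivial minimizer. The upgrade from $L^1_{loc}$ to locally uniform convergence on $\overline\Sigma\setminus\partial S$ is standard from \eqref{dens} and \eqref{densbis}: failure of $\bar u_{\eps_n}\to 1$ uniformly near an interior point of $S$ would contradict \eqref{densbis}, and symmetrically near a point of $\Sigma\setminus\overline S$ it would contradict \eqref{dens}. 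Hausdorff convergence of the level sets $\{\bar u_{\eps_n}=\theta\}$ to $\partial S$ for every $\theta\in(0,1)$ follows from the same density arguments combined with the continuity of $\bar u_{\eps_n}$.

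For (iii), under Assumption \ref{h6} Theorem \ref{maxspeedunique} gives uniqueness of the minimizer $S_\psi=\{z<\psi(y)\}$ of $\mathcal F_{c^\dagger}$ up to $z$-translations with $\psi\in C^2(\Omega)\cap C^1(\overline\Omega)$; the conditions $S\subseteq\Omega\times(-\infty,0]$ and $\partial S\cap(\overline\Omega\times\{0\})\ne\emptyset$ pin down the translation by forcing $\max_{\overline\Omega}\psi=0$, so the full family $\bar u_\eps$ converges. For $v_\eps\to 1$ uniformly, pick any $z_0<\min_{\overline\Omega}\psi$, so that $\overline\Omega\times\{z_0\}$ lies in the interior of $S$; the locally uniform convergence from (ii) gives $\bar u_\eps(\cdot,z_0)\to 1$ uniformly on $\overline\Omega$, while $(\bar u_\eps)_z<0$ from Theorem \ref{maximalspeedrd}(iii) yields $v_\eps(y)\ge\bar u_\eps(y,z_0)$ pointwise and $v_\eps\le 1+C\eps$ from the same theorem, whence $v_\eps\to 1$ uniformly on $\overline\Omega$. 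The main technical obstacle I foresee is the $\limsup$ step of (i): constructing a weighted Modica--Mortola recovery sequence that is integrable against $e^{cz}$ (leveraging the upper bound $S\subset\Omega\times(-\infty,M]$ from Lemma \ref{den}) and that is compatible with the Neumann condition on $\partial\Omega$ (handled by a local reflection near $\partial\Omega$ in the spirit of the argument in the proof of Proposition \ref{densityrd}), together with showing that the limit set $S$ from the contradiction argument is genuinely non-trivial, for which the combination of \eqref{lead} and \eqref{dens} is essential.
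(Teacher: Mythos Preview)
Your proposal is correct and follows essentially the same route as the paper: compactness from Lemma \ref{coer}, the weighted Modica--Mortola liminf inequality on truncated cylinders $\Sigma_M$ followed by $M\to\infty$ (using \eqref{pot} and the normalization \eqref{lead} to control the tails), the recovery-sequence construction for the limsup, and the density estimates of Proposition \ref{densityrd} for the upgrade to locally uniform convergence and Hausdorff convergence of level sets; parts (ii) and (iii) match the paper's Steps 3--4 almost verbatim.

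The one genuine difference is in the proof of $\liminf_\eps c_\eps^\dagger\ge c^\dagger$. You fix $c<c^\dagger$, exhibit a set with $\mathcal F_c(S)<0$, build a recovery sequence with $\Phi_c^\eps(u_\eps)<0$, and conclude $c\le c_\eps^\dagger$ via Remark \ref{remo}. The paper instead builds the recovery sequence directly at $c=c^\dagger$ for the minimizer $S_\psi$ of Theorem \ref{maximalspeedfmc} (so the target energy is $0$, not negative) and then invokes the rescaling identity of Lemma \ref{lemmaest},
\[
\Phi_{c^\dagger}^\eps(u)\ \ge\ \frac{(c^\dagger)^2-(c_\eps^\dagger)^2}{(c^\dagger)^2}\int_\Sigma e^{c^\dagger z}\,\frac{\eps}{2}\,|u_z|^2\,dx,
\]
to produce a strictly positive lower bound when $c_\eps^\dagger\to c<c^\dagger$, contradicting the upper bound $Ce^{-c^\dagger M}$. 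Your route is the standard $\Gamma$-convergence move and bypasses Lemma \ref{lemmaest} entirely; the paper's route has the practical advantage that $S_\psi$ is already known to be a subgraph bounded above with $C^2$ boundary in $\Sigma$ (Theorem \ref{maximalspeedfmc} and Remark \ref{remden}), so the recovery sequence and its $H^1_{c^\dagger}$-integrability come for free, whereas in your version you must first produce, for each $c<c^\dagger$, a sufficiently regular set bounded above with $\mathcal F_c(S)<0$ --- obtainable by smoothing a $\zeta\in BV(\Omega)$ with $G_c(\zeta)<0$ from Proposition \ref{prop}(ii), but an extra step you correctly flag at the end.
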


\begin{proof} 
  We divide the proof into four steps.

  \medskip

  \noindent {\bf Step 1:} we shall prove that $$\liminf_{\eps\to
    0}c_\eps^\dagger\geq c^\dagger.$$ The proof follows by the
  standard Modica-Mortola construction of a recovery sequence
  \cite{momo}. Let $S_\psi$ be as in Theorem
  \ref{maximalspeedfmc}. Then the hypersurface ${\partial
    S_\psi\cap\Sigma}$ is of class $C^2$ on compact subsets of
  $\Sigma$ and of class $C^1$ on compact subsets of $\overline
  \Sigma$. By Proposition \ref{pminFc}(iii) the set $S_\psi$ satisfies
  \begin{equation}\label{minimal}
    c_W\Per_{c^\dagger}(S_\psi, \Sigma)=  \int_{S_\psi} e^{c^\dagger z} g(y)dx. 
  \end{equation} 

  Let now $d_{S_\psi}$ be the signed distance function from $\partial
  S_\psi$, i.e.,
  \[
  d_{S_\psi}(x) := {\rm dist}(x, \Sigma\setminus S_\psi)-{\rm dist}(x,
  S_\psi)
  \] 
  and $\gamma:\R\to \R$ to be the unique solution to $\gamma'=\sqrt{2
    W(\gamma)}$ with $\gamma(0)=\frac{1}{2}$.  Note that the map
  $t\mapsto \gamma(t)$ is monotone increasing and, by Assumption
  \ref{f}, converges exponentially to $0$ for $t\to -\infty$ and to
  $1$ for $t\to +\infty$. For $\eps,M>0$, we let
  \[
  u_{\eps,M}:=
  \gamma\left(\frac{d_{S_{\psi}}}{\eps}\right)\eta\left(\frac{z+M}{\eps}\right),
  \]  
  where $\eta:\R\to [0,1]$ is a smooth increasing function such that
  $\eta(z)=1$ for all $z\ge 1$ and $\eta(z)=0$ for all $z\le 0$.
  Since $\psi$ is bounded from above by Theorem \ref{maximalspeedfmc},
  we have that $u_{\eps,M}\in H^1_{c^\dagger}(\Sigma)$ for all $M$ and
  $\eps$ small enough.  Moreover, for $M>\sup_\Omega \psi$ we have
  $u_{\eps,M}\to \chi_{S_\psi\cap \Sigma_M}$ in $L^1(\Sigma)$ as
  $\eps\to 0$.  For $M>\sup_\Omega \psi$, we compute
  \begin{eqnarray*}
    \Phi^\eps_{c^\dagger}(u_{\eps,M}) &\le&  \int_{\Sigma} e^{c^\dagger z}
    \sqrt{2 W(u_{\eps,M})}|\nabla u_{\eps,M}| dx 
		\\
		&&-\int_{\Sigma} e^{c^\dagger
      z}G(y,u_{\eps,M})dx + Ce^{-c^\dagger M}
    \\
    &=&  \int_\Sigma e^{c^\dagger z} |\nabla \phi(u_{\eps,M})| dx
    -\int_\Sigma e^{c^\dagger z}G(y,u_{\eps,M})dx + Ce^{-c^\dagger M},
  \end{eqnarray*}
  where $\phi$ is as in \eqref{phi}, and the constant $C>0$ is
  independent of $\eps$ and $M$.  Notice that, by the $C^2$-regularity
  of ${\partial S_\psi\cap\Sigma}$, we have
	$$\Per_{c^\dagger}(\{\phi(u_{\eps,M})>t\},\Sigma)
	\to \Per_{c^\dagger}(S_\psi\cap \Sigma_M,\Sigma),$$ for any
        $t\in (0,c_W)$, as $\eps\to 0$.  Recalling the definition of
        $g$ in \eqref{gdef2}, as $\eps \to 0$ we also have that
        $\phi(u_{\eps,M})\to c_W\chi_{S_\psi\cap \Sigma_M}$ in
        $L^1(\Sigma)$. Therefore, we can apply the co-area formula
        (see \cite{afp}) and, possibly increasing the value of $C$, we
        obtain that
  \begin{eqnarray}\nonumber
    \lim_{\eps\to 0}\Phi^\eps_{c^\dagger}(u_{\eps,M})&\le&
    \lim_{\eps\to 0}\Big(\int_0^{c_W}
    \Per_{c^\dagger}(\{\phi(u_{\eps,M})>t\},\Sigma)dt 
    \\\label{conv1}
    &&-\int_\Sigma
    e^{c^\dagger z}G(y,u_{\eps,M})dx  + Ce^{-c^\dagger M}\Big)
    \\  \nonumber
    &=& 
    c_W \Per_{c^\dagger}(S_\psi\cap \Sigma_M,\Sigma)-\int_{S_\psi}
    e^{c^\dagger z}g(y)dx + Ce^{-c^\dagger M} 
    \\\nonumber
    &\le& 
    c_W \Per_{c^\dagger}(S_\psi,\Sigma)-\int_{S_\psi}
    e^{c^\dagger z}g(y)dx + Ce^{-c^\dagger M} 
    \\\nonumber
    &=& Ce^{-c^\dagger M},
  \end{eqnarray} 
where the last equality follows from \eqref{minimal}.

  Assume now by contradiction that there exists a sequence of
  $c_{\eps}^\dagger$ converging to a constant $c<c^\dagger$. By
  \eqref{cvar} we have
  \begin{equation}\label{cvar2}
    \Phi_{c^\dagger}^\eps(u_{\eps,M}) \geq
    \frac{(c^\dagger)^2-(c_\eps^\dagger)^2}{(c^\dagger)^2 }\int_{\Sigma}
    e^{c^\dagger z} \frac{\eps}{2} \left|(u_{\eps,M})_z\right|^2 dx ,
  \end{equation}
  and observe that by the definition of $u_{\eps,M}$ and the
  regularity of $\partial S_\psi$, for $M$ large enough and $\eps$
  small enough independently of $M$, we get that
  \[
  |\nabla \phi(u_{\eps,M})| = \eps |\nabla u_{\eps,M}|^2 \leq 2 \eps
  |(u_{\eps,M})_z|^2,
  \] 
  in a ball $B(x,r)$ for some $r > 0$, where $x = (y,z) \in
  \partial S_\psi$ and $y \in \overline\Omega$ is a point at
  which $\psi$ attains its maximum. Combining these two facts yields
  \begin{eqnarray*}
    \Phi_{c^\dagger}^\eps(u_\eps) &\geq& 
    \frac{(c^\dagger)^2-(c_\eps^\dagger)^2}{4 (c^\dagger)^2 
    }\int_{\Sigma\cap B(x,r)} e^{c^\dagger  z}  
    |\nabla \phi(u_\eps)| dx 
    \\
    &\to& \frac{ (c^\dagger)^2-c^2}{4
      (c^\dagger)^2  } c_W \Per_{c^\dagger}(S_\psi, \Sigma\cap B(x,r))=:L>0,
  \end{eqnarray*}
  as $\eps\to 0$.  This contradicts \eqref{conv1}, by taking $M$ such
  that $Ce^{-c^\dagger M} <L$.

  \medskip

  \noindent {\bf Step 2:} let us now prove (i).  By Proposition
  \ref{pmaxspeede}, $c_\eps^\dagger$ is bounded from above by a
  constant independent of $\eps$.  In particular, there exists $c\in
  [0,+\infty)$ such that $c_\eps^\dagger\to c$ as $\eps\to 0$,
  along a sequence.  By Step 1 we have $c\ge c^\dagger$, so that it is
  enough to prove that $c\le c^\dagger$ for every sequence $\eps \to 0$.
  
  Recall that, for $\eps$ sufficiently small, we have $0 \leq \bar
  u_\eps \leq 2$ and $\ou_\eps(y,z)\leq \frac{1}{2}$ for every
  $y\in\Omega$ and $z>0$. By \eqref{pot} and Theorem
  \ref{maximalspeedrd}(iv), this implies that for $M>0$ and $\eps$
  sufficiently small we have
  \begin{eqnarray} \label{phi1} 0 & = &
    \Phi^\eps_{c_\eps^\dagger}(\ou_\eps) \nonumber \\ & \geq &
    \int_{\Sigma_M}e^{c_\eps^\dagger z} \left( \frac{\eps}{2} |\nabla
      \bar u_\eps|^2 + \frac{W(\bar u_\eps)}{\eps} - G(y, \bar u_\eps)
    \right) dx - \frac{|\Omega|}{c^\dag_\eps} \| G
    \|_{2,\infty} e^{-c^\dag_\eps M}  \nonumber \\
    & \geq & \int_{\Sigma_M}e^{c_\eps^\dagger z}
    \left(\sqrt{2W(\ou_\eps)}|\nabla \ou_\eps |-G(y,\ou_{\eps})
    \right)dx - \frac{2 |\Omega|}{c} \| G \|_{2,\infty} e^{-c M}
    \nonumber \\
    & = & \int_{\Sigma_M}e^{c_\eps^\dagger z} \left( |\nabla
      \phi(\ou_\eps) | -G(y,\ou_{\eps}) \right) dx - \frac{2 |\Omega|}{
      c} \| G \|_{2,\infty} e^{-c M},
  \end{eqnarray}
  where $\phi$ is as in \eqref{phi}.  By Lemma \ref{coer} we get, up
  to a subsequence, that
  \begin{equation}\label{eqstep}\ou_\eps\to\chi_S\qquad \text{in }
    L^1_{loc}(\Sigma),
  \end{equation}
  where $\chi_S \in BV_{loc}(\Sigma)$ and
  $S\subseteq\Omega\times(-\infty,0)$. Moreover, by \eqref{lead} and
  the density estimate in Proposition \ref{densityrd} we have $\int_S
  e^{cz} dx > 0$.

  By the lower semicontinuity in $BV$ (see \cite{afp}) of the
  functional
  \[
  u\mapsto \int_{\Sigma_M}e^{c z} \left( |\nabla \phi(u) | -G(y,u)
  \right) dx,
  \] 
  and by the fact that $\phi(u_\eps)\to \phi(\chi_S)=c_W \chi_S$ in
  $L^1(\Sigma_M)$, we get
  \begin{multline}
    \liminf_{\eps \to 0} \int_{\Sigma_M}e^{c_\eps^\dagger
      z}\left(|\nabla \phi(\ou_\eps)| -G(y,\ou_{\eps})\right)dx \\
    \geq c_W \Per_c(S, \Sigma_M )-\int_{S \cap \Sigma_M} e^{cz} g(y)
    dx. \qquad
    \label{phi2}
  \end{multline}
  Sending now $M \to \infty$, from \eqref{phi1} and \eqref{phi2} we
  conclude that
  \begin{equation}\label{mini}
    \mathcal{F}_c(S)\leq 0,
  \end{equation} 
  which, by Proposition \ref{pminFc}(ii), implies that $c\leq
  c^\dagger$.

  \medskip

  \noindent {\bf Step 3:} we now prove (ii).  By \eqref{convc} it
  follows that \eqref{mini} holds with $c=c^\dagger$. Therefore, by
  Proposition \ref{pminFc}(iii) the inequality in \eqref{mini} is in
  fact an equality, and by Remark \ref{remden} and the density
  estimate \eqref{dens} the set $S$ is a non-trivial minimizer of
  $\mathcal{F}_{c^\dagger}$, satisfying all the desired
  properties. Furthermore, $S$ is the subgraph of a function $\psi :
  \Omega \to [-\infty, \infty)$ that satisfies all the conclusions of
  Theorem \ref{maximalspeedfmc}.

  Let $\theta\in (0,1)$ and assume by contradiction that the level
  sets $\{\bar u_{\eps}=\theta\}$ do not converge to $\partial S$
  locally uniformly in the Hausdorff distance. This means that there
  exist $\delta,M>0$ and points $x_\eps\in\Sigma_M$ such that $\bar
  u_{\eps}(x_\eps)=\theta$ and ${\rm dist}(x_\eps,\partial S)\geq
  \delta>0$.  Up to extracting a subsequence we can assume that
  $x_\eps\in S$ or $x_\eps\in \Sigma\setminus S$, for all
  $\eps$. Assume $x_\eps\in S$, and let $x\in S$, with ${\rm
    dist}(x,\partial S)\geq \delta$, such that $x_\eps\to x\in S$ as
  $\eps\to 0$.  By \eqref{eqstep} we have that $\ou_\eps\to 1$ in
  $L^1(B(x,\delta/2)\cap\Sigma)$, which contradicts the density
  estimate \eqref{densbis}.  If
  $x_\eps\in\Sigma\setminus S$ one can reason analogously,
  contradicting the density estimate \eqref{dens}.

  The locally uniform convergence of $\ou_\eps$ to $\chi_S$
  outside $\partial S$ is a direct consequence of the convergence of
  the level sets $\{\bar u_{\eps}=\theta\}$ to $\partial S$ in the
  Hausdorff sense.

  \medskip

  \noindent {\bf Step 4:} it remains to prove (iii). By Theorem
  \ref{maxspeedunique} there exists a unique minimizer $S$ of
  $\mathcal{F}_{c^\dagger}$ such that $S\subseteq
  \Omega\times(-\infty,0)$ and $\partial S \cap (\overline\Omega
  \times \{0\}) \not= \emptyset$, and so $\ou_\eps \to \chi_S$ in
  $L^1(\Sigma_M)$ for every $M>0$.  Moreover, in this case $S$ is the
  subgraph of a bounded function $\psi$ defined in $\Omega$, so we get
  that $\chi_S(y,z)\equiv 1$ for all $y\in \Omega$ and $z<
  \min_{\overline{\Omega}}\psi$. Then from the locally uniform
  convergence proved in Step 3, the monotonicity of $\ou_\eps(y,z)$ in
  $z$ and the fact that $\ou_\eps \leq 1 + C \eps$ for some $C > 0$
  and $\eps$ small enough (see Theorem \ref{maximalspeedrd}), we have
  $\ou_\eps(y,z)\to 1$ uniformly in $\Omega\times (-\infty,M]$ for
  every $M< \min_{\overline{\Omega}}\psi$. The conclusion then follows
  from the fact that again by Theorem \ref{maximalspeedrd} we have
  $\ou_\eps(y,z)\leq v_\eps(y)\leq 1+C\eps$ for every $(y,z)\in
  \Sigma$.
 \end{proof} 

The result in Theorem \ref{gammaconvergence} allows us to make an
important conclusion about spreading of the level sets of solutions of
the initial value problem with general front-like initial data for
$\eps \ll 1$. We define the {\em leading edge}, i.e., the
quantity\footnote{The definition in \eqref{Rdel} corrects a typo in
  \cite[Eq. (5.1)]{mn1}.}
  \begin{align}
    \label{Rdel}
    R_\theta^\eps(t) := \sup \{z \in \R : \ u^\eps(y, z, t) > \theta \
    \text{for some} \ y \in \Omega \},
  \end{align}
  with $\theta > 0$, for the solution $u^\eps$ of \eqref{rd}. Then the
  following result is an immediate consequence of Theorem
  \ref{gammaconvergence} and \cite[Theorem 5.8]{mn1}.
 
  \begin{Acorollary} 
    \label{r:lead} 
    Let $u^\eps$ be a solution of \eqref{rd} with initial datum
    $u^\eps_0 \in W^{1,\infty}(\Sigma) \cap L^2_c(\Sigma)$ for some $c
    > c^\dag$, where $c^\dag$ is as in Theorem
    \ref{maximalspeedfmc}. Assume that $u_0^\eps \leq 1 + \delta$ in
    $\Sigma$, where $\delta$ is as in Remark \ref{max}, and
    $u^\eps_0(\cdot, z) \geq 1 + C \eps$ for all $z\le M$, for some $M
    \in \mathbb R$, where $C$ is as in \eqref{pit}. Then under
    Assumptions \ref{g}, \ref{f}, \ref{h4} we have
  \begin{align}
    \label{Rdelinf}
    \lim_{\eps \to 0} \lim_{t \to \infty} \frac{R^\eps_\theta(t)}{t} =
    c^\dag,
  \end{align}
  for all $\theta \in (0,1)$, where $R_\theta^\eps(t)$ is given by
  \eqref{Rdel}.
\end{Acorollary}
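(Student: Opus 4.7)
The strategy is to split the double limit and apply \cite[Theorem 5.8]{mn1} to the fixed-$\eps$ reaction-diffusion problem \eqref{rd} to obtain $\lim_{t \to \infty} R^\eps_\theta(t)/t = c^\dag_\eps$, and then pass to the limit $\eps \to 0$ invoking Theorem \ref{gammaconvergence}(i).

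For the first step, I would fix $\eps > 0$ small enough that Theorem \ref{maximalspeedrd} provides a non-trivial minimizer $\ou_\eps \in H^1_{c^\dag_\eps}(\Sigma)$ and that $c^\dag_\eps$ is well defined as the maximal propagation speed for \eqref{rd}, connecting the non-degenerate equilibrium $v_\eps$ (with $E^\eps(v_\eps) < 0$) to $0$. By Theorem \ref{gammaconvergence}(i), $c^\dag_\eps \to c^\dag$ as $\eps\to 0$, so we may further assume $\eps$ is small enough that $c > c^\dag_\eps$; the assumption $u^\eps_0 \in L^2_c(\Sigma)$ then places $u^\eps_0$ in the decay class required by \cite[Theorem 5.8]{mn1}. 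The upper bound $u^\eps_0 \leq 1 + \delta$ combined with Remark \ref{max} guarantees that $0 \leq u^\eps(x,t) \leq 1 + \delta$ for all $(x,t) \in \Sigma \times [0,+\infty)$, providing the uniform $L^\infty$-bound needed to run the gradient-flow arguments of \cite{mn1}. The lower bound $u^\eps_0(\cdot, z) \geq 1 + C \eps$ for all $z \leq M$, together with the monotonicity statement in \eqref{pit}, ensures via the parabolic comparison principle that $u^\eps(\cdot,t)$ stays above a positive subsolution with non-empty trace at $z=-\infty$, so that $u^\eps$ cannot quench to zero in the invasion region. This is precisely the front-like initial datum assumption of \cite[Theorem 5.8]{mn1}, applied to the rescaled problem \eqref{rd} with the nonlinearity $\eps^{-1} f(u) + a(y,u)$ and the equilibrium $v_\eps$ playing the role of the stable state behind the front; hence that theorem yields
\begin{equation*}
\lim_{t \to \infty} \frac{R^\eps_\theta(t)}{t} = c^\dag_\eps
\end{equation*}
for every $\theta \in (0,1)$.

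Passing now to the limit $\eps \to 0$ and using Theorem \ref{gammaconvergence}(i) immediately gives
\begin{equation*}
\lim_{\eps \to 0} \lim_{t \to \infty} \frac{R^\eps_\theta(t)}{t} = \lim_{\eps \to 0} c^\dag_\eps = c^\dag,
\end{equation*}
which is the desired \eqref{Rdelinf}.

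The only delicate point I anticipate is the bookkeeping needed to confirm that the hypotheses of \cite[Theorem 5.8]{mn1}, originally formulated for a single nonlinearity, apply uniformly to the $\eps$-dependent nonlinearity $\eps^{-1} f + a$ once $\eps$ is fixed; this reduces to observing that for each such $\eps$, Theorem \ref{maximalspeedrd} supplies all the structural ingredients (existence of $c^\dag_\eps$, non-degeneracy of $v_\eps$ via \eqref{eqepszero}, uniqueness and monotonicity of $\ou_\eps$) that the proof in \cite{mn1} requires, and that the initial-data decay rate $c > c^\dag$ stays strictly above $c^\dag_\eps$ by virtue of the convergence already established in Theorem \ref{gammaconvergence}(i).
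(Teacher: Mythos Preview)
Your approach is essentially the paper's own: the corollary is stated there as an immediate consequence of \cite[Theorem~5.8]{mn1} (which yields $\lim_{t\to\infty} R^\eps_\theta(t)/t = c^\dag_\eps$ for fixed small $\eps$) and Theorem~\ref{gammaconvergence}(i) (which gives $c^\dag_\eps \to c^\dag$). One small point worth noting: the paper remarks that the validity of the conclusion for \emph{every} $\theta \in (0,1)$ is not quite a direct citation of \cite[Theorem~5.8]{mn1} but requires inspection of its proof together with Theorem~\ref{gammaconvergence}(ii) (the Hausdorff convergence of all level sets of $\bar u_\eps$ to $\partial S$); you assert this range of $\theta$ without further comment, so you may want to add a word there. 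Also, a minor slip: \eqref{eqepszero} is the non-degeneracy of the invaded equilibrium $v=0$, not of $v_\eps$; this is in fact what \cite{mn1} needs, so your citation is apt even if the label is off.
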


Thus, $R_\theta^\eps(t)$ propagates, for $\eps$ small enough,
asymptotically as $t \to \infty$ with the average speed that
approaches $c^\dag$ as $\eps \to 0$. The fact that $\theta$ can be
chosen arbitrarily from $(0,1)$ follows by inspection of the proof of
\cite[Theorem 5.8]{mn1} and the conclusion of Theorem
\ref{gammaconvergence}(ii).

We now investigate the long-time behavior of the solutions of
\eqref{rd} in more detail. Under Assumption \ref{h6}, which is
stronger than our standing Assumption \ref{h4}, we show that the
long-time limit of solutions to \eqref{rd} with front-like initial
data converges, as $\eps\to 0$, to a traveling wave solution to
\eqref{fmc} moving with speed $c^\dagger$.

\begin{Atheorem}\label{stability} 
  Let Assumptions \ref{g}, \ref{f} and \ref{h6} hold.  Let $\delta>0$
  be such that 
  $$(1-u) f(u) > 0 \qquad \text{for all}\qquad u\in [1-\delta,1) \cup
  (1,1+\delta],$$ let $u_{0}^\eps\in W^{1,\infty}(\Sigma)\cap
  L^2_{c^\dagger_\eps}(\Sigma)$ be such that
  \begin{align}
    \label{u0conv}
    0\leq u_0^\eps\leq 1+\delta \qquad \text{and} \qquad \liminf_{z\to
      -\infty} u_{0 }^\eps(y,z) \ge 1 - \delta \ \text{uniformly
      in $\Omega$},
  \end{align}
  and let $u^\eps$ be the solution of \eqref{rd} with initial datum
  $u_0^\eps$. Then there exists $R_\infty \in \mathbb R$ such
    that, for all $M>0$,
  \begin{equation}\label{limlim}
    \lim_{\eps\to 0}\ \lim_{t\to \infty} \|u^\eps(y,z + c_\eps^\dagger t
    + R_\infty, t)-\chi_{S_\psi}(y,z) \|_{L^1(\Sigma_M)} =0\,,
  \end{equation}
  where $\psi$ is given by Theorem \ref{maxspeedunique}. Moreover, the
  convergence as $\eps \to 0$ after passing to the limit $t \to
  \infty$ is locally uniform in $\overline \Sigma \backslash \partial
  S_\psi$.
\end{Atheorem}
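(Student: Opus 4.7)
The plan is to chain two rigidity results in the order suggested by the iterated limit: first, for each $\eps$ sufficiently small, the convergence-to-traveling-wave theorem of \cite{mn3} saying that any front-like solution of \eqref{rd} in the basin of attraction of the maximal-speed variational front $\bar u_\eps$ converges, in the moving frame with speed $c_\eps^\dagger$, to a translate of $\bar u_\eps$ as $t\to\infty$; second, Theorem \ref{gammaconvergence}(iii), which under Assumption \ref{h6} gives the convergence $\bar u_\eps\to\chi_{S_\psi}$ in $L^1_{loc}(\Sigma)$ and locally uniformly away from $\partial S_\psi$. Schematically, with the phase shift $R_\infty$ provided by the attractor result (allowed to depend on $\eps$, since the $t$-limit is taken first),
\begin{equation*}
u^\eps(y,z+c_\eps^\dagger t+R_\infty,t)\ \xrightarrow[t\to\infty]{}\ \bar u_\eps(y,z)\ \xrightarrow[\eps\to 0]{}\ \chi_{S_\psi}(y,z).
\end{equation*}

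To apply the attractor theorem of \cite{mn3} at fixed $\eps$, I would verify three hypotheses: (i) the orbit is trapped in $[0,1+\delta]$, which follows from $u_0^\eps\leq 1+\delta$ and Remark \ref{max}; (ii) the correct exponential decay at $z=+\infty$, which is exactly the assumption $u_0^\eps\in L^2_{c_\eps^\dagger}(\Sigma)$; (iii) a uniform lower bound at $z=-\infty$ placing the solution above the nontrivial stable equilibrium $v_\eps$ from Theorem \ref{maximalspeedrd}(iii). For (iii), the hypothesis $\liminf_{z\to-\infty} u_0^\eps\geq 1-\delta$ combined with the dissipative sign condition $(1-u)f(u)>0$ on $(1-\delta,1)\cup(1,1+\delta)$ allows a standard sub/supersolution construction pushing $u^\eps$ above $1-\delta-o(1)$ for $z\to-\infty$; since $v_\eps\to 1$ uniformly on $\overline\Omega$ by Theorem \ref{gammaconvergence}(iii), for $\eps$ small enough one has $v_\eps\leq 1-o(1)$ and the required ordering is obtained. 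Invoking \cite{mn3} then yields $R_\infty^\eps\in\R$ with
\begin{equation*}
\lim_{t\to\infty}\|u^\eps(\cdot,\cdot+c_\eps^\dagger t+R_\infty^\eps,t)-\bar u_\eps\|_{L^\infty(\Sigma)}=0.
\end{equation*}

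Setting $R_\infty:=R_\infty^\eps$ and applying the triangle inequality
\begin{equation*}
\|u^\eps(\cdot,\cdot+c_\eps^\dagger t+R_\infty,t)-\chi_{S_\psi}\|_{L^1(\Sigma_M)}\leq\|u^\eps(\cdot,\cdot+c_\eps^\dagger t+R_\infty,t)-\bar u_\eps\|_{L^1(\Sigma_M)}+\|\bar u_\eps-\chi_{S_\psi}\|_{L^1(\Sigma_M)},
\end{equation*}
the first term on the right vanishes as $t\to\infty$ by the preceding step, and the second vanishes as $\eps\to 0$ by Theorem \ref{gammaconvergence}(iii); here the uniqueness of $S_\psi$ guaranteed by Assumption \ref{h6} is essential, so that the convergence $\bar u_\eps\to\chi_{S_\psi}$ holds along the full sequence rather than only subsequences. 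The locally uniform convergence on $\overline\Sigma\setminus\partial S_\psi$ after sending $t\to\infty$ and then $\eps\to 0$ follows by the same triangle argument, combining the $L^\infty$ statement from \cite{mn3} with the locally uniform convergence outside $\partial S_\psi$ given by Theorem \ref{gammaconvergence}(ii).

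The main obstacle I anticipate is the rigorous verification of the front-like basin-of-attraction hypothesis (iii) for the attractor theorem of \cite{mn3}; this is a purely fixed-$\eps$ question but requires care in constructing the $z\to-\infty$ barrier in the presence of the spatially varying perturbation $a(y,u)$, and in matching it to the $\eps$-dependent equilibrium $v_\eps$. Modulo this barrier construction and the citation of \cite{mn3}, the rest of the argument is a direct combination of a fixed-$\eps$ attractor theorem with the $\eps\to 0$ $\Gamma$-convergence of Theorem \ref{gammaconvergence}.
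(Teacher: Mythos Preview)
Your overall strategy is the same as the paper's: chain the attractor theorem from \cite{mn3} (inner limit $t\to\infty$) with Theorem \ref{gammaconvergence}(iii) (outer limit $\eps\to 0$), using the uniqueness from Assumption \ref{h6} to avoid subsequences. The triangle-inequality packaging and the subsolution observation $u=1-\delta$ are also exactly what the paper uses.

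However, you have misidentified the main obstacle. The attractor results in \cite{mn3} (Theorem 1 and Corollary 2.1 there) require that the equilibrium $v_\eps=\lim_{z\to -\infty}\bar u_\eps(\cdot,z)$ be a \emph{nondegenerate} stable critical point of $E^\eps$, not merely stable. Theorem \ref{maximalspeedrd}(iii) only gives stability, and nondegeneracy is not automatic. This is precisely the hypothesis the paper singles out as the one thing left to check, and it is supplied by Lemma \ref{unique}: since $v_\eps\to 1$ uniformly on $\overline\Omega$ (Theorem \ref{gammaconvergence}(iii)) and $W''>0$ near $1$, for $\eps$ small one has $\eps^{-1}W''(v_\eps)-G_{uu}(y,v_\eps)>0$ pointwise, forcing strict positivity of the second variation. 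You should insert this verification explicitly; without it the citation of \cite{mn3} is not justified.

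By contrast, the barrier issue you flag as ``the main obstacle'' is handled in one line in the paper: the sign condition on $f$ near $u=1$ makes $u\equiv 1-\delta$ a subsolution of \eqref{rd} for all small $\eps$, so the basin-of-attraction parameter $\alpha$ from \cite{mn3} can be taken independent of $\eps$. (Incidentally, your sentence ``for $\eps$ small enough one has $v_\eps\leq 1-o(1)$'' is garbled; $v_\eps\to 1$ means $v_\eps$ is \emph{close} to $1$, and what you need is that the initial datum eventually sits above any fixed level just below $v_\eps$.)
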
 

\begin{proof} 
  The proof follows from Theorem \ref{gammaconvergence} and from the
  stability results in Theorem 1 and Corollary 2.1 of \cite{mn3},
  which apply to the solutions of the initial value problem for
  \eqref{rd} under the additional assumption that
  $v_\eps(y)=\lim_{z\to -\infty} \ou_\eps(y,z)$ is a nondegenerate
  stable critical point of $E^\eps$. We note that under our
  assumptions the value of $\alpha$ in Theorems 1 and 3 of \cite{mn3}
  does not depend on the parameter $\eps$. This is due to the fact
  that $u=1 - \delta$ is a subsolution for \eqref{rd} for all
 $\eps$ small enough. Thus, to conclude
  we only need to demonstrate that under the assumptions of the
  theorem $v_\eps$ is indeed non-degenerate. This is proved in Lemma
  \ref{unique} below.
 \end{proof} 

\begin{Alemma}\label{unique} 
  Let Assumptions \ref{g}, \ref{f} and \ref{h6} hold and let
  $(c_\eps^\dagger, \ou_\eps)$ be as in Theorem
  \ref{maximalspeedrd}.  Then there exists $\eps_0>0$ such that for
  every $0<\eps<\eps_0$, $v_\eps(y)=\lim_{z\to -\infty} \ou_\eps(y,z)
  $ is a nondegenerate stable critical point of $E^\eps$.
\end{Alemma}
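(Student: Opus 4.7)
The plan is to exploit the uniform convergence $v_\eps \to 1$ under Assumption \ref{h6}, combined with the non-degeneracy of the double-well potential $W$ at the minimum $u=1$. The rough picture is that for small $\eps$ the term $\eps^{-1} W''(v_\eps)$ dominates $G_{uu}(y, v_\eps)$ pointwise, forcing the second variation in \eqref{secondvar} to be strictly positive.

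First, I would invoke Theorem \ref{gammaconvergence}(iii), which gives $v_\eps \to 1$ uniformly on $\overline{\Omega}$ as $\eps \to 0$. Since by Assumption \ref{f} we have $W''(1) = -f'(1) > 0$ and $W \in C^{2,\alpha}_{loc}$, continuity of $W''$ together with the uniform convergence yields an $\eps_0 > 0$ such that
$$
W''(v_\eps(y)) \;\geq\; \tfrac{1}{2} W''(1) \;=:\; \mu \;>\; 0 \qquad \forall\, y \in \overline{\Omega},\ \forall\, \eps < \eps_0.
$$

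Second, since $G_{uu} = a_u \in C^\alpha_{loc}(\overline{\Omega}\times\R)$ by Assumption \ref{g}, and since by Theorem \ref{maximalspeedrd}(iii) we have $0 \leq v_\eps \leq 1 + C\eps$, the values $G_{uu}(y, v_\eps(y))$ lie in a compact subset of $\overline\Omega \times \mathbb{R}$ uniformly in $\eps$ for $\eps$ small. Hence there is a constant $K$ independent of $\eps$ with
$$
\|G_{uu}(\cdot, v_\eps(\cdot))\|_{L^\infty(\Omega)} \;\leq\; K.
$$

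Third, inserting these two bounds into the quadratic form appearing in Definition \ref{locmin}, for every $\phi \in H^1(\Omega)$ with $\phi \not\equiv 0$ and $\eps < \eps_0$ we then get
$$
\int_\Omega \Big( \eps|\nabla \phi|^2 + \big(\eps^{-1} W''(v_\eps) - G_{uu}(y, v_\eps)\big)\phi^2 \Big)\,dy \;\geq\; \left(\frac{\mu}{\eps} - K\right) \int_\Omega \phi^2\,dy \;>\; 0,
$$
provided $\eps$ is further reduced so that $\mu/\eps > K$. This is strict inequality in \eqref{secondvar}, which is exactly the definition of non-degeneracy.

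There is essentially no serious obstacle: the whole argument is a quantitative ellipticity estimate made possible by the uniform $v_\eps \to 1$ convergence from Theorem \ref{gammaconvergence}(iii) and the non-degeneracy $W''(1)>0$ built into Assumption \ref{f}. The only point requiring a line of justification is that the constants $\mu$ and $K$ are genuinely $\eps$-independent, and both follow immediately from the stated hypotheses and the uniform bound $v_\eps \leq 1+C\eps$.
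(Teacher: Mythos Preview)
Your proof is correct and follows essentially the same approach as the paper: invoke Theorem~\ref{gammaconvergence}(iii) for the uniform convergence $v_\eps\to 1$, use $W''(1)>0$ together with continuity to get a uniform lower bound $W''(v_\eps)\geq\mu>0$, and then observe that $\eps^{-1}W''(v_\eps)$ dominates the bounded term $G_{uu}(\cdot,v_\eps)$ for small $\eps$, making the quadratic form in \eqref{secondvar} strictly positive. The paper's argument is identical in structure, differing only in that it phrases the bound as $\eps^{-1}W''(s)-G_{uu}(y,s)>0$ on a fixed neighborhood $[1-\delta,1+\delta]$ rather than tracking explicit constants $\mu,K$.
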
 
\begin{proof}
  By Theorem \ref{gammaconvergence}(iii), we have that $v_\eps\to 1$ 
  uniformly in $\overline\Omega$. 
  Fix $\delta>0$ such that $W''(u)>0$ for every $u\in
  [1-\delta,1+\delta]$.  Let $\eps_0$ be such that for all
  $\eps<\eps_0$ we have $v_\eps(y)\in (1-\delta, 1+\delta)$ for all
  $y\in \overline{\Omega}$. Moreover, eventually decreasing $\eps_0$,
  we have that \[\frac{W''(s) }{\eps} - G_{uu} (y,s)> 0 \qquad\forall
  y\in\overline{\Omega}, \ s\in [1-\delta, 1+\delta], \ \eps<\eps_0.\]
  This implies that $v_\eps$ is a non degenerate stable critical point
  of $E_\eps$.
 \end{proof}

\begin{Aremark}\rm
To derive the stability result in Theorem \ref{stability}, it is
  essential that the local minimizer $v_\eps$ of $E^\eps$ to which the
  traveling wave $(c_\eps^\dagger, \ou_\eps)$ is converging as $z\to
  -\infty$ is non-degenerate, according to Definition \ref{locmin}.
In general the assumption that $v_\eps$ is nondegenerate is quite
  difficult to check, even if it is generically satisfied, see the
  discussion in \cite{mn3}.  In Lemma \ref{unique}, we show that a
  sufficient condition for it is Assumption \ref{h6}, together with
  Assumptions \ref{g} and \ref{f}. More generally,
we expect that the same nondegeneracy condition on
$v_\eps$ is generically true, when there is at most one set
$\omega \subseteq \Omega$ such that $\omega \times \R$ is a
minimizer under compact perturbations of the geometric functional
$\mathcal{F}_{c^\dagger}$ and, moreover, this unique local minimizer
has positive second variation.
\end{Aremark}

Lastly, we briefly discuss what kinds of counterparts to our
propagation results can be obtained, using the methods of 
\cite{bss,barles}. We note that because of the local in time nature of
convergence in \cite{bss,barles}, the order of the limits in
\eqref{limlim} in such results needs to be reversed. Then the
conclusion can be obtained via the analysis of the long time limit of
\eqref{fmc}, as is done, e.g., in \cite{cn}. To be specific, if the
initial data $u^\eps_0$ converge to $\chi_{S_{h_0}}$ locally uniformly
out of $\partial S_{h_0}$, for some $h_0\in W^{1,\infty}(\Omega)$,
then by \cite{bss,barles} the solutions $u^\eps$ of \eqref{rd} with
initial data $u^\eps_0$ converge locally uniformly to $\chi_{S_{h}}$,
where $h$ is the solution of \eqref{fmc} with initial datum
$h_0$. Since by \cite{cn}, under Assumption \ref{h6}, the function
$h(y,t)- c^\dagger t -R_\infty$ converges uniformly to $\psi(y)$ as
$t\to +\infty$, it follows that
\begin{equation}\label{lamlam}
  \lim_{t\to \infty} \ \lim_{\eps\to 0}\, \|u^\eps(y,z + c^\dagger t
  + R_\infty, t)-\chi_{S_\psi}(y,z) \|_{C(K)} =0\,, 
\end{equation}
for any compact set $K\subset\overline\Sigma\setminus \partial
S_\psi$. Thus, the expectation about the long time behavior of
solutions of \eqref{rd} for $\eps \ll 1$ based on the analysis of the
mean curvature flow that follows from \eqref{lamlam} is justified by
our result in Theorem \ref{stability}.

\medskip

\paragraph*{Acknowledgements.}
The work of {\sc Annalisa Cesaroni} and {\sc Matteo Novaga} was
partially supported by the Fondazione CaRiPaRo Project ``Nonlinear
Partial Differential Equations: models, analysis, and
control-theoretic problems.'' The work of {\sc Cyrill B. Muratov} was
supported, in part, by NSF via grants DMS-0908279, DMS-1119724 and
DMS-1313687.

\appendix
  
\section*{Appendix}

\section{Density estimates}
\label{sec:appendix}

In this Appendix we establish a general density estimate in the spirit
of \cite{cc,fv,nv} for minimizers of Allen-Cahn type
functionals. Note, however, that our estimates are in terms of the
averages of the $L^2$ norms of the minimizers with respect to
compactly supported perturbations, rather than in terms of the
densities associated with their superlevel sets. The key ingredient of
the proof is still an application of the Gagliardo-Nirenberg-Sobolev
inequality, as in \cite{cc,fv,nv}. However, the use of a simpler test
function and of $L^2$ estimates makes the proof considerably more
straightforward. In fact, our proof is in some sense more along the
lines of the respective density estimates for minimal surfaces and
relies in an essential way on the Modica-Mortola trick
\cite{momo}. Also, we point out that our functionals, as in \cite{nv}
and in contrast to \cite{cc,fv}, do not necessarily admit minimizers
that are constants. Our assumptions are more general than those of
\cite{nv}, however, since they do not require $G(\cdot, u)$ to have
zero mean.

\begin{Atheorem}
  \label{p:densL2}
  For $\rho > 2$ and $u \in H^1(B(0,\rho)) \cap
  L^\infty(B(0,\rho))$, let
  \begin{align}
    \label{HH}
    H(u) := \int_{B(0, \rho)} \Big(( a(x) \nabla u) \cdot \nabla u +
    b(x) W(u) + G(x, u) \Big) dx,
  \end{align}
  where $W$ is defined by \eqref{pote} with $f$ satisfying Assumption
  \ref{f}, $a(x)$ is a symmetric $n \times n$ matrix, $a \in
  W^{1,\infty}(B(0, \rho); \R^{n \times n})$, $b \in L^\infty(B(0,
  \rho))$, $G(x, u)$ is a Carath\'eodory function, and $a$ and $b$
  satisfy
  \begin{align}
    \label{Hlam}
    \lambda \leq b(x) \leq \lambda^{-1} \quad \text{and} \quad \
    \lambda |\xi|^2 \leq (a(x) \xi) \cdot \xi \leq \lambda^{-1}
    |\xi|^2 \qquad \ \forall x \in B(0, \rho), \ \forall \xi \in \R^n,
  \end{align}
  for some $\lambda > 0$. Then there exists $r_0 \in \mathbb N$
  depending only on $n$, $W$, $\| a \|_{W^{1,\infty}(B(0, \rho); \R^{n
      \times n})}$ and $\lambda$ such that if $u$ is a minimizer of
  $H$ with prescribed boundary data on $\partial B(0, \rho)$, $\|u -
  \frac12 \|_{L^\infty(B(0, \rho))} \leq 1$, $\alpha \in (0,
  r_0^{1-n})$, $R_0$ is an integer such that $r_0 + 1\leq R_0 < \rho$,
  and $\| G \|_{L^\infty(B(0, \rho)\times (-\frac12,\frac32))} \leq
  \alpha R_0^{-1}$, then
  \begin{align}
    \label{eq:densL2}
    \dashint_{B(0, r_0)} u^2 dx \geq \alpha \qquad &\Rightarrow \qquad
    \dashint_{B(0, R)} u^2 dx \geq \alpha,
    \\
    \label{eq:densL3}
    \dashint_{B(0, r_0)} (1-u)^2 dx \geq \alpha \qquad &\Rightarrow
    \qquad \dashint_{B(0, R)} (1-u)^2 dx \geq \alpha,
  \end{align}
  for all $R\in [r_0, R_0]$ integer.
\end{Atheorem}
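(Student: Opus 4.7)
I plan to prove estimate \eqref{eq:densL2} by induction on integer $R \in [r_0, R_0]$; the companion estimate \eqref{eq:densL3} will follow by the analogous argument applied to $1 - u$ in place of $u$, which preserves the double-well structure of the functional (with the roles of the wells at $0$ and $1$ interchanged) up to an absorbable redefinition of $G$. The base case $R = r_0$ is the hypothesis. For the inductive step, I assume $\dashint_{B(0, R)} u^2 dx \geq \alpha$ at some $R \in [r_0, R_0 - 1]$ and aim to derive a contradiction from $\dashint_{B(0, R+1)} u^2 dx < \alpha$.

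The heart of the argument is a competitor comparison. I take $\tilde u(x) := u(x) \phi(|x|)$, where $\phi : [0, \infty) \to [0, 1]$ is the radial Lipschitz cutoff equal to $0$ on $[0, R]$, $1$ on $[R+1, \infty)$, and linear in between. Since $R + 1 \leq R_0 < \rho$, $\tilde u$ agrees with $u$ on $\partial B(0, \rho)$, so minimality yields $H(u) \leq H(\tilde u)$. Expanding $|\nabla \tilde u|^2 \leq 2(|\nabla u|^2 + u^2)$ on the transition annulus $A := B(0, R+1) \setminus B(0, R)$ and using $\|G\|_{L^\infty} \leq \alpha/R_0 \leq \alpha/R$ gives the upper bound
\[
\int_{B(0, R)} \big[(a \nabla u) \cdot \nabla u + b W(u)\big] dx \leq C \int_A \big(|\nabla u|^2 + u^2 + 1\big) dx + C \alpha R^{n-1},
\]
where $C$ depends only on $\lambda$ and $\|W\|_{L^\infty([-1/2, 3/2])}$.

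To absorb $\int_A |\nabla u|^2$, I apply a Caccioppoli-type inequality, testing the Euler-Lagrange equation $2 \nabla \cdot (a \nabla u) = b W'(u) + G_u(\cdot, u)$ against $u \eta^2$ for a smooth cutoff $\eta \equiv 1$ on $A$ supported in a slightly enlarged annulus $A' \subset B(0, \rho)$; the bound $|W'(u)| \leq L|u|$ (from $W'(0) = 0$ and Assumption \ref{f}) controls the $W'$ contribution, while the $G_u$ contribution is tamed by integrating by parts to shift the $u$-derivative off $G$, leaving only $|G|$. The result is $\int_A |\nabla u|^2 dx \leq C \int_{A'} u^2 dx + C|A'|$, which combined with the induction hypothesis and the contradiction assumption (giving $\int_A u^2 \leq C \alpha R^{n-1}$) yields $\int_{B(0, R)} [(a\nabla u)\cdot \nabla u + b W(u)] dx \leq C_1(\alpha + 1) R^{n-1}$. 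For the matching lower bound I would use the Modica-Mortola pointwise inequality $(a \nabla u) \cdot \nabla u + b W(u) \geq \sqrt{2} \lambda |\nabla \Phi(u)|$ with $\Phi(u) := \int_0^u \sqrt{2W(s)} ds$, together with the co-area formula, the relative isoperimetric inequality in $B(0, R)$, and the Gagliardo-Nirenberg-Sobolev inequality applied to $\Phi(u)\zeta$ for a cutoff $\zeta$ on $B(0, R)$. Chebyshev converts the $L^2$ density into $|\{|u| \geq \sqrt{\alpha/2}\} \cap B(0, R)| \geq c \alpha R^n$, on which $\Phi(u) \gtrsim \alpha$ (using $W(u) \sim u^2$ near the well at $0$), producing a quantitative lower bound on the LHS.

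The hardest step is the final comparison: carefully calibrating the Caccioppoli, Sobolev, and isoperimetric constants so that the smallness $\alpha < r_0^{1-n}$ forces a contradiction for every integer $R \in [r_0, R_0 - 1]$. The argument likely requires splitting into two regimes, according to whether the mass of $u$ in $B(0, R)$ is concentrated near the well at $0$ (in which case $W(u) \geq c u^2$ yields a bulk lower bound of order $\alpha R^n$) or whether $u$ takes values near both wells on substantial portions of $B(0, R)$ (where the Modica-Mortola surface bound, through the co-area and isoperimetric machinery, dominates). Choosing $r_0$ as the maximum of the thresholds required in each regime, depending only on $n$, $W$, $\lambda$, and $\|a\|_{W^{1,\infty}}$, the smallness condition $\alpha < r_0^{1-n}$ can be shown to be precisely what is needed to dominate the residual "$+1$" terms and close the contradiction.
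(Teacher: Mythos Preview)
Your overall architecture---competitor $u\phi$ with a radial cutoff, Modica--Mortola, Sobolev, and an induction on integer radii---is the same as the paper's. The genuine gap is in how you handle the gradient term on the annulus $A=B(0,R+1)\setminus B(0,R)$. By using the crude bound $|\nabla(u\phi)|^2 \leq 2(|\nabla u|^2 + u^2)$ you leave $\int_A |\nabla u|^2$ on the right-hand side, and your proposed fix via a Caccioppoli inequality tested against the Euler--Lagrange equation does not work under the stated hypotheses: $G$ is only assumed Carath\'eodory with small $L^\infty$ norm, so $G_u$ need not exist, and there is no way to ``integrate by parts in $u$'' to convert a $G_u$ term into a $G$ term inside a spatial integral. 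Even granting a Caccioppoli bound, the residual $|A'|$ term is of order $R^{n-1}$ and is \emph{not} small in $\alpha$, so your upper bound collapses to $C_1(\alpha+1)R^{n-1}\sim R^{n-1}$; your sketched lower bound via Chebyshev and isoperimetry does not obviously produce a constant that can be forced to beat this by taking $r_0$ large.

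The paper avoids the gradient term on $A$ altogether by expanding the quadratic form exactly rather than crudely:
\[
(a\nabla(u\eta))\cdot\nabla(u\eta) = \eta^2(a\nabla u)\cdot\nabla u + \tfrac12(a\nabla u^2)\cdot\nabla\eta^2 + u^2(a\nabla\eta)\cdot\nabla\eta,
\]
and then integrating the cross term $\tfrac12(a\nabla u^2)\cdot\nabla\eta^2$ by parts in $x$ (this is precisely where $a\in W^{1,\infty}$ enters), which leaves only zero-order $u^2$ terms on $A$ and no $|\nabla u|^2$. Together with $W(u\eta)\leq C(u\eta)^2\leq Cu^2$ (so your ``$+1$'' is also unnecessary), this yields
\[
\int_{B(0,R)}\big(|\nabla u|^2 + W(u)\big)\,dx \ \leq\ C\Big(R^n G_0 + \int_A u^2\,dx\Big).
\]
From here the paper does \emph{not} seek a lower bound on the energy; instead it splits $\int_{B(0,R)}u^2$ into the region $\{|u|\leq\tfrac12\}$ (controlled by $W(u)\geq cu^2$) and $\{|u|>\tfrac12\}$ (controlled by Modica--Mortola and Gagliardo--Nirenberg--Sobolev applied to $(1-\eta^2)\phi(u)$, with $\phi$ as in \eqref{phi}), obtaining a clean recursion $M_{R+1}\geq K(u,R)\,M_R$ for $M_R:=\int_{B(0,R)}u^2 + |B(0,1)|R^{n+1}G_0$, from which the contradiction at the first bad radius follows directly once $r_0$ is chosen by an explicit formula.
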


\begin{proof}
  We only prove \eqref{eq:densL2}, since \eqref{eq:densL3} then
  follows by a change of variable $u \to1-u$.  Let $\theta \in
  C^\infty(\R)$ with $\theta(x) = 0$ for all $x < 0$, $\theta(x) = 1$
  for all $x > 1$ and $\theta'(x) \geq 0$ for all $x \in \R$. For $1
  \le R < \rho-1$, let $\eta(x) := \theta(|x|-R)$ be a cutoff
  function. Since $u$ is a minimizer of $H$ with respect to
  perturbations supported in $B(0, \rho)$, we have $H(u) \leq H(u
  \eta)$. Then by positivity of $a$, $b$ and $W$ and the fact that
  $\eta = 0$ in $B(0,R)$, we obtain
  \begin{multline}
    \label{Htest}
    \int_{B(0, R+1)} (1 - \eta^2) \Big( (a(x) \nabla u) \cdot \nabla u
    + b(x) W(u) \Big) dx \leq
    2 |B(0,R+1)| G_0 \\
    + \int_{B(0, R + 1) \backslash B(0,R)} \left( \frac12 (a(x) \nabla
      u^2) \cdot \nabla \eta^2 + u^2 (a(x) \nabla \eta) \cdot \nabla
      \eta + b(x) W(u \eta) \right) dx,
  \end{multline}
  where we introduced
  \begin{align}
    \label{HG0}
    G_0 := \| G \|_{L^\infty(B(0, \rho) \times (-\frac12, \frac32))}.
  \end{align}
  Integrating by parts the first term in the integral on the
  right-hand side of \eqref{Htest} and using the assumptions on $a$
  and $b$ in the left-hand side, we have
  \begin{multline}
    \label{Htest2}
    \lambda \int_{B(0, R+1)} (1 - \eta^2) \Big( |\nabla u|^2 + W(u)
    \Big) dx \leq
    2 |B(0,R+1)| G_0 \\
    + \int_{B(0, R + 1) \backslash B(0,R)} \Big( b(x) W(u \eta) - u^2
    \eta\nabla \cdot (a \nabla \eta) \Big) dx. \qquad
  \end{multline}
  Therefore, since our assumptions imply that $W(s) \leq C_1 s^2$ for
  all $|s| \leq \frac32$ and some $C_1 > 0$ depending only on $W$, we
  have
 \begin{multline}
    \label{Htest3}
    \int_{B(0, R+1)} (1 - \eta^2) \Big( |\nabla u|^2 + W(u) \Big) dx \\
    \leq C \left( R^n G_0 + \int_{B(0, R + 1) \backslash B(0,R)} u^2
      dx \right),
  \end{multline}
  for some constant $C > 0$ depending only on $n$, $W$, $\| a
  \|_{W^{1,\infty}(\Om; \R^{n \times n})}$ and $\lambda$, which
  changes from line to line from now on. In particular, since by our
  assumptions $W(s) \geq C_2 s^2$ for all $|s| \leq \frac12$ and some
  $C_2 > 0$ depending only on $W$, we obtain
  \begin{align}
    \label{HWL2}
    \int_{B(0, R) \cap \{ |u| \leq \frac12 \} } u^2 dx \leq C \left(
      R^n G_0 + \int_{B(0, R + 1) \backslash B(0,R)} u^2 dx \right).
  \end{align}

  We now use the Modica-Mortola trick \cite{momo} and estimate the
  left-hand side of \eqref{Htest3} from below as follows:
  \begin{align}
    \label{Hmomo}
    \int_{B(0, R+1)} |(1 - \eta^2) \nabla \phi(u)| dx \leq C \left(
      R^n G_0 + \int_{B(0, R + 1) \backslash B(0,R)} u^2 dx \right),
  \end{align}
  where $\phi(u)$ is defined via \eqref{phi}. Therefore, with the help
  of Gagliardo-Nirenberg-Sobolev inequality we get
  \begin{multline}
    \label{HGNS}
    \left( \int_{B(0, R+1)} |(1 - \eta^2) \phi(u)|^{\frac{n}{ n-1}} dx
    \right)^{\frac{n-1}{ n}} \\ \leq C \left(R^n G_0 + \int_{B(0, R + 1)
        \backslash B(0,R)} \left( u^2 + \phi(u) |\nabla \eta^2|
      \right) dx \right).
  \end{multline}
  Moreover, since by our assumptions $C_3 s^2 \leq |\phi(s)| \leq C_4
  s^2$ for all $|s| \leq \frac32$ and some $C_3, C_4 > 0$ depending
  only on $W$, we have
  \begin{align}
    \label{HSob}
    \left( \int_{B(0, R)} |u|^{2\frac{n}{ n-1}} dx
    \right)^{\frac{n-1}{ n}} \leq C \left( R^n G_0 + \int_{B(0, R + 1)
        \backslash B(0,R)} u^2 dx \right).
  \end{align}
  Raising both sides of this inequality to the power $n/(n-1)$,
  we obtain
  \begin{align}
    \label{HSob2}
    \int_{B(0, R) \cap \{ |u| > \frac12 \} } u^2 dx \leq C \left( R^n
      G_0 + \int_{B(0, R + 1) \backslash B(0,R)} u^2 dx \right)^{\frac{n}{ n-1}} .
  \end{align}

  Let us introduce the quantity
 \begin{align}
    \label{HM}
    M_R := \int_{B(0,R)} u^2 dx + |B(0,1)| R^{n+1} G_0.
  \end{align}
  Adding \eqref{HWL2} and \eqref{HSob2}, and expressing the
  result in terms of $M_R$ yields
  \begin{multline}
    \label{HL2}
    M_R - |B(0,1)| R^{n+1} G_0 \leq \\
    C \left( 1 + \left( R^n G_0 + \int_{B(0, R + 1) \backslash B(0,R)}
        u^2 dx \right)^{\frac{1}{n-1} }\right) (M_{R+1} - M_R).
  \end{multline}
  We can rewrite the inequality in \eqref{HL2} in the form
  \begin{align}
    \label{HL2L2}
    M_{R+1} \geq K(u, R) M_R,
  \end{align}
  where
  \begin{align}
    \label{HK}
    K(u, R) := 1 + \frac{C \left( 1 - 
	\frac{R G_0}{\dashint_{B(0,R)} u^2 dx
          + R G_0 } \right)}{ 1 + R \left( R G_0 + \dashint_{B(0,
          R + 1) \backslash B(0,R)} u^2 dx \right)^{\frac{1}{n-1}}
    }\geq 1.
  \end{align}

  Let now $r_0 \in \mathbb{N}$, and let $R_0 \in \mathbb N$ and
  $\alpha\in\R$ be such that $r_0+1 \leq R_0 < \rho$ and $\alpha\in
  (0,r_0^{1-n})$.  If $\dashint_{B(0, R + 1)} u^2 dx \geq \alpha$ for
  all $r_0 \leq R \leq R_0-1$ integer, then there is nothing to
  prove. So suppose the opposite inequality holds for some integer
  $r_0 \leq R_1 \leq R_0-1$, and that $R_1$ is the smallest value of
  $R$ for which this happens. Then
  \begin{align}
    \label{HaveRbar}
    \dashint_{B(0, R_1)} u^2 dx \geq \alpha \quad \text{and} \ \quad
    \dashint_{B(0, R_1+1)\backslash B(0,R_1)} u^2 dx < \alpha\,,
  \end{align}
  and we can estimate $K(u, R_1)$ from below as
  \begin{align}
    \label{HK2}
    K(u, R_1) & \geq 1 + \frac{C \alpha }{\left( R_0 G_0 + \alpha
      \right) \left( 1 + R_1 \left( R_0 G_0 + \alpha
        \right)^{\frac{1}{n-1}}  \right) } \notag \\
    & \geq 1 + \frac{C}{1 + R_1 \alpha^{\frac{1}{n - 1}} } \geq 1 + \frac{C
      r_0 }{2 R_1 }\,.
  \end{align}
  By \eqref{HM} and \eqref{HL2L2}, this implies that
  \begin{align}
    \label{Havs}
    \int_{B(0,R_1+1)} u^2 dx + 2^n (n+1) |B(0,1)| R_1^n G_0 \geq
    \left( 1 + \frac{C r_0}{R_1 } \right) \int_{B(0,R_1)} u^2 dx,
  \end{align}
  and, hence, by our assumptions and \eqref{HaveRbar} we obtain 
  \begin{align}
    \label{Hcontrad}
    \dashint_{B(0,R_1+1)} u^2 dx \geq \left( 1 + \frac{1}{R_1}
    \right)^{-n} \left( 1 + \frac{C r_0 - 2^n (n+1)}{R_1} \right)
    \alpha.
  \end{align}
  Since $R_1 \geq 1 $, choosing 
  \begin{align}
    \label{Hr0}
    r_0 = \left\lceil \frac{2^n (n+2)-1}{C} \right\rceil,
  \end{align}
  where $C$ is the constant appearing in \eqref{Hcontrad}, we get that
  the right-hand side of \eqref{Hcontrad} is greater or equal than
  $\alpha$, contradicting our assumption on $R_1$.
 \end{proof}

\begin{Aremark}\rm
  An inspection of the proof of Theorem \ref{p:densL2} shows that  
	$\dashint_{B(0,R)} u^2 dx$ is monotonically increasing in
  $R \in \mathbb N$, as long as it is not too big. More precisely,
  under the assumptions of Theorem \ref{p:densL2}, we have that
  $\dashint_{B(0,R)} u^2 dx$ is monotonically increasing for all $R
  \in [r_0, R_0]$ integer, provided that $\dashint_{B(0,R)} u^2 dx <
  r_0^{1-n}$ and $\| G \|_{L^\infty(B(0, \rho) \times (-\frac12,
    \frac32))} \leq r_0^{1-n} R_0^{-1}$.
\end{Aremark}

We also note that Theorem \ref{p:densL2} yields the kinds of density
estimates for the level sets of the minimizers of Ginzburg-Landau
functionals with respect to compactly supported perturbations that
were previously obtained in \cite{cc,fv,nv}. Here we give a result
that extends those of \cite{cc,fv,nv} to the case of the functional
$H$ in \eqref{HH}, generalizing the estimates obtained in \cite{cc,fv}
for the case of functionals that admit constant minimizers, and the
estimates of \cite{nv} under assumption of periodicity of $G$.

\begin{Acorollary}
  Under the assumptions of Theorem \ref{p:densL2}, let $\beta \in
  (0,1)$, and for $R > 0$ let
  \begin{align}
    \mu_{\beta,R} := |\{ |u| > \beta \} \cap B(0,R)|.
  \end{align}
  If $\mu_{\beta,1} > 0$, there exist $C, C' > 0$ depending on $n$,
  $W$, $\| a \|_{W^{1,\infty}(B(0, \rho); \R^{n\times n})}$,
  $\lambda$, $\beta$ and $\mu_{\beta,1}$, such that
  \begin{align}
    \label{HmuR}
    \mu_{\beta,R} \geq C R^n,
  \end{align}
  for all $R \in [1, \rho]$ satisfying $R \leq C' \| G
  \|^{-1}_{L^\infty(B(0, \rho) \times (-\frac12, \frac32))} $.
\end{Acorollary}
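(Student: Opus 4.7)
The plan is to reduce the level-set density estimate to the $L^2$ density estimate provided by Theorem \ref{p:densL2}, apply that theorem, and then translate the $L^2$ bound back to a level-set bound by using the auxiliary energy inequality that appears in the proof of that theorem. First, since the hypothesis $\|u-\frac12\|_\infty\leq 1$ gives $|u|\leq \frac32$ throughout $B(0,\rho)$, the bound $\mu_{\beta,1}>0$ yields $\int_{B(0,r_0)} u^2\,dx \geq \int_{B(0,1)} u^2\,dx \geq \beta^2\mu_{\beta,1}$, where $r_0$ is the universal integer from Theorem \ref{p:densL2}. Setting $\alpha := \min\{\beta^2\mu_{\beta,1}/|B(0,r_0)|,\,\tfrac12 r_0^{1-n}\}$ makes $\alpha$ admissible for Theorem \ref{p:densL2}, so for every integer $R \in [r_0, R_0]$ with $R_0 \leq \alpha/\|G\|_\infty$ one has $\int_{B(0,R)} u^2\,dx \geq \alpha|B(0,R)|$; the monotonicity of $R\mapsto\int_{B(0,R)}u^2\,dx$ extends this to all real $R$ in the range up to a dimensional factor. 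This already forces $C'=\alpha$ in the statement of the corollary.

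The next step combines this $L^2$ lower bound with the energy inequalities extracted from the proof of Theorem \ref{p:densL2}. Since $|u|\leq \tfrac32$, the annular term $M_{R+1}-M_R$ is at most $C(R^{n-1}+R^n\|G\|_\infty)$, so inequality \eqref{HWL2} from that proof yields
\[
\int_{B(0,R)\cap\{|u|\leq 1/2\}} u^2\,dx \leq C(R^n\|G\|_\infty + R^{n-1}),
\]
and the analogous inequality $\int_{B(0,R)} W(u)\,dx \leq C(R^n\|G\|_\infty + R^{n-1})$ follows from \eqref{Htest2}. Provided $R$ exceeds a threshold $R^\ast=R^\ast(\alpha)$ and $\|G\|_\infty$ is at most a second threshold $C''(\alpha)$, the right-hand sides above are dominated by $\alpha|B(0,R)|/4$. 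Subtracting from the $L^2$ density produces $\int_{B(0,R)\cap\{|u|>1/2\}} u^2\,dx \geq \tfrac12\alpha|B(0,R)|$, and the trivial bound $u^2\leq 9/4$ on this set then gives $\mu_{1/2,R}\geq \tilde C R^n$.

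To upgrade from level $1/2$ to a general $\beta\in(0,1)$, the case $\beta\leq \tfrac12$ is immediate from $\mu_{\beta,R}\geq \mu_{1/2,R}$. For $\beta>\tfrac12$ we write $\mu_{\beta,R} = \mu_{1/2,R} - |\{1/2<|u|\leq\beta\}\cap B(0,R)|$; on the set $\{\tfrac12\leq |s|\leq \beta\}$ Assumption \ref{f} yields $W(s)\geq c_\beta>0$, so
\[
|\{1/2<|u|\leq \beta\}\cap B(0,R)| \leq c_\beta^{-1}\int_{B(0,R)} W(u)\,dx \leq c_\beta^{-1} C (R^n\|G\|_\infty + R^{n-1}),
\]
which is again negligible compared to $R^n$ once $R$ is sufficiently large and $\|G\|_\infty$ sufficiently small, yielding $\mu_{\beta,R}\geq \tfrac12\mu_{1/2,R}\geq CR^n$. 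For the small radii $R\in[1,R^\ast]$ one uses $\mu_{\beta,R}\geq\mu_{\beta,1}$ and absorbs the bound $\mu_{\beta,R}\geq\mu_{\beta,1}(R^\ast)^{-n}R^n$ into the final constant.

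The principal obstacle is the simultaneous tracking of the several thresholds: the $L^2$ density from Theorem \ref{p:densL2} only becomes useful at radii $R$ large enough that the annular increment $M_{R+1}-M_R\leq CR^{n-1}$ is dominated by $\alpha R^n$, while the energy estimate on the transition layer $\{1/2<|u|\leq\beta\}$ similarly needs both $R$ large and $\|G\|_\infty$ small; all of these thresholds depend on $\alpha$ (hence on $\beta$ and $\mu_{\beta,1}$), and the constants $C$ and $C'$ in the corollary must be chosen so that the estimate is uniform on the full interval $R\in[1,\min(\rho,C'\|G\|_\infty^{-1})]$.
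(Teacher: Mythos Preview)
Your argument is correct and follows the same overall strategy as the paper: apply Theorem~\ref{p:densL2} to obtain an $L^2$ density lower bound, then convert this into a level-set density bound using the energy inequalities from the proof of that theorem, and handle small radii by the trivial monotonicity $\mu_{\beta,R}\ge\mu_{\beta,1}$.

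The main difference is that you take an unnecessary detour through the level $\tfrac12$. You first prove $\mu_{1/2,R}\ge CR^n$ and then, for $\beta>\tfrac12$, upgrade to $\mu_{\beta,R}$ by bounding $|\{\tfrac12<|u|\le\beta\}\cap B(0,R)|$ via the lower bound $W\ge c_\beta$ on that range together with the energy estimate $\int_{B(0,R)}W(u)\,dx\le C(R^{n-1}+R^nG_0)$. The paper avoids this two-step procedure by observing that \eqref{HWL2} itself holds with $\tfrac12$ replaced by any $\beta\in(0,1)$: since $W(s)\ge C_\beta s^2$ for all $|s|\le\beta$ (the only zeros of $W$ being $0$ and $1$), the same derivation gives $\int_{B(0,R)\cap\{|u|\le\beta\}}u^2\,dx\le C(R^{n-1}+R^nG_0)$ directly. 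Splitting the $L^2$ integral at level $\beta$ rather than at $\tfrac12$ then yields $\tfrac94\mu_{\beta,R}\ge 2^{-n}\alpha|B(0,1)|R^n-CR^{n-1}$ in one stroke. Your route works, but the paper's is shorter and keeps the $\beta$-dependence of the constants more transparent.
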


\begin{proof}
  Throughout the proof, $C,C'$ denote positive constants depending
  only on $n$, $W$, $\| a \|_{W^{1,\infty}(B(0, \rho); \R^{n \times
      n})}$, $\lambda$, $\beta$ and $\mu_{\beta,1}$ that may change
  from line to line.

  Let $r_0\geq 1$  be as in Theorem \ref{p:densL2}. Then 
  \begin{align}
    \label{Cu2r0}
    \dashint_{B(0,r_0)} u^2 dx \geq \frac{\beta^2 \mu_{\beta, 1} }{|B(0,
      r_0)|} =: \alpha \in (0,r_0^{1-n}). 
  \end{align}
  Also, clearly $\dashint_{B(0,R)} u^2 dx\geq \alpha$ for every $1\leq
  R\leq r_0$.  Therefore, by \eqref{Cu2r0} and Theorem \ref{p:densL2}
  we have that \eqref{eq:densL2} holds for any $R \in [r_0, R_0]$
  integer, provided that $r_0 + 1 \leq R_0 < \rho$ is an integer that
  satisfies $R_0 G_0 \leq \alpha$, where $G_0$ is defined in
  \eqref{HG0}. Extending this estimate to the whole interval then
  yields
  \begin{align}
    \label{Cu2R}
    \int_{B(0, R)} u^2 dx \geq 2^{-n} \alpha|B(0,R)|\qquad \qquad \forall
    R \in [1, R_0].
  \end{align}
  Moreover, since $\| u \|_{L^\infty(B(0,\rho))} \leq \frac32$ by the
  assumptions in Theorem \ref{p:densL2}, we can write
  \begin{align}
    \label{nR}
    \int_{B(0, R)} u^2 dx \leq \int_{B(0,R) \cap \{ |u| \leq \beta \}}
    u^2 dx + \frac{9}{4}\mu_{\beta, R}.
  \end{align} 
  On the other hand, by \eqref{HWL2} (with $\frac12$ replaced by
  $\beta$) we have
  \begin{align}
    \label{Cgamu2}
    \int_{B(0,R) \cap \{ |u| \leq \beta \}} u^2 dx \leq C ( R^{n-1}
    +R^n G_0 ) \qquad \forall R \in [1, \rho].
  \end{align}
  Therefore, by \eqref{Cu2R}, \eqref{nR} and \eqref{Cgamu2}, and
  recalling that $R_0 G_0 \leq\alpha < 1$, we obtain for all $R\in [1,
  R_0]$,
  \begin{align}
    \label{Cumu2}
    \frac{9}{4} \mu_{\beta,R}\geq 2^{-n} \alpha |B(0, 1)| R^n- C
    R^{n-1}.
  \end{align}
  From this we conclude that $\mu_{\beta,R} \geq C R^n$ whenever $R>
  C'$. At the same time, by definition $\mu_{\beta,R} \geq
  \mu_{\beta,1} \geq \mu_{\beta,1} (C')^{-n} R^n$ whenever $R \leq
  C'$. This concludes the proof.
 \end{proof}


\end{document}